%% file: main.tex
\documentclass[11pt,leqno]{article}
\usepackage[T1]{fontenc}
\usepackage{authblk}
\usepackage[margin=1.25in]{geometry}
\usepackage{amsmath,amssymb,amsthm,mathtools,booktabs,longtable,array}
\usepackage{dsfont,cite,microtype,hyperref}
\hypersetup{colorlinks,breaklinks,linkcolor=blue,citecolor=blue,urlcolor=blue,
 pdftitle={The finite-horizon five-expert prediction problem},
 pdfauthor={Jeff Calder and Nadejda Drenska}}

\newcommand{\R}{\mathbb{R}}
\newcommand{\Z}{\mathbb{Z}}
\renewcommand{\v}{\mathbf{v}}
\newcommand{\one}{\mathds{1}}
\newcommand{\E}{{\mathbb E}}
\renewcommand{\O}{{\mathcal O}}
\renewcommand{\phi}{\varphi}
\DeclareMathOperator{\csch}{csch}
\DeclareMathOperator{\sech}{sech}
\DeclareMathOperator{\erfc}{erfc}
\DeclareMathOperator{\Dawson}{Dawson}
\newcommand{\dd}{\,dt}
\newcommand{\Iop}{\mathcal I}
\newcommand{\Kop}{\mathcal K}
\newcommand{\Lap}{\mathcal L}
\newcommand{\Rop}{\mathcal R}
\newcommand{\Top}{\mathcal T}
\newcommand{\Aop}{\mathcal A}
\newcommand{\coll}{\mathcal C}
\newcommand{\Gk}{\mathcal G}  %
\newcommand{\Ek}{\mathcal E}  %
\newcommand{\Hk}{\mathcal H}  %
\newcommand{\Lam}{\Lambda}
\newcommand{\gen}[1]{\mathsf{#1}}
\newcommand{\CM}{\mathrm{CM}}

\newtheorem{theorem}{Theorem}
\newtheorem{proposition}[theorem]{Proposition}
\newtheorem{lemma}[theorem]{Lemma}
\newtheorem{corollary}[theorem]{Corollary}
\theoremstyle{definition}
\newtheorem{remark}[theorem]{Remark}

\numberwithin{equation}{section}
\numberwithin{theorem}{section}

\title{The finite-horizon five-expert prediction problem\thanks{{\bf Funding:} JC acknowledges funding from an Albert and Dorothy Marden Professorship, a Simons Fellowship, and National Science Foundation Grant DMS:2436333. ND acknowledges funding from National Science Foundation Grant DMS:2407839. {\bf Source Code:} The certificate scripts, their exact rational data and the verification notes are archived at \url{https://doi.org/10.5281/zenodo.23012438}. {\bf Formalization:} The Lean~4 formalization of the proofs of Theorems~\ref{thm:main} and~\ref{thm:comb}, the interval certificates included, is part of the same archive. The proof development and the preparation of this paper were assisted by ChatGPT and Claude. The authors take full responsibility for the contents of the paper.}}

\author[1]{Jeff Calder}
\affil[1]{School of Mathematics, University of Minnesota}
\author[2]{Nadejda Drenska}
\affil[2]{Department of Mathematics, Louisiana State University}

\begin{document}
\maketitle
\begin{abstract}
We give an explicit solution to the five expert prediction with expert advice partial differential equation (PDE) in the finite-time horizon setting. The solution formula establishes that the adversary's rank strategy $(1,0,1,0,0)$ is globally optimal, and the COMB strategy $(1,0,1,0,1)$ is optimal exactly on the set where $x_1=x_2$ and $x_3=x_4$. The formula is derived from the solution of the geometric-stopping problem given in our companion paper through the transform principle of Bayraktar, Ekren and Zhang, which links the two problems by a Laplace transform. Inverting the transform term by term expresses the solution through a series of Gaussian and complementary error function kernels. The optimality of $(1,0,1,0,0)$ is reduced to the signs of $41$ one-variable Gaussian series, which are certified with computer assistance by Poisson summation, first-mode domination and interval arithmetic on $1616$ rational cells. The proofs of our main theorems, certificates included, are also formalized in the Lean proof assistant.
\end{abstract}
\tableofcontents
\input{introduction}
\input{main_results}
\input{derivation}
\input{verification}

\appendix
\input{appendix}

\bibliographystyle{abbrv}
\bibliography{main}

\end{document}

%% file: introduction.tex
\section{Introduction}\label{sec:introduction}

In finite-horizon prediction with expert advice, a player and an adversary play a known number of rounds $M$. In each round the player picks, possibly at random, one of $n$ experts to follow, the adversary decides which experts are correct, and at the end of the game the player's performance is measured by their regret---the excess of the player's cumulative loss over that of the best expert. The game dates back to Cover~\cite{Cover65}; for an extensive literature review on regret bounds we refer to~\cite{CBL06}, and for the history of the exact minimax problem we refer to the introduction of our companion paper~\cite{CD26}. Drenska and Kohn~\cite{Drenska17,DK20} showed that if $V^M(x)$ is the value of the $M$-round game with initial regrets $x$, then $V^M(\sqrt M\,x)/\sqrt M\to U(1,x)$ locally uniformly as $M\to \infty$, where $U$ is the viscosity solution of the nonlinear parabolic partial differential equation (PDE) 
\begin{equation}\label{eq:parabolic-intro}
 U_\tau=\tfrac12\max_{\v\in\{0,1\}^n}\v^T\nabla^2U\,\v,\qquad U(0,x)=\max_{1\leq i\leq n}x_i,
\end{equation}
on $(0,\infty)\times\R^n$; in particular $V^M(0)=U(1,0)\sqrt M+o(\sqrt M)$. Here $\tau$ is the time remaining and $x_i$ the player's regret with respect to expert $i$. The gradient $\nabla U$ gives the player's asymptotically optimal mixed strategy, and the maximizing controls $\v$ give the adversary's.

Explicit solutions of \eqref{eq:parabolic-intro} allow for the exact characterization of optimal strategies, but only the cases of $n\leq 4$ have so far been addressed.  For $n\leq3$ the solution is elementary, and Abbasi-Yadkori, Bartlett and Gabillon~\cite{ABG17} constructed near-minimax strategies for the three-expert game. For $n=4$, Bayraktar, Ekren and Zhang~\cite{BEZ20b} derived the solution of the $n=4$ finite-horizon game from their solution~\cite{BEZ20} of the geometric-horizon version of the problem via the inversion of a Laplace transform. In both settings COMB, which ranks the experts by regret and makes either the odd-ranked or the even-ranked ones correct, with probability $\tfrac12$ each, is optimal for four experts (as is the strategy that makes the two middle-ranked experts correct) and Gravin, Peres and Sivan~\cite{GPS16} conjectured that COMB stays asymptotically optimal for every $n$. Numerical evidence~\cite{Chase19,CDM26} pointed the other way for $n\geq 5$, and in~\cite{CD26} the authors gave an explicit solution to the five-expert geometric-horizon problem: 
the non-COMB rank direction $\v_*=(1,0,1,0,0)$ attains the maximum everywhere in the ordered sector $x_1\geq\cdots\geq x_5$, while COMB is optimal only where $x_1=x_2$ and $x_3=x_4$. The same formula for geometric stopping was derived independently by Bayraktar, Ekren and Kolliopoulos~\cite{BEK26} in a preprint that appears shortly after \cite{CD26}. 

In this paper we solve the five-expert finite time horizon problem. Theorem~\ref{thm:main} gives an explicit function $U$ that is a classical solution of \eqref{eq:parabolic-intro} with $n=5$ for which $\v_*$ attains the maximum at every point. In particular $V^M(0)/\sqrt M\to U(1,0)=45\pi^{3/2}/(256\sqrt2)$ as $M\to\infty$, which is $2/\sqrt\pi$ times the geometric-horizon constant $45\pi^2/(512\sqrt2)$ of~\cite{CD26}. This is the ratio conjectured by Gravin, Peres and Sivan~\cite[\S1.2]{GPS16}, and Bayraktar, Ekren and Zhang~\cite{BEZ20b} observed that it holds whenever one strategy is optimal for both problems. Theorem~\ref{thm:comb} shows that COMB attains the maximum exactly on the set $\{x_1=x_2,\ x_3=x_4\}$ found in the geometric problem, so COMB is not globally optimal for five experts at any horizon.

The construction uses the transform principle of Bayraktar, Ekren and Zhang~\cite{BEZ20b}, stated here as Proposition~\ref{prop:transform}: if one control $\v$ attains the maximum everywhere for the geometric-horizon solution $u$, then $\lambda^{-3/2}u(\sqrt\lambda\,x)$ solves the equation obtained by Laplace transforming  the finite-horizon problem in $\tau$ with the adversary frozen at that control, and its inverse transform is a natural candidate for $U$. For four experts they inverted this transform along the branch cut of $\sqrt\lambda$, which produces an oscillatory integral. We invert it term by term instead. We expand the formulas of~\cite{CD26} in exponentials of linear functions of $x$ with polynomial prefactors, and each such term inverts to an explicit kernel built from a Gaussian and the complementary error function. The result is a series with rational coefficients; for four experts the same procedure recovers the solution of~\cite{BEZ20b} as such a series (Proposition~\ref{prop:four}).

Most of the paper is devoted to proving that $U$ solves \eqref{eq:parabolic-intro}. We obtain its regularity where experts tie, including the collision set on which the series degenerate, by continuing the stationary formulas to complex scaling parameters and inverting the transform on a Hankel contour. By the transform principle, the inequalities expressing that $\v_*$ attains the maximum are equivalent to the complete monotonicity in $\lambda$ of finitely many curvature gaps of the stationary solution, which we reduce to the signs of $41$ explicit one-variable Gaussian series. These signs are proved with the assistance of a computer, on the whole positive axis rather than at sample points: by Poisson summation for small arguments, by first-mode domination for large ones, and in between by outward-rounded interval arithmetic over $1616$ rational cells, with scripts and data in the supplement~\cite{CD26Supp}. The proofs of Theorems~\ref{thm:main} and~\ref{thm:comb}, these certificates included, have also been formalized in the Lean proof assistant with the Mathlib library~\cite{CD26Supp}, and Table~\ref{tab:lean} lists the Lean theorem behind each claim; the $\sqrt M$ asymptotics are the cited theorem of Drenska and Kohn and are not part of the formalization. %

%% file: main_results.tex
\section{Main results}\label{sec:main}

We consider the finite-horizon prediction with expert advice PDE \eqref{eq:parabolic-intro} with five experts, which in the time to maturity $\tau=T-t$ reads
\begin{equation}\label{eq:forward}
 U_\tau=\tfrac12\max_{\v\in\{0,1\}^5}\v^T\nabla^2U\,\v,
 \qquad U(0,x)=\phi(x):=\max_{1\leq i\leq 5}x_i,
\end{equation}
on $(0,\infty)\times\R^5$. The stationary 5-expert problem solved in the companion paper~\cite{CD26} is
\begin{equation}\label{eq:stationary}
 u(x)-\tfrac12\max_{\v\in\{0,1\}^5}\v^T\nabla^2u(x)\,\v=\phi(x),\qquad x\in\R^5,
\end{equation}
whose solution is the value of the geometric-horizon game. Throughout we write $D^2_\v w=\v^T\nabla^2w\,\v$ for the second derivative of a function $w$ in the direction $\v$, and we work in the ordered sector $\{x_1\geq x_2\geq x_3\geq x_4\geq x_5\}$, with $k=\sqrt2$.

We use the scaled gaps $y_i=k(x_i-x_{i+1})\geq0$ and the coordinates of the companion paper,
\begin{equation}\label{eq:coords}
\begin{aligned}
 z_1&=\frac{x_1+x_2-x_3-x_4}{k},\ \ z_2=\frac{x_1-x_2+x_3-x_4}{k},\\
 z_3&=\frac{x_1-x_2-x_3+x_4}{k},\ \ z_4=y_4-\max(z_3,0),\\
 a_1&=\frac{y_1-y_3-2y_4}3,\ \  a_2=a_1+y_4,\ \ a_3=a_2+y_3,\ \  a_4=a_3+y_2 .
\end{aligned}
\end{equation}
Note that the ordered sector in $x$ transforms to the set $\{z_1\geq z_2\geq|z_3|\}$, and the fifth expert enters only through $z_4$. As in the companion paper, the sector splits into three regions:
\begin{center}
\begin{tabular}{lll}\toprule
Region & Scaled gaps & Coordinates\\\midrule
I & $y_1\leq y_3$ & $z_3\leq0\leq z_4$\\
II & $y_3\leq y_1\leq y_3+2y_4$ & $z_3\geq0$, $z_4\geq0$\\
III & $y_1\geq y_3+2y_4$ & $z_3\geq0$, $z_4\leq0$\\\bottomrule
\end{tabular}
\end{center}
In Region III one has $0\leq a_1\leq a_2\leq a_3\leq a_4$ and $y_1=a_1+a_2+a_3$. Where $z_3\geq0$ one has $a_1=-\tfrac23z_4$, so Region III is $\{a_1\geq0\}$, its interface $z_4=0$ with Region II is $\{a_1=0\}$, and on that interface $(a_2,a_3,a_4)=(z_3,z_2,z_1)$.

A rank strategy acts on the coordinates in decreasing order, so a function defined on the ordered sector is extended to $\R^5$ by sorting the coordinates. The stationary solution $u$ and the function $U$ constructed below are both $x_1$ plus a function of the differences of the coordinates. Their Hessians therefore annihilate $\one$, so a control $\v$ and its complement $\one-\v$ give the same second derivatives, and we identify complementary controls throughout. We write $\v_*=(1,0,1,0,0)$ for the rank direction that attains the Hamiltonian maximum of \eqref{eq:stationary} throughout the sector~\cite{CD26}, and we write $\coll$ for the set of states at which the four largest coordinates coincide; in the ordered sector $\coll=\{x_1=x_2=x_3=x_4\}=\{z_1=0\}$, which contains the diagonal $\{a_4=0\}$. The mode series below degenerate exactly on $\coll$, and the regularity of $U$ there is proved separately in Section~\ref{sec:collision}.

The formula for $U$ is written in terms of kernels $\Hk_j:\R \times (0,\infty) \to \R$, $j\geq 0$, defined as follows. With
\begin{equation}\label{eq:gE}
 \Gk(X,\tau)=\frac{1}{\sqrt{\pi\tau}}e^{-X^2/(4\tau)},\qquad
 \Ek(X,\tau)=\erfc\bigl(X/(2\sqrt\tau)\bigr),
\end{equation}
the first three kernels are
\begin{equation}\label{eq:H012}
 \Hk_0=2\tau\,\Gk-X\,\Ek,\qquad \Hk_1=\Ek,\qquad \Hk_2=\Gk,
\end{equation}
and the others are defined by the recursion
\begin{equation}\label{eq:Hrec}
 \Hk_{j+1}=\frac X{2\tau}\Hk_j-\frac{j-2}{2\tau}\Hk_{j-1},\qquad j\geq2.
\end{equation}
Every $\Hk_j$ is an elementary combination of a Gaussian and the complementary error function. We also note that the kernels can be expressed as inverse Laplace transforms: for every $\tau>0$,
\begin{equation}\label{eq:Hdef}
 \Hk_j(X,\tau)=\Lap^{-1}\!\left[\lambda^{(j-3)/2}e^{-X\sqrt\lambda}\right](\tau)
 \qquad\text{when $X>0$, or $X=0$ and $j\leq2$,}
\end{equation}
where $\Lap$ denotes the Laplace transform in $\tau$ and $\lambda$ the Laplace variable. At $X=0$ the identity fails for $j\geq3$. For $j=3$ its right side $\Lap^{-1}[1]$ is not a function while $\Hk_3(0,\tau)=0$. For $j=4$ the kernel $\Hk_4(0,\tau)=-1/(2\sqrt\pi\,\tau^{3/2})$ is not integrable at $\tau=0$. The formula below uses kernels of order $j\geq3$ only where their argument $X$ is positive. The transform identity, together with the differentiation rules and the bounds on the $\Hk_j$ that we use, is proved in Lemma~\ref{lem:kernels}.

The formula for $U$ is derived from the stationary solution $u=x_1+F/k$ of \eqref{eq:stationary}. In each region we expand $F$ in a series of terms $P(x)\,e^{-X(x)}$, which we call modes, with $X$ linear and $P$ a homogeneous polynomial, and replace each mode by $P(x)\,\Hk_j(X(x),\tau)$, where $j$ is the degree of $P$ (Corollary~\ref{cor:recipe}). We now give the coefficients and exponents of these expansions.

In Region III the formula is built from the trace $e$ of the companion paper, defined by the quadrature
\begin{equation}\label{eq:equad}
 e(X)=3\cosh X\int_X^\infty I(t)\,\sech^2t\,\csch^5t\dd,
 \qquad
 I(X)=\int_0^X\sinh^4s\cosh^2s\,ds.
\end{equation}
The integrand of $e$ is regular at $t=0$ since $I(t)=\frac15 t^5+\O(t^7)$, and decays like $\tfrac13e^{-t}$ at infinity; hence $e$ is positive and bounded. We also need the function $\Lam(L)=\log\coth(L/2)$.\footnote{The companion paper writes $\lambda$ for $\Lam$, but in this paper $\lambda$ is the Laplace variable.} Set
\begin{equation}\label{eq:AB}
 A_0=\tfrac12,\ \
 A_m=-\frac{12m^6-25m^4-m^2-4}{4(4m^2-1)^2}\ \ (m\geq1),
 \ \
 B_m=\frac{m(m^2-1)(m^2-4)}{2(4m^2-1)},
\end{equation}
\begin{equation}\label{eq:JK}
 J_0=K_0=1,\qquad J_m=\frac{-2}{4m^2-1},\qquad K_m=\frac{4m}{4m^2-1}\quad(m\geq1),
\end{equation}
and, with $\delta_{m0}$ the Kronecker delta,
\begin{equation}\label{eq:alphabeta}
\begin{aligned}
 \alpha^{(0)}_m&=A_m, &\ \beta^{(0)}_m&=B_m,\\
 \alpha^{(1)}_m&=\tfrac16\bigl(B_m-2mA_m-3\delta_{m0}\bigr), &\ \beta^{(1)}_m&=-\tfrac13mB_m,\\
 \alpha^{(2)}_m&=\tfrac1{21}\bigl((2m^2+3)A_m-2mB_m+9J_m\bigr), &\ \beta^{(2)}_m&=\tfrac1{21}(2m^2+3)B_m,\\
 \alpha^{(3)}_m&=\tfrac1{84}\bigl(-2m(m^2+5)A_m+(3m^2+5)B_m\bigr)\\
 &\quad+\tfrac1{14}\bigl(6K_m-13\delta_{m0}\bigr),
  &\ \beta^{(3)}_m&=-\tfrac1{42}m(m^2+5)B_m .
\end{aligned}
\end{equation}
Proposition~\ref{prop:trace} shows that $A_m$ and $B_m$ are the coefficients of the exponential expansion
\[
 e(L)=\sum_{m\geq0}(A_m+B_mL)e^{-2mL}
\]
of the trace, $J_m$ and $K_m$ are those of $\Lam(L)\sinh L$ and $\Lam(L)\cosh L$, and $\alpha^{(j)}_m,\beta^{(j)}_m$ are those of the four coefficient functions of the Region III stationary formula from \cite{CD26}. For $\epsilon\in\{\pm1\}^3$ let $s_j(\epsilon)$ denote the $j$-th elementary symmetric polynomial of $\epsilon_1,\epsilon_2,\epsilon_3$,\footnote{That is, $s_0(\epsilon)=1$, $s_1(\epsilon)=\epsilon_1+\epsilon_2+\epsilon_3$, $s_2(\epsilon)=\epsilon_1\epsilon_2+\epsilon_1\epsilon_3+\epsilon_2\epsilon_3$ and $s_3(\epsilon)=\epsilon_1\epsilon_2\epsilon_3$.} and define
\begin{equation}\label{eq:cab}
 c^a_{m,\epsilon}=\sum_{j=0}^3\alpha^{(j)}_m\,s_j(\epsilon),
 \ \
 c^b_{m,\epsilon}=\sum_{j=0}^3\beta^{(j)}_m\,s_j(\epsilon),
 \ \
 X_{m,\epsilon}=2m\,a_4-\epsilon_1a_1-\epsilon_2a_2-\epsilon_3a_3 .
\end{equation}
They are the coefficients and exponents of the mode expansion of the stationary solution in Region III, derived in the proof of Proposition~\ref{prop:region3}: for $a_4>0$,
\[
 F=\frac18\sum_{m\geq0}\sum_{\epsilon\in\{\pm1\}^3}\bigl(c^a_{m,\epsilon}+c^b_{m,\epsilon}\,a_4\bigr)e^{-X_{m,\epsilon}} .
\]

In Regions I and II the formula has a four-expert background and a correction. Write $\theta(L)=\arctan(e^{-L})$. The stationary four-expert solution of Bayraktar, Ekren and Zhang~\cite[Theorem~3.1]{BEZ20} is, in the present notation, $u_4=x_1+\frac1kF_4$ with
\begin{equation}\label{eq:F4}
 F_4=\theta(z_1)\cosh z_1\cosh z_2\cosh z_3
   +\tfrac12\Lam(z_1)\sinh z_1\sinh z_2\sinh z_3-\tfrac12\sinh(z_2+z_3),
\end{equation}
and Lemma~\ref{lem:F4expansion} gives its mode expansion
\[
 F_4=\sum_{m\geq0}\ \sum_{\substack{\epsilon\in\{\pm1\}^3\\ \epsilon_1\epsilon_2\epsilon_3=(-1)^m}}
      c^{(4)}_{m,\epsilon}\,e^{-X^{(4)}_{m,\epsilon}},
 \qquad z_1>0,
\]
where
\begin{equation}\label{eq:c4D4}
\begin{gathered}
 c^{(4)}_{m,\epsilon}=\frac{(-1)^m}{4(2m+1)}
 \quad\text{for }\epsilon\in\{\pm1\}^3\text{ with }\epsilon_1\epsilon_2\epsilon_3=(-1)^m,\\
 X^{(4)}_{m,\epsilon}=(2m+1-\epsilon_1)z_1-\epsilon_2z_2-\epsilon_3z_3,
\end{gathered}
\end{equation}
with the two exceptions $c^{(4)}_{0,(1,1,1)}=0$ and $c^{(4)}_{0,(1,-1,-1)}=\tfrac12$. The fifth expert enters through the density
\begin{equation}\label{eq:pdef}
 p(X)=\tfrac12\tanh X-3\coth X+\frac{15\,I(X)}{\sinh^6X},
\end{equation}
which is positive. For integers $j\geq0$ define the rationals $r_{j,n}$ and $s_{j,n}$, $n\geq0$, as the coefficients of the power series, convergent for $|w|<1$,
\begin{equation}\label{eq:rs}
\begin{gathered}
 \sum_{n\geq0}r_{j,n}w^n=\frac{w^{j+2}\,(1-14w-94w^2-14w^3+w^4)}{2(1+w)(1-w)^{j+7}},\\
 \sum_{n\geq0}s_{j,n}w^n=\frac{60\,w^{j+4}}{(1-w)^{j+8}}.
\end{gathered}
\end{equation}
Finally, for $\epsilon_2,\epsilon_3\in\{\pm1\}$ and $n\geq2$ put
\begin{equation}\label{eq:cD12}
 c=c(n,\epsilon_2,\epsilon_3)=2n-\epsilon_2-\epsilon_3,
 \qquad
 X_{j,n,\epsilon}=c\,z_1+\epsilon_2z_2+\epsilon_3|z_3|+2z_4,
\end{equation}
and
\begin{equation}\label{eq:ab12}
 \alpha_{j,n}(c)=\frac{2r_{j,n}}{c^2-1}+\frac{4c\,s_{j,n}}{(c^2-1)^2},
 \qquad
 \beta_{j,n}(c)=\frac{2s_{j,n}}{c^2-1}.
\end{equation}
Since $c\geq2$, the denominators in \eqref{eq:ab12} do not vanish. The mode expansion of the correction, derived in Lemma~\ref{lem:dF}, has the exponents $X_{j,n,\epsilon}$, and its coefficients are built from $\alpha_{j,n}(c)$ and $\beta_{j,n}(c)$. We are now ready to state our main result.

\begin{theorem}\label{thm:main}
Define $U$ on the ordered sector $\{x_1\geq\cdots\geq x_5\}$ by
\begin{equation}\label{eq:U3}
 U(\tau,x)=x_1+\frac1{8k}\sum_{m\geq0}\sum_{\epsilon\in\{\pm1\}^3}
 \Bigl[c^a_{m,\epsilon}\,\Hk_0\bigl(X_{m,\epsilon},\tau\bigr)
      +c^b_{m,\epsilon}\,a_4\,\Hk_1\bigl(X_{m,\epsilon},\tau\bigr)\Bigr]
\end{equation}
in Region III outside $\coll$, by
\begin{equation}\label{eq:U12}
\begin{aligned}
 U(\tau,x)=x_1
 &+\frac1k\sum_{m\geq0}\ \sum_{\substack{\epsilon\in\{\pm1\}^3\\ \epsilon_1\epsilon_2\epsilon_3=(-1)^m}}
      c^{(4)}_{m,\epsilon}\,\Hk_0\bigl(X^{(4)}_{m,\epsilon},\tau\bigr)\\
 &+\frac1{2k}\sum_{\epsilon_2,\epsilon_3\in\{\pm1\}}\epsilon_2\epsilon_3
   \sum_{j\geq0}\frac{(-4)^jz_4^{\,j}}{j!}\sum_{n\geq j+2}
   \Bigl[\alpha_{j,n}(c)\,\Hk_j(X_{j,n,\epsilon},\tau)\\
 &\hspace{5cm}
        +\beta_{j,n}(c)\,z_1\Hk_{j+1}(X_{j,n,\epsilon},\tau)\Bigr]
\end{aligned}
\end{equation}
in Regions I and II outside $\coll$, and by
\begin{equation}\label{eq:face}
 U(\tau,x)=x_1+\frac{\sqrt{\pi\tau}}{2\sqrt2}
   +\frac1k\int_0^\infty\sinh t\,p(t)\,\Hk_0\bigl(2z_4\coth t,\tau\bigr)\dd
\end{equation}
on $\coll$, and extend $U$ to $\R^5$ by sorting the coordinates. Then the series converge absolutely, locally uniformly on the sets where they are used, together with the series of their derivatives of every order, each series being differentiated within its own region, where its coordinates are linear in $x$; and the following hold.
\begin{enumerate}
\item[(i)] $U$ is a classical solution of \eqref{eq:forward} on $(0,\infty)\times\R^5$: for every $\tau>0$, $U(\tau,\cdot)\in C^2(\R^5)$, the derivatives $U_\tau$, $\nabla U$ and $\nabla^2U$ are jointly continuous in $(\tau,x)$, and the equation holds at every point. Hence $U$ is a viscosity solution, and $|U(\tau,x)-\phi(x)|\leq C\sqrt\tau$ uniformly in $x$.
\item[(ii)] For every $\tau>0$ and at every point $x$ of the ordered sector, the rank direction $\v_*=(1,0,1,0,0)$ attains the Hamiltonian maximum,
\[
 D^2_{\v_*}U(\tau,x)=2U_\tau(\tau,x)\geq D^2_\v U(\tau,x)\qquad\text{for all }\v\in\{0,1\}^5 .
\]
\item[(iii)] $U$ has the parabolic scaling $U(\ell^2\tau,\ell x)=\ell\,U(\tau,x)$ for $\ell>0$, its Laplace transform in $\tau$ is $\lambda^{-3/2}u(\sqrt\lambda\,x)$, where $u$ is the stationary solution of \eqref{eq:stationary}, and at the origin
\begin{equation}\label{eq:constant}
 U(\tau,0)=\frac{45\pi^{3/2}}{256\sqrt2}\sqrt\tau,
 \qquad
 \nabla^2U(\tau,0)=\frac{75\pi^{3/2}}{512\sqrt{2\tau}}\Bigl(I-\tfrac15\one\one^T\Bigr).
\end{equation}
\end{enumerate}
\end{theorem}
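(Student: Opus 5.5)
The plan is to construct $U$ as the inverse Laplace transform of $\lambda^{-3/2}u(\sqrt\lambda\,x)$ and read off each claim of the theorem from the stationary solution $u$ of \cite{CD26} via the transform principle (Proposition~\ref{prop:transform}). First I would establish the kernel facts (Lemma~\ref{lem:kernels}): the transform identity \eqref{eq:Hdef}, the differentiation rules $\partial_X\Hk_j=-\Hk_{j+1}$ and $\partial_\tau\Hk_j=\Hk_{j+2}$, and Gaussian bounds of the form $|\Hk_j(X,\tau)|\leq C_j\tau^{-(j-2)/2}e^{-X^2/(8\tau)}$ for $X>0$. Then, region by region, I would take the mode expansions of $F$ (Proposition~\ref{prop:trace} and Proposition~\ref{prop:region3} in Region III, Lemma~\ref{lem:F4expansion} and Lemma~\ref{lem:dF} in Regions I and II). Writing $\lambda^{-3/2}\cdot P(\sqrt\lambda x)e^{-\sqrt\lambda X}=P(x)\lambda^{(j-3)/2}e^{-\sqrt\lambda X}$ with $j=\deg P$, Corollary~\ref{cor:recipe} inverts each mode termwise to $P(x)\Hk_j(X,\tau)$, and this gives \eqref{eq:U3} and \eqref{eq:U12}. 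The coefficients grow only polynomially in $m,n$ (and like $4^j/j!$ in $j$), while the exponents $X$ grow linearly in $m,n$ whenever $z_1>0$, i.e.\ off $\coll$. The Gaussian bounds therefore give absolute, locally uniform convergence of every derivative series, and this justifies interchanging the sum with both the inverse transform and differentiation.

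For (i), the PDE holds region by region. In each region, freezing the control at $\v_*$ gives a linear equation, and its Laplace transform is exactly the stationary equation for $u$ along $\v_*$; since each mode series is linear in the kernels and commutes with the transform, $U_\tau=\tfrac12D^2_{\v_*}U$ holds termwise. Gluing across the interfaces $z_3=0$ and $z_4=0$ and across ties $x_i=x_{i+1}$ requires $C^2$ matching. Since $u\in C^2$ with matching traces, and the Laplace transform is injective on continuous functions of moderate growth, the derivatives of $U$ up to order two agree across the interfaces (their transforms agree). The collision set $\coll$ is the delicate part. There I would continue $u(\sqrt\lambda\,x)$ to complex $\lambda$ in a sector $|\arg\lambda|<\pi-\delta$, bound it by $C|\lambda|^{-3/2}(1+|\sqrt\lambda x|)$ uniformly near $z_1=0$, and invert on a Hankel contour. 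Dominated convergence on the contour then gives joint continuity of $U,\nabla U,\nabla^2U,U_\tau$ up to $\coll$. Evaluating the contour integral at $z_1=0$ should produce \eqref{eq:face}, via the known face formula for $u$ and $\Lap^{-1}[\lambda^{-3/2}]=2\sqrt{\tau/\pi}$. The bound $|U-\phi|\leq C\sqrt\tau$ follows from the scaling together with the boundedness of $F$. Because $U$ is classical, it is also a viscosity solution.

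For (ii), the key observation is that for each competing $\v$ the gap $D^2_{\v_*}U-D^2_\v U$ has Laplace transform $\lambda^{-1/2}\,(D^2_{\v_*}u-D^2_\v u)(\sqrt\lambda x)$. By Bernstein's theorem, nonnegativity for all $\tau$ follows if this transform is completely monotone in $\lambda$. I would expand each curvature gap in modes and reduce complete monotonicity, after grouping, to the positivity of finitely many one-variable Gaussian series, namely the $41$ series mentioned in the introduction. These I would certify with the computer-assisted scheme: Poisson summation for small argument, dominance of the first mode for large argument, and interval arithmetic on rational cells in between.

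For (iii), scaling is immediate from the form of the kernels. At the origin, \eqref{eq:face} gives $U(\tau,0)=\sqrt{\pi\tau}/(2\sqrt2)+\frac1k\int_0^\infty\sinh t\,p(t)\,dt\cdot 2\sqrt{\tau/\pi}$, and I would evaluate the integral in closed form to get $45\pi^{3/2}/(256\sqrt2)$. Alternatively I would simply invert $\lambda^{-3/2}u(0)$, using $u(0)=45\pi^2/(512\sqrt2)$, and likewise obtain the Hessian from $\lambda^{-1/2}\nabla^2u(0)$.

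The main obstacle is (ii): turning the curvature-gap inequalities into finitely many certifiable one-variable positivity statements, and certifying them on all of $(0,\infty)$. Regularity at $\coll$ is the secondary difficulty.
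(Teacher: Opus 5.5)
Your outline follows the paper's architecture: termwise inversion by Corollary~\ref{cor:recipe}, the frozen linear equation mode by mode, complete monotonicity of the transformed gaps \eqref{eq:Fv}, and a Hankel contour at $\coll$. However, the step that actually carries (ii) is missing. ``Expand each curvature gap in modes and reduce complete monotonicity, after grouping, to finitely many one-variable Gaussian series'' is not a mechanism. At a general state the gap depends on four continuous parameters, and its mode coefficients have mixed signs (Remark~\ref{rem:positivity}). Membership in $\CM$ at every state is therefore an infinite family of conditions, and no regrouping of modes turns it into finitely many scalar checks. Invoking ``the $41$ series'' presupposes exactly the reduction you need to supply. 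In the paper that reduction rests on ideas absent from your plan:
\begin{itemize}
\item Because $\partial_{a_i}^2G_\v=G_\v$ \eqref{eq:GvODE}, the scaled gap solves $\partial_{b_ib_i}g=\lambda g$ in each of $a_1,a_2,a_3$. Two-point interpolation (Lemma~\ref{lem:exit}) then writes it exactly as a combination of its corner values with the weights $W_0,W_1$ of \eqref{eq:W}. These weights are Brownian exit-time transforms and hence lie in $\CM$, and the corner values are functions of $a_4$ alone.
\item In Regions I and II the analogous square interpolation in $(z_2,|z_3|)$ has a nonphysical corner. It is handled by the symmetry \eqref{eq:nonphysical} together with the Dirichlet Green kernel \eqref{eq:green}.
\item The surviving two-variable families in $(z_1,z_4)$ are pushed back to one-variable data at $z_4=0$. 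This uses the positivity-preserving evolutions of Lemma~\ref{lem:positiveflow}, the identity of Proposition~\ref{prop:Jm}, and the triangular cascades of Proposition~\ref{prop:cascade}.
\end{itemize}
Only after this are the objects to be signed one-variable lattice series. You must also pass from the regions where the series converge to $\coll$ by continuity of the Hessian.

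Three analytic steps are also under-justified.

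\emph{The $\sqrt\tau$ bound.} $|U-\phi|\leq C\sqrt\tau$ does not follow from scaling and boundedness of $F$. After scaling you need $\sup_y|U(1,y)-y_1|<\infty$. A bound $C\lambda^{-3/2}$ on a Laplace transform for real $\lambda$ gives no pointwise bound on its inverse, and the series majorants blow up as $a_4\to0$. The paper uses the uniform bound \eqref{eq:affine} for the continuation $f_s$ on the sector $|\arg s|\leq\pi/3$, together with a contour through $|\lambda|=1/\tau$. Alternatively, once (ii) is known, $2U_\tau\geq D^2_{\mathbf 0}U=0$ makes $g=U(\cdot,y)-y_1$ nondecreasing, with $g(0^+)=\lim\lambda\widehat g(\lambda)=0$. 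Then $\widehat g(1/\tau)\geq\tau g(\tau)/e$ gives the bound. Either route must be stated.

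\emph{Regularity at $\coll$.} Your injectivity argument for matching across interfaces away from $\coll$ is workable, once the differentiated series are shown to be transformable term by term. At $\coll$, however, a bound on $\lambda^{-3/2}u(\sqrt\lambda\,x)$ says nothing about $\nabla^2U$. Dominated convergence on the contour needs the continued function to be $C^2$ in the real variable $x$ across $\coll$, the walls and the interfaces, for complex $\lambda$, with Hessian bounds.
\begin{itemize}
\item The paper gets this from the holomorphy in $s$ of all one-sided jets and the identity theorem, since the jets agree for real $s$ because $u$ is $C^2$. A gluing lemma and moving-endpoint estimates for the quadrature complete the argument.
\item For uniform control the sector must be narrow enough that $e^{-2z_4\sqrt\lambda\coth(\sqrt\lambda t)}$ stays bounded; this is Lemma~\ref{lem:sector}. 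Near the edge of your sector $|\arg\lambda|<\pi-\delta$, the real part of $\sqrt\lambda\coth(\sqrt\lambda t)$ turns negative.
\item Your bound $C|\lambda|^{-3/2}(1+|\sqrt\lambda\,x|)$ is not integrable at the vertex of the contour. The paper subtracts $u_0+\sqrt\lambda\,\bar x$, as in \eqref{eq:Rdef}.
\end{itemize}

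\emph{Term-by-term transforms.} Locally uniform convergence on compact $\tau$-sets does not license taking the Laplace transform term by term over $(0,\infty)$. In \eqref{eq:U12} the kernel order $j$ is unbounded, so $C_j$ must be tracked: it grows like $c_0^jj^{j/2}$, and the power of $\tau$ is $\tau^{-(j-1)/2}$ (Lemma~\ref{lem:Hgrowth}). For large $\tau$ the Gaussians give no decay in $n$, which is why the paper needs the separate estimate of Lemma~\ref{lem:convergence}(c).
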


Since $\coll$ meets Region III only in the diagonal $\{a_4=0\}$, \eqref{eq:U3} applies at the points of Region III with $a_4>0$. It agrees with \eqref{eq:U12} on the interface $z_4=0$ outside $\coll$, where both apply, and \eqref{eq:face} is the limit of both at $\coll$ (Theorem~\ref{thm:regularity}). The constant in \eqref{eq:constant} is $2/\sqrt\pi$ times the stationary value $u(0)=45\pi^2/(512\sqrt2)$ of~\cite{CD26}, and the Hessian is $(\pi\tau)^{-1/2}$ times the stationary Hessian $\tfrac53u(0)(I-\tfrac15\one\one^T)$ at the origin. As Bayraktar, Ekren and Zhang observed~\cite{BEZ20b}, the transform identity in (iii) forces the factor $2/\sqrt\pi=\Lap^{-1}[\lambda^{-3/2}](1)$ whenever a single strategy is optimal for both problems; it is the ratio between the finite-horizon and geometric-horizon values conjectured in general by Gravin, Peres and Sivan~\cite[\S1.2]{GPS16}. Since $U$ is a classical solution of \eqref{eq:forward} with $|U-\phi|\leq C\sqrt\tau$, it is the viscosity solution to which the rescaled values of the discrete game converge by Drenska and Kohn~\cite{DK20}, so that the value $V^M$ of the $M$-round game satisfies $\lim_{M\to\infty}V^M(0)/\sqrt M=U(1,0)=45\pi^{3/2}/(256\sqrt2)$.

Our second result identifies the set on which the COMB control is optimal. In decreasing rank order, COMB is $\v_C=(1,0,1,0,1)$. Define the curvature gap
\begin{equation}\label{eq:comb-gap}
 \Delta(\tau,x)=D^2_{\v_*}U(\tau,x)-D^2_{\v_C}U(\tau,x).
\end{equation}
By Theorem~\ref{thm:main}, $\Delta\geq0$ in the ordered sector, and the following theorem identifies the set where equality holds.

\begin{theorem}\label{thm:comb}
For every $\tau>0$ and every point $x$ of the ordered sector, COMB attains the Hamiltonian maximum, i.e., $\Delta(\tau,x)=0$, if and only if
\begin{equation}\label{eq:comb-set}
 x_1=x_2\quad\text{and}\quad x_3=x_4 .
\end{equation}
At every other point $\Delta>0$.
\end{theorem}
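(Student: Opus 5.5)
We would prove Theorem~\ref{thm:comb} through the transform principle, as for Theorem~\ref{thm:main}(ii). By Theorem~\ref{thm:main}(iii) the Laplace transform of $U(\cdot,x)$ in $\tau$ is $\lambda^{-3/2}u(\sqrt\lambda\,x)$. Since both controls are fixed directions, the transform of $\Delta(\cdot,x)$ is
\[
\lambda^{-1/2}\bigl(D^2_{\v_*}u-D^2_{\v_C}u\bigr)(\sqrt\lambda\,x)=:\lambda^{-1/2}\delta(\sqrt\lambda\,x),
\]
where $\delta$ is the stationary COMB gap of~\cite{CD26}.

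\emph{Vanishing on the set \eqref{eq:comb-set}.} Since $\v_*$ and $\v_C$ differ only in the fifth entry, $\delta=u_{55}+2(u_{15}+u_{35})$, and similarly for $\Delta$. On $x_1=x_2$, $x_3=x_4$ we have $y_1=y_3=0$, so $z_2=z_3=0$. This point lies on the boundary of Region I. The companion paper shows that $\delta$ vanishes identically on this set, and the set is invariant under the scaling $x\mapsto\sqrt\lambda\,x$. Hence $\delta(\sqrt\lambda\,x)=0$ for all $\lambda>0$, and uniqueness of the Laplace transform, applied to the continuous function $\Delta(\cdot,x)$, gives $\Delta(\tau,x)=0$ for every $\tau$. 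One can also check this directly from \eqref{eq:U12}: the $\epsilon_2,\epsilon_3$ symmetric pairs cancel when $z_2=z_3=0$.

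\emph{Strict positivity elsewhere.} Fix $x$ in the sector outside \eqref{eq:comb-set}. The proof of Theorem~\ref{thm:main}(ii) reduced $\Delta\geq0$ to complete monotonicity of the curvature gaps: it showed that $\lambda\mapsto\lambda^{-1/2}\delta(\sqrt\lambda\,x)$ is the Laplace transform of a nonnegative function, namely $\Delta(\cdot,x)$, presented as a positive combination of the certified one-variable Gaussian series. We reuse this representation, writing $\Delta(\tau,x)$ as an integral or sum of nonnegative kernels against nonnegative weights. Strictness then reduces to exhibiting one term that is strictly positive for all $\tau>0$. Gaussians and $\erfc$ kernels are strictly positive for $\tau>0$, so it suffices that some weight is nonzero. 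A nonzero weight follows from the companion-paper fact that $\delta(x)>0$ off the set \eqref{eq:comb-set}: if all weights vanished, the transform would vanish identically in $\lambda$, contradicting $\delta(\sqrt\lambda\,x)>0$. The alternative argument is that a nonzero function with a nonnegative real-analytic representation in $\tau$ cannot vanish at an isolated $\tau$ without vanishing identically. Real-analyticity in $\tau$ holds because each kernel $\Hk_j$ is analytic in $\tau>0$ and the series converge locally uniformly with derivatives.

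\emph{Main obstacle.} The hardest step is making the positivity representation of $\Delta(\cdot,x)$ fine enough that a zero at one $\tau_0$ forces all weights to vanish. Concretely, the COMB gap must be one of the $41$ certified quantities, or a nonnegative combination of them with a strictly positive integrand, and not merely a limit. We would first try the analyticity route. Suppose $\Delta(\tau_0,x)=0$ with $\Delta(\cdot,x)\geq0$ everywhere; then $\tau_0$ is a minimum of a nonnegative function. On its own that does not force identical vanishing, so we would instead use that $\Delta(\cdot,x)$ is the inverse Laplace transform of a completely monotone function. By Bernstein's theorem this gives $\Delta(\tau,x)=\int\!\!$ (kernel) $d\mu$ with the kernel strictly positive, and then $\Delta(\tau_0,x)=0$ forces $\mu=0$, which contradicts $\delta\not\equiv0$.
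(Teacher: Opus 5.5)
\textbf{The equality part is fine; the strict-positivity part has a genuine gap.}

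\emph{Equality on $S$.} Your argument works. $S=\{x_1=x_2,\ x_3=x_4\}$ is a cone on which the stationary gap vanishes by the companion paper, so \eqref{eq:curvtransform} and uniqueness of the Laplace transform give $\Delta=0$ there. This is Proposition~\ref{prop:transfer}. The paper instead uses the permutation exchanging $x_1\leftrightarrow x_2$ and $x_3\leftrightarrow x_4$, which needs no input from the companion paper. Your formula for $\delta$ has the wrong sign: $\delta=-u_{55}-2(u_{15}+u_{35})$. You never use it, so this is harmless.

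\emph{The gap in strict positivity.} The certified representation of the gap is not a sum of strictly positive kernels in $\tau$ with scalar weights. The weights are $W_0$, $W_1$, $e^{-2Zb_\lambda(L)}$ and the Duhamel flows. These are completely monotone functions of $\lambda$, that is, transforms of nonnegative measures $\kappa$ in time. In the $\tau$ variable each term is therefore a convolution $\kappa*g(\tau)=\int_{[0,\tau)}g(\tau-s)\,\kappa(ds)$ of a positive profile $g$ with $\kappa$. Such a term is positive at $\tau$ only if $\kappa$ charges $[0,\tau)$.

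Knowing that some weight is nonzero, or that $\lambda^{-1/2}\delta(\sqrt\lambda\,x)\not\equiv0$, only gives $\Delta(\cdot,x)\not\equiv0$. For example, $\widehat\kappa=e^{-\lambda t_0}$ is completely monotone and positive, yet $\kappa*g$ vanishes on $(0,t_0]$.

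Neither of your repairs closes this:
\begin{itemize}
\item A nonnegative real-analytic function can have isolated zeros, for instance $(\tau-\tau_0)^2$.
\item Bernstein's theorem only identifies the representing measure as $\Delta(\tau,x)\,d\tau$ itself. It supplies no strictly positive kernel in $\tau$, so a zero at $\tau_0$ contradicts nothing.
\end{itemize}

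\emph{What is missing.} Two things are needed. First, an explicit summand $\widehat\kappa\,\widehat g$ that can be split off so that the remainder is still in $\CM$, with $g$ continuous and strictly positive. Second, a quantitative bound $\widehat\kappa(\lambda)\geq e^{-A\sqrt\lambda}$ for large $\lambda$. This bound forces $\kappa$ to charge every $[0,t)$, since otherwise $\widehat\kappa(\lambda)\leq e^{-(\lambda-1)t}\widehat\kappa(1)$. Together these are the second statement of Lemma~\ref{lem:cmsign}.

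The paper supplies both explicitly, region by region:
\begin{itemize}
\item \emph{Region III.} The COMB gap vanishes at $\omega_0$ and equals the strictly positive generators $\gen D_1,\gen B_2,\gen P_3$ at $\omega_1,\omega_2,\omega_3$. If some $a_i>0$, you split off the summand of the ordered corner whose zero entries are exactly the vanishing $a_i$. Its weight is $\prod_{a_i>0}W_1(\lambda;a_i,L)\geq e^{-4L\sqrt\lambda}$.
\item \emph{Regions I and II.} One has $\widehat\gamma_C=W_1(b)\bigl[W_0(c)\widehat D_2+W_1(c)\widehat D_3\bigr]$. The second statement of Lemma~\ref{lem:positiveflow} lets you split off $e^{-2Zb_\lambda(L)}\widehat D_m(L,0)$, whose data are again $\gen D_1$ and $\gen B_2$. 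The required decay comes from the lower bounds on $W_1(b)$ for $b>0$ and on $W_0(c)$ for $c<L$, together with $e^{-2Zb_\lambda(L)}\geq e^{-2Z/L}e^{-2Z\sqrt\lambda}$.
\end{itemize}

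The strictly positive term is located by the geometry ($z_2>0$, respectively some $a_i>0$), not by your contradiction argument from $\delta>0$. That argument shows at best that some term is nonzero, not that a term with a strictly positive profile has a weight charging every neighborhood of $0$. Finally, these representations are regional, requiring $a_4>0$ or $z_1>0$. The remaining states lie in $\coll\subset S$ and are covered by the equality part.
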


The equality set \eqref{eq:comb-set} is the same as for the stationary problem~\cite[Theorem~2.2]{CD26}. In addition, the optimal control is not unique. Corollary~\ref{cor:ties} shows that, for every $\tau>0$, the controls $(1,0,0,1,1)$ in Region I, $(1,0,0,1,0)$ in Region II, and $(1,0,0,0,1)$ and $(1,0,0,1,0)$ in Region III have the same curvature as $\v_*$; these are exactly the ties of the stationary problem~\cite[Remark~2.3]{CD26}, transferred by the Laplace transform.

The rest of the paper is organized into two sections and three appendices. Section~\ref{sec:derivation} proves the transform principle, collects the properties of the kernels $\Hk_j$, and derives the three expressions for $U$ from the mode expansions of the stationary solution. Section~\ref{sec:verification} proves that the resulting function satisfies \eqref{eq:forward} and has the properties claimed in Theorem~\ref{thm:main}, and locates the COMB equality set of Theorem~\ref{thm:comb}.  The Hamiltonian inequality in Theorem \ref{thm:main} (ii) is proved with the assistance of a computer, in the sense made precise at the beginning of Section~\ref{sec:verification}. In particular, the reduction to finitely many one-variable Gaussian series is proved in Section~\ref{sec:verification} and Appendices~\ref{app:region3} and~\ref{app:region12}---their signs are certified by exact rational arithmetic with scripts and data in the supplement~\cite{CD26Supp}, and Appendix~\ref{app:numerics} reports independent numerical checks.

%% file: derivation.tex
\section{Derivation of the finite-horizon formula}\label{sec:derivation}

This section derives the explicit formula of Theorem~\ref{thm:main}. The derivation has three steps. We first explain the transform principle, Proposition~\ref{prop:transform}, which produces from the stationary solution a solution of the Laplace-transformed problem in which the adversary is frozen at $\v_*$; its inverse transform is our candidate. We then show that every exponential mode of the stationary solution inverts, by itself, to an exact solution of the linear equation, expressed through the kernels $\Hk_j$ of \eqref{eq:H012} and \eqref{eq:Hrec}. Finally we expand the stationary solution into such modes, region by region, and read off the finite-horizon formula. The expansions are exact and every mode inverts exactly; inverting a whole series term by term needs the summability of Lemma~\ref{lem:convergence}(c), and the passage from the series to the properties of $U$ claimed in Theorem~\ref{thm:main} requires the convergence and regularity results of Section~\ref{sec:verification}, where the formula is proved to solve \eqref{eq:forward}.

\subsection{The transform principle}\label{sec:transform}

Suppose the adversary is frozen at the rank direction $\v_*$. The resulting linear problem is
\begin{equation}\label{eq:frozen}
 U_\tau=\tfrac12D^2_{\v_*}U,\qquad U(0,\cdot)=\phi,
\end{equation}
in the ordered sector with the permutation face conditions $\partial_{e_i-e_{i+1}}U=0$ on $\{x_i=x_{i+1}\}$, $1\leq i\leq4$. These are exactly the conditions under which the extension of $U$ to $\R^5$ by sorting the coordinates is $C^1$ across the walls $\{x_i=x_{i+1}\}$. Since $\Lap[U_\tau]=\lambda\widehat U-\phi$, the Laplace transform $\widehat U(\lambda,x)=\int_0^\infty e^{-\lambda\tau}U(\tau,x)\,d\tau$ of a solution should solve the resolvent equation
\begin{equation}\label{eq:resolvent}
 \lambda\widehat U-\tfrac12D^2_{\v_*}\widehat U=\phi .
\end{equation}
The following observation, which we call the transform principle, produces a solution of \eqref{eq:resolvent} from the stationary solution alone. For four experts it is due to Bayraktar, Ekren and Zhang~\cite{BEZ20b}, who also state it for the nonlinear equations when the same strategy is optimal for both problems.

\begin{proposition}\label{prop:transform}
Let $u$ be the $C^2$ solution of \eqref{eq:stationary} with $u-\phi$ bounded, and suppose $\v_*$ attains its Hamiltonian maximum at every $x$ of the sector. For $\lambda>0$ put $u^\lambda(x)=\lambda^{-3/2}u(\sqrt\lambda\,x)$. Then $u^\lambda$ solves \eqref{eq:resolvent} and satisfies the permutation face conditions.
\end{proposition}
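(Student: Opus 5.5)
The proof is a direct scaling computation. On the ordered sector the hypothesis that $\v_*$ attains the Hamiltonian maximum lets us replace the nonlinear operator in \eqref{eq:stationary} by the frozen linear one:
\[
 u(x)-\tfrac12D^2_{\v_*}u(x)=\phi(x)
\]
at every $x$ of the sector. The property $u-\phi$ bounded is needed only to single out $u$ as the solution of the companion paper. I would then set $y=\sqrt\lambda\,x$, which is again in the ordered sector since $\lambda>0$, and differentiate $u^\lambda(x)=\lambda^{-3/2}u(\sqrt\lambda\,x)$ twice by the chain rule. This gives
\[
 \nabla^2u^\lambda(x)=\lambda^{-3/2}\lambda\,\nabla^2u(y)=\lambda^{-1/2}\nabla^2u(y),
\qquad\text{so}\qquad
 D^2_{\v_*}u^\lambda(x)=\lambda^{-1/2}D^2_{\v_*}u(y).
\]

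Next I would multiply the frozen stationary equation at $y$ by $\lambda^{-1/2}$. Since $\lambda u^\lambda(x)=\lambda^{-1/2}u(y)$, the left side becomes $\lambda u^\lambda(x)-\tfrac12D^2_{\v_*}u^\lambda(x)$. For the right side, $\phi$ is positively $1$-homogeneous: $\lambda^{-1/2}\phi(\sqrt\lambda\,x)=\phi(x)$, because $\max_i$ commutes with multiplication by the positive constant $\sqrt\lambda$. Hence $u^\lambda$ solves \eqref{eq:resolvent} on the sector.

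For the face conditions, $u$ is $C^2$ on $\R^5$ and symmetric under permutations, being the extension by sorting. Symmetry under the transposition of coordinates $i$ and $i+1$ gives $\partial_{e_i-e_{i+1}}u=0$ on $\{x_i=x_{i+1}\}$, and the $C^1$ regularity lets us evaluate this derivative there. The dilation $x\mapsto\sqrt\lambda\,x$ maps each wall $\{x_i=x_{i+1}\}$ to itself, and
\[
 \partial_{e_i-e_{i+1}}u^\lambda(x)=\lambda^{-1}\,\partial_{e_i-e_{i+1}}u(\sqrt\lambda\,x)=0 .
\]
Equivalently, the sorted extension of $u^\lambda$ is the rescaling of the sorted extension of $u$, so it is $C^1$ (indeed $C^2$) across the walls.

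No step is a real obstacle. The only points needing care are that the frozen equation holds on the closed sector, walls included, which follows from $u\in C^2$ and continuity of the maximum, and that the dilation preserves both the sector and the walls.
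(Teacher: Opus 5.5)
Your proposal is correct and follows the paper's proof: freeze the equation at $\v_*$, use $\nabla^2u^\lambda(x)=\lambda^{-1/2}(\nabla^2u)(\sqrt\lambda\,x)$ together with the degree-one homogeneity of $\phi$, and note that the face conditions are preserved by the dilation. The only difference is that you derive the face conditions for $u$ from permutation symmetry and $C^1$ regularity, where the paper cites them from the companion paper; this is harmless.
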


\begin{proof}
By hypothesis $u$ solves the linear equation $u-\tfrac12D^2_{\v_*}u=\phi$. Since $\nabla^2u^\lambda(x)=\lambda^{-1/2}(\nabla^2u)(\sqrt\lambda\,x)$ and $\phi$ is positively homogeneous of degree one,
\[
 \lambda u^\lambda-\tfrac12D^2_{\v_*}u^\lambda
 =\lambda^{-1/2}\Bigl[u-\tfrac12D^2_{\v_*}u\Bigr](\sqrt\lambda\,x)
 =\lambda^{-1/2}\phi(\sqrt\lambda\,x)=\phi(x).
\]
The face conditions hold for $u$~\cite{CD26}, and they are invariant under the dilation $x\mapsto\sqrt\lambda\,x$.
\end{proof}

Proposition~\ref{prop:transform} suggests that a solution of \eqref{eq:frozen} has the Laplace transform
\begin{equation}\label{eq:transform}
 \widehat U(\lambda,x)=\lambda^{-3/2}\,u\!\left(\sqrt\lambda\,x\right),
\end{equation}
that is,
\begin{equation}\label{eq:inversion}
 U(\tau,x)=\Lap^{-1}\!\left[\lambda\mapsto\lambda^{-3/2}u(\sqrt\lambda\,x)\right](\tau),
\end{equation}
and we use \eqref{eq:inversion} as the definition of our candidate $U$. Bayraktar, Ekren and Zhang~\cite{BEZ20b} computed this inverse transform for four experts, heuristically, by moving the contour onto the branch cut, and then verified the result; we invert it mode by mode instead. It is important to point out that we do not need to prove that \eqref{eq:inversion} is the unique solution of \eqref{eq:resolvent}; instead in Section \ref{sec:verification} we prove directly that the function defined by \eqref{eq:inversion} is a classical solution of \eqref{eq:forward}. %

The most useful immediate consequence of \eqref{eq:transform} is at the origin.

\begin{corollary}\label{cor:constant}
The candidate \eqref{eq:inversion} satisfies $U(\tau,0)=\tfrac2{\sqrt\pi}\,u(0)\sqrt\tau$. For five experts $u(0)=45\pi^2/(512\sqrt2)$, which gives the value of $U(\tau,0)$ in \eqref{eq:constant}.
\end{corollary}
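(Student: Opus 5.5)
At $x=0$ the candidate \eqref{eq:inversion} involves only the transform variable, because $u(\sqrt\lambda\cdot 0)=u(0)$ for every $\lambda>0$. The transform \eqref{eq:transform} at the origin is therefore $\widehat U(\lambda,0)=u(0)\,\lambda^{-3/2}$. This is a constant multiple of a pure power of $\lambda$, so the inverse transform can be read off directly. Since $\Lap[\tau^{1/2}](\lambda)=\Gamma(3/2)\lambda^{-3/2}=\tfrac{\sqrt\pi}{2}\lambda^{-3/2}$, we have $\Lap^{-1}[\lambda^{-3/2}](\tau)=\tfrac{2}{\sqrt\pi}\sqrt\tau$. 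Hence $U(\tau,0)=\tfrac2{\sqrt\pi}u(0)\sqrt\tau$.

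The same answer can be checked against the paper's own kernels. From \eqref{eq:Hdef} with $j=0$ and $X=0$, which is an allowed case, $\Lap^{-1}[\lambda^{-3/2}]=\Hk_0(0,\tau)$. By \eqref{eq:H012} this equals $2\tau\Gk(0,\tau)=2\tau/\sqrt{\pi\tau}=2\sqrt\tau/\sqrt\pi$, in agreement with the $\Gamma$-function computation.

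One point needs care: the candidate $U$ must be the function whose transform is $\lambda^{-3/2}u(\sqrt\lambda x)$, and this must hold at $x=0$ in particular. For the formal definition \eqref{eq:inversion} this is immediate. Inverse Laplace transforms are unique among continuous functions of at most exponential growth, and $\tfrac{2}{\sqrt\pi}u(0)\sqrt\tau$ is such a function, so it is the inverse transform. If instead one works with the explicit formula, the origin lies in $\coll$, so the relevant expression is \eqref{eq:face} at $z_4=0$. There the kernel $\Hk_0(0,\tau)=2\sqrt{\tau/\pi}$ is constant in $t$. The value then reduces to $\sqrt{\pi\tau}/(2\sqrt2)+\tfrac{2\sqrt\tau}{k\sqrt\pi}\int_0^\infty\sinh t\,p(t)\dd$. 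This should match the stationary face formula of \cite{CD26} at the origin after multiplying by $\tfrac2{\sqrt\pi}$. That identification is the only potentially delicate step, but the corollary as stated concerns the candidate \eqref{eq:inversion}, so the transform argument suffices.

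Finally, substitute $u(0)=45\pi^2/(512\sqrt2)$ from \cite{CD26}:
\[
\tfrac2{\sqrt\pi}\cdot\tfrac{45\pi^2}{512\sqrt2}=\tfrac{45\pi^{3/2}}{256\sqrt2},
\]
which is the constant in \eqref{eq:constant}.
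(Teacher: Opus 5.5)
Your proposal is correct and is essentially the paper's proof: at $x=0$ the transform \eqref{eq:transform} reduces to $u(0)\lambda^{-3/2}$, its inverse is $2\sqrt{\tau/\pi}$, and substituting $u(0)=45\pi^2/(512\sqrt2)$ from \cite{CD26} gives the constant in \eqref{eq:constant}. You were also right not to rely on \eqref{eq:face} here, since the paper proves the same identity for the explicit function separately in Proposition~\ref{prop:properties}(i).
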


\begin{proof}
By homogeneity $u(\sqrt\lambda\cdot0)=u(0)$, so \eqref{eq:transform} at $x=0$ reads $\widehat U(\lambda,0)=u(0)\lambda^{-3/2}$, and $\Lap^{-1}[\lambda^{-3/2}]=2\sqrt{\tau/\pi}$. The value $u(0)$ is given in \cite[Theorem~2.1]{CD26}. 
\end{proof}
Proposition~\ref{prop:properties}(i) proves the same identity for the function of Theorem~\ref{thm:main}.

\subsection{The inversion kernels}\label{sec:kernels}

We now collect several useful properties of the kernels $\Hk_j$.

\begin{lemma}\label{lem:kernels}
The kernels defined by \eqref{eq:H012} and \eqref{eq:Hrec}, with $\Gk$ and $\Ek$ as in \eqref{eq:gE}, satisfy
\begin{equation}\label{eq:Hprops}
 \partial_X\Hk_j=-\Hk_{j+1},\qquad \partial_\tau \Hk_j=\partial_X^2\Hk_j=\Hk_{j+2}
\end{equation}
for every $j\geq0$, every real $X$ and every $\tau>0$. They satisfy \eqref{eq:Hdef} for every $j\geq0$ when $X>0$, and for $j\leq2$ also when $X=0$. Moreover $\Hk_0\geq0$ and $\Hk_1\geq0$, $\Hk_0(0,\tau)=2\sqrt{\tau/\pi}$, and for $X>0$ every $\Hk_j(X,\tau)\to0$ as $\tau\downarrow0$, faster than any power of $\tau$.
\end{lemma}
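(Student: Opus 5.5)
The plan is to first establish the differentiation rules \eqref{eq:Hprops} directly from the closed forms, and then derive the transform identity \eqref{eq:Hdef} from them by induction on $j$.

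\textbf{Derivative rules.} Write $\xi=X/(2\sqrt\tau)$. Elementary calculus gives
\[
\partial_X\Ek=-\Gk,\qquad \partial_X\Gk=-\tfrac{X}{2\tau}\Gk,\qquad \partial_\tau\Gk=\Bigl(\tfrac{X^2}{4\tau^2}-\tfrac1{2\tau}\Bigr)\Gk=\partial_X^2\Gk,\qquad \partial_\tau\Ek=\tfrac{X}{2\tau}\Gk=\partial_X^2\Ek .
\]
From these:
\begin{itemize}
\item For $j=0$: $\partial_X\Hk_0=2\tau\cdot(-\tfrac X{2\tau})\Gk-\Ek+X\Gk=-\Ek=-\Hk_1$, and $\partial_\tau\Hk_0=2\Gk+2\tau\partial_\tau\Gk-X\partial_\tau\Ek=\Gk=\Hk_2$.
\item For $j=1,2$ the rules are the heat equation for $\Ek$ and $\Gk$, together with the first-order identities above.
\item $\partial_X\Hk_2=-\tfrac X{2\tau}\Gk=-\Hk_3$ by \eqref{eq:Hrec} with $j=2$, since the coefficient $(j-2)$ vanishes.
\end{itemize}
For $j\ge2$, rather than pushing \eqref{eq:Hrec} through the derivatives by brute force, I would prove that the functions $\tilde\Hk_j:=(-\partial_X)^{j-2}\Gk$ satisfy \eqref{eq:Hrec}. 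Differentiating $2\tau\,\partial_X\Gk=-X\Gk$ a total of $j-2$ times (Leibniz) gives
\[
2\tau\,\tilde\Hk_{j+1}=X\tilde\Hk_j-(j-2)\tilde\Hk_{j-1},
\]
which is exactly \eqref{eq:Hrec}. Hence $\Hk_j=\tilde\Hk_j$ for $j\ge2$, the identity $\partial_X\Hk_j=-\Hk_{j+1}$ holds for every $j\ge0$, and $\partial_\tau\Hk_j=\partial_X^2\Hk_j$ because $\partial_\tau$ commutes with $\partial_X$ and $\Gk$ solves the heat equation.

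\textbf{Transform identity.} Start from the classical pairs
\[
\Lap[\Gk](\lambda)=\lambda^{-1/2}e^{-X\sqrt\lambda}\quad(X\ge0),\qquad \Lap[\Ek](\lambda)=\lambda^{-1}e^{-X\sqrt\lambda}\quad(X\ge0).
\]
For $\Hk_0$, note that $\Hk_0(0,\tau)=2\sqrt{\tau/\pi}$, whose transform is $\lambda^{-3/2}$. Since $\partial_X\Hk_0=-\Hk_1$, integrating in $X$ gives
\[
\Lap[\Hk_0]=\lambda^{-3/2}-\int_0^X\lambda^{-1}e^{-s\sqrt\lambda}\,ds=\lambda^{-3/2}e^{-X\sqrt\lambda}.
\]
The interchange of $\Lap$ with the $X$-integral is justified by Fubini, because the integrands are bounded by polynomial growth in $\tau$.

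For $j\ge3$ and $X>0$, differentiate $\Lap[\Hk_2]=\lambda^{-1/2}e^{-X\sqrt\lambda}$ in $X$ under the integral sign. This is legitimate for $X\ge X_0>0$, since on that range $|\partial_X^{m}\Gk|\le C\tau^{-N}e^{-X_0^2/(8\tau)}$, which is integrable near $\tau=0$ and polynomially bounded at infinity. Each differentiation contributes a factor $-\sqrt\lambda$, and combined with $\partial_X\Hk_j=-\Hk_{j+1}$ this gives $\lambda^{(j-3)/2}e^{-X\sqrt\lambda}$.

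\textbf{Positivity and small-time decay.}
\begin{itemize}
\item $\Hk_1=\Ek>0$ is immediate.
\item For $\Hk_0$: it is decreasing in $X$ (its $X$-derivative is $-\Ek<0$) and tends to $0$ as $X\to+\infty$, by the asymptotic $\erfc(\xi)\sim e^{-\xi^2}/(\xi\sqrt\pi)$. Hence $\Hk_0>0$. Equivalently, $\Hk_0=2\sqrt\tau\,(e^{-\xi^2}/\sqrt\pi-\xi\erfc\xi)$ and the bracket is positive.
\item Decay: for $X>0$, each $\Hk_j$ equals a polynomial in $X$ and $\tau^{-1/2}$ times $\Gk$, plus, for $j\le1$, terms involving $\Ek$. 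All of these are $O(\tau^{-N}e^{-X^2/(4\tau)})$, using $\erfc\xi\le e^{-\xi^2}$. This is faster than any power of $\tau$.
\end{itemize}

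\textbf{Main obstacle.} The main obstacle is the bookkeeping in the recursion, namely matching the coefficient $(j-2)$ in \eqref{eq:Hrec}. The identification $\Hk_j=(-\partial_X)^{j-2}\Gk$ resolves it cleanly. The only other delicate point is the justification of interchanging the Laplace transform with $X$-differentiation, which is where $X>0$ is needed.
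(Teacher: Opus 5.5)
Your proposal is correct and follows essentially the paper's route. The derivative rules come from the closed forms plus the recursion; your identification $\Hk_j=(-\partial_X)^{j-2}\Gk$ via Leibniz on $2\tau\,\partial_X\Gk=-X\Gk$ is the paper's induction on \eqref{eq:Hrec} run in the other direction. The case $j\geq3$, $X>0$ of \eqref{eq:Hdef} is the same differentiation under the integral sign with a Gaussian majorant of the type in Lemma~\ref{lem:Hgrowth}, and positivity of $\Hk_0$ is the same argument $\Hk_0=\int_X^\infty\Ek\,ds$. The only minor difference, which is fine, is that you derive the transform of $\Hk_0$ from $\Hk_0(0,\tau)$ and Fubini instead of quoting it as classical.
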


\begin{proof}
Since $\partial_X\Gk=-\tfrac X{2\tau}\Gk$ and $\partial_X\Ek=-\Gk$, differentiating \eqref{eq:H012} gives $\partial_X\Hk_j=-\Hk_{j+1}$ for $j=0,1,2$. If this holds for all indices up to some $j\geq2$, then differentiating \eqref{eq:Hrec} gives
\[
 \partial_X\Hk_{j+1}=\frac1{2\tau}\Hk_j-\frac X{2\tau}\Hk_{j+1}+\frac{j-2}{2\tau}\Hk_j
 =-\Bigl(\frac X{2\tau}\Hk_{j+1}-\frac{j-1}{2\tau}\Hk_j\Bigr)=-\Hk_{j+2},
\]
so $\Hk_{j+1}=-\partial_X\Hk_j$ for every $j$, every real $X$ and every $\tau>0$, and $\partial_X^2\Hk_j=\Hk_{j+2}$. For the time derivative, differentiating \eqref{eq:H012} gives $\partial_\tau \Ek=\tfrac X{2\tau}\Gk=\Hk_3$ and $\partial_\tau \Hk_0=\Gk=\Hk_2$, and since $\partial_\tau$ commutes with $\partial_X$, induction on $j$ with $\Hk_{j+1}=-\partial_X\Hk_j$ gives $\partial_\tau \Hk_j=\Hk_{j+2}$ for every $j$. In particular the identity holds at $X=0$, where some of the exponents below vanish on walls of the sector.

The transforms of $\Gk$, $\Ek$ and $2\tau \Gk-X\Ek$ are the classical $\lambda^{-1/2}e^{-X\sqrt\lambda}$, $\lambda^{-1}e^{-X\sqrt\lambda}$ and $\lambda^{-3/2}e^{-X\sqrt\lambda}$ for $X\geq0$, which is \eqref{eq:Hdef} for $j\leq2$. For $j\geq3$ and $X>0$ we argue by induction on $j$. On $X\geq\delta>0$ the bound \eqref{eq:Hgrowth} of Lemma~\ref{lem:Hgrowth} below, whose proof uses only \eqref{eq:H012} and \eqref{eq:Hrec}, dominates $e^{-\lambda\tau}|\Hk_j(X,\tau)|$ by an integrable function of $\tau$. So the transform of $\Hk_j=-\partial_X\Hk_{j-1}$ is $-\partial_X$ of the transform of $\Hk_{j-1}$, that is $-\partial_X\bigl(\lambda^{(j-4)/2}e^{-X\sqrt\lambda}\bigr)=\lambda^{(j-3)/2}e^{-X\sqrt\lambda}$. At $X=0$ the argument stops at $j=2$, because the bound is not integrable there once $j\geq3$. Nonnegativity of $\Hk_0$ follows from $\Hk_0=\int_X^\infty \Ek(s,\tau)\,ds$, and the small-$\tau$ statement from $\Ek(X,\tau)\leq e^{-X^2/(4\tau)}$ and the Gaussian factor of $\Gk$.
\end{proof}

The expansion in Regions I and II uses kernels $\Hk_j$ of unbounded order, so we record how fast they can grow.

\begin{lemma}\label{lem:Hgrowth}
There is an absolute constant $c_0$ such that for all $j\geq2$, $X\geq0$ and $\tau>0$,
\begin{equation}\label{eq:Hgrowth}
 \bigl|\Hk_j(X,\tau)\bigr|
 \leq\frac{c_0^{\,j}\,j^{j/2}}{\tau^{(j-1)/2}}\,e^{-X^2/(8\tau)} .
\end{equation}
Consequently, for every $Z\geq0$,
\begin{equation}\label{eq:Hfactorial}
 \frac{(4Z)^j}{j!}\bigl|\Hk_j(X,\tau)\bigr|
 \leq c_1\sqrt\tau\left(\frac{c_1Z}{\sqrt{j\tau}}\right)^{\!j}e^{-X^2/(8\tau)},
\end{equation}
which tends to $0$ faster than any geometric sequence in $j$, locally uniformly in $Z$ and $\tau>0$.
\end{lemma}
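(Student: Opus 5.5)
The approach is to identify every kernel of order $j\geq2$ as an $X$-derivative of the Gaussian $\Gk$, and then to bound derivatives of a Gaussian by a Cauchy estimate in the complex plane. The first part of the proof of Lemma~\ref{lem:kernels} shows that $\partial_X\Hk_j=-\Hk_{j+1}$ for all $j$, and that argument uses only \eqref{eq:H012} and \eqref{eq:Hrec}, so there is no circularity. Since $\Hk_2=\Gk$, it follows that
\[
 \Hk_j=(-\partial_X)^{j-2}\Gk,\qquad j\geq2 .
\]
Substituting $y=X/(2\sqrt\tau)$ and $f(y)=e^{-y^2}$ gives
\[
 \Hk_j(X,\tau)=\frac{(-1)^{j-2}}{\sqrt{\pi\tau}\,(2\sqrt\tau)^{j-2}}\,f^{(j-2)}(y).
\]
The prefactor carries exactly the power $\tau^{-(j-1)/2}$. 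It therefore suffices to show that $|f^{(n)}(y)|\leq C^n n^{n/2}e^{-y^2/2}$ for all $n\geq0$ and $y\geq0$, because $e^{-y^2/2}=e^{-X^2/(8\tau)}$.

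For this derivative bound I would use Cauchy's estimate for the entire function $e^{-w^2}$ on the circle $|w-y|=r$, with radius $r=\max(\sqrt n,1)$. Write $w=y+a+ib$ with $a^2+b^2=r^2$. Then
\[
 |e^{-w^2}|=e^{-(y+a)^2+b^2},
\]
and the elementary inequality $(y+a)^2\geq y^2/2-a^2$ gives $|e^{-w^2}|\leq e^{-y^2/2+r^2}$. Hence
\[
 |f^{(n)}(y)|\leq n!\,r^{-n}e^{r^2}e^{-y^2/2}\leq n!\,n^{-n/2}e^{n}e^{-y^2/2}\quad(n\geq1).
\]
Using $n!\leq n^n$, this is at most $e^n n^{n/2}e^{-y^2/2}$; the case $n=0$ is trivial. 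Passing from $n=j-2$ to $j$ costs only constant factors, since $(j-2)^{(j-2)/2}\leq j^{j/2}$ and the powers of $2$ and $e$ are absorbed into $c_0^{\,j}$. This yields \eqref{eq:Hgrowth}. The same estimate could instead be read off from Cramér's inequality for Hermite polynomials, but the Cauchy argument is self-contained.

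For \eqref{eq:Hfactorial} I would multiply \eqref{eq:Hgrowth} by $(4Z)^j/j!$ and use Stirling's lower bound $j!\geq(j/e)^j$:
\[
 \frac{(4Z)^j}{j!}|\Hk_j|\leq\sqrt\tau\,\frac{(4c_0Z/\sqrt\tau)^j\,j^{j/2}}{(j/e)^j}e^{-X^2/(8\tau)}
 =\sqrt\tau\Bigl(\frac{4ec_0Z}{\sqrt{j\tau}}\Bigr)^je^{-X^2/(8\tau)} ,
\]
which is the claimed bound with $c_1=\max(4ec_0,1)$. The base $c_1Z/\sqrt{j\tau}$ tends to $0$ as $j\to\infty$, uniformly for $Z$ and $\tau^{-1}$ in compact sets. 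This gives decay faster than any geometric sequence, locally uniformly in $Z$ and $\tau>0$.

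The only real obstacle is obtaining the sharp $n^{n/2}$ growth rather than $n!$, and doing so with a Gaussian factor of half strength that remains uniform in $y$. Choosing the Cauchy radius $\sqrt n$ balances $r^{-n}$ against $e^{r^2}$ and gives exactly this. All remaining steps are bookkeeping of constants.
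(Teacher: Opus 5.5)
Your proof is correct, but it takes a different route from the paper's.

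\textbf{The paper's route.} It never uses the derivative identity. It writes $\Hk_{n+2}=\Gk\,P_n$ and reads off from \eqref{eq:Hrec} the three-term recursion $P_{n+1}=\frac{X}{2\tau}P_n-\frac{n}{2\tau}P_{n-1}$. It then proves by induction the real-variable bound $|P_n|\leq\bigl(X/(2\tau)+\sqrt{n/(2\tau)}\bigr)^n$. Finally it absorbs the polynomial growth in $X$ into half of the Gaussian via $w^ne^{-w^2/4}\leq(2n/e)^{n/2}$. This gives \eqref{eq:Hgrowth} with $c_0$ of order $2$.

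\textbf{Your route.} You start from $\Hk_j=(-\partial_X)^{j-2}\Gk$. This is legitimate, since the first part of the proof of Lemma~\ref{lem:kernels} derives $\partial_X\Hk_j=-\Hk_{j+1}$ from \eqref{eq:H012} and \eqref{eq:Hrec} alone, so there is no circularity with the transform part of that lemma. You then apply Cauchy's estimate on a circle of radius $\sqrt n$ in the variable $y=X/(2\sqrt\tau)$. Balancing $r^{-n}$ against $e^{r^2}$ produces the factor $n^{n/2}$ in one line. The inequality $(y+a)^2\geq y^2/2-a^2$ yields the half-strength Gaussian uniformly in $y$. Your constant, $c_0=e/2$, is in fact slightly better than the paper's.

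\textbf{What each buys.} The paper's argument stays on the real line and works directly with the defining recursion, with no appeal to analyticity. Yours is shorter and more conceptual. It is also the same device the paper uses later, in the proof of Lemma~\ref{lem:convergence}(c). There the radius is $X/4$ rather than $2\sqrt{n\tau}$ in the $X$ variable. That trades your $n^{n/2}$ growth for factorial growth $i!\,(4/X)^i$, in exchange for a bound whose Laplace transform decays exponentially in $X$.

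Your deduction of \eqref{eq:Hfactorial} from $j!\geq(j/e)^j$ with $c_1=\max(1,4ec_0)$ is exactly the paper's.
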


\begin{proof}
Write $\Hk_{n+2}=\Gk\,P_n$. By \eqref{eq:H012} and \eqref{eq:Hrec}, $P_0=1$, $P_1=X/(2\tau)$ and
\[
 P_{n+1}=\frac X{2\tau}P_n-\frac n{2\tau}P_{n-1},\qquad n\geq1 .
\]
Put $a=X/(2\tau)$, $b_n=\sqrt{n/(2\tau)}$ and $A_n=a+b_n$, which increases with $n$. We claim that $|P_n|\leq A_n^n$. This holds for $n=0,1$, and if it holds up to $n$, then
\[
 A_{n+1}^{n+1}=a\,A_{n+1}^n+b_{n+1}A_{n+1}\,A_{n+1}^{n-1}
 \geq a\,A_n^n+b_{n+1}^2A_{n-1}^{n-1}
 \geq a|P_n|+\frac n{2\tau}|P_{n-1}|\geq|P_{n+1}| .
\]
With $w=X/\sqrt{2\tau}$ we have $A_n^n=(2\tau)^{-n/2}(w+\sqrt n)^n$ and $e^{-X^2/(4\tau)}=e^{-w^2/2}$, and since $w^ne^{-w^2/4}\leq(2n/e)^{n/2}$,
\[
 (w+\sqrt n)^ne^{-w^2/4}\leq2^n\max\bigl(w,\sqrt n\bigr)^ne^{-w^2/4}\leq2^n(2n)^{n/2}.
\]
Hence
\[
 |\Hk_{n+2}|\leq\frac{2^n(2n)^{n/2}}{\sqrt{\pi\tau}\,(2\tau)^{n/2}}\,e^{-X^2/(8\tau)}
 =\frac{2^n\,n^{n/2}}{\sqrt\pi\,\tau^{(n+1)/2}}\,e^{-X^2/(8\tau)},
\]
which is \eqref{eq:Hgrowth} with $j=n+2$ after absorbing constants. For \eqref{eq:Hfactorial} use $j!\geq(j/e)^j$, which gives it with $c_1=\max(1,4ec_0)$, since $\tau^{-(j-1)/2}=\sqrt\tau\,\tau^{-j/2}$.
\end{proof}

The key structural fact is that every mode of the expansions below is by itself an exact solution of the linear problem.

\begin{lemma}\label{lem:mode}
Let $X(x)=d\cdot x$ be linear with $d\cdot\v_*=\pm k$, and let $P$ be a polynomial with $\partial_{\v_*}P=0$. Then for every $j\geq0$ the function $\Psi(\tau,x)=P(x)\Hk_j(X(x),\tau)$ satisfies
\begin{equation}\label{eq:modeheat}
 \partial_\tau\Psi=\tfrac12D^2_{\v_*}\Psi .
\end{equation}
Moreover, if $P$ is homogeneous of degree $p$, then at every $x$ with $X(x)>0$, and for $p\leq2$ also at every $x$ with $X(x)=0$,
\begin{equation}\label{eq:modetransform}
 \Lap\bigl[P\,\Hk_p(X,\cdot)\bigr](\lambda)=\lambda^{-3/2}\,P(\sqrt\lambda\,x)\,e^{-X(\sqrt\lambda\,x)} .
\end{equation}
\end{lemma}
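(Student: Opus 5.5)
The plan is to prove \eqref{eq:modeheat} by a direct chain-rule computation using the kernel identities \eqref{eq:Hprops} of Lemma~\ref{lem:kernels}, and \eqref{eq:modetransform} from the transform identity \eqref{eq:Hdef} together with homogeneity.

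For the heat equation, write $D_{\v_*}$ for the directional derivative $\v_*\cdot\nabla$, so that $D^2_{\v_*}w=D_{\v_*}D_{\v_*}w$ for smooth $w$. Since $X$ is linear, $D_{\v_*}X=d\cdot\v_*=\pm k$ is constant. Differentiating $\Psi=P\,\Hk_j(X,\tau)$ once gives
\[
 D_{\v_*}\Psi=(D_{\v_*}P)\,\Hk_j+P\,(d\cdot\v_*)\,\partial_X\Hk_j=\mp k\,P\,\Hk_{j+1},
\]
because $\partial_{\v_*}P=0$ by hypothesis and $\partial_X\Hk_j=-\Hk_{j+1}$ by \eqref{eq:Hprops}. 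Differentiating once more, and using again $D_{\v_*}P=0$ and $\partial_X\Hk_{j+1}=-\Hk_{j+2}$, gives
\[
 D^2_{\v_*}\Psi=(\pm k)^2P\,\Hk_{j+2}=2P\,\Hk_{j+2}.
\]
By \eqref{eq:Hprops}, $\partial_\tau\Psi=P\,\Hk_{j+2}$, so $\partial_\tau\Psi=\tfrac12 D^2_{\v_*}\Psi$. This step holds at every real $X$ and every $\tau>0$, since \eqref{eq:Hprops} was proved for all real $X$. It also uses only $(d\cdot\v_*)^2=2$, so the sign of $d\cdot\v_*$ plays no role.

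For the transform, fix $x$ with $X(x)>0$, or with $X(x)=0$ and $p\leq2$. These are exactly the cases in which \eqref{eq:Hdef} holds for $j=p$ by Lemma~\ref{lem:kernels}. Since $P(x)$ does not depend on $\tau$,
\[
 \Lap\bigl[P\,\Hk_p(X,\cdot)\bigr](\lambda)=P(x)\,\lambda^{(p-3)/2}e^{-X(x)\sqrt\lambda}.
\]
Homogeneity of degree $p$ gives $P(\sqrt\lambda\,x)=\lambda^{p/2}P(x)$, and linearity gives $X(\sqrt\lambda\,x)=\sqrt\lambda\,X(x)$. The right side above is therefore $\lambda^{-3/2}P(\sqrt\lambda\,x)e^{-X(\sqrt\lambda\,x)}$, which is \eqref{eq:modetransform}. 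Convergence of the Laplace integral for $\lambda>0$ comes from Lemma~\ref{lem:kernels} in the same cases, so nothing further is needed.

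No step is a genuine obstacle; the lemma is bookkeeping once Lemma~\ref{lem:kernels} is in hand. The one point needing care is the edge case $X=0$. There the heat equation still holds pointwise, but the transform identity is restricted to $p\leq2$, because $\Hk_3(0,\cdot)$ does not represent $\Lap^{-1}[1]$ and $\Hk_4(0,\cdot)$ is not integrable at $\tau=0$. This is why the statement limits the $X=0$ case to $p\leq2$.
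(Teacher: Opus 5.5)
Your proposal is correct and follows the paper's proof: the same chain-rule computation using $\partial_{\v_*}P=0$, $(d\cdot\v_*)^2=k^2=2$ and the kernel identities \eqref{eq:Hprops}, followed by the transform identity \eqref{eq:Hdef} combined with the homogeneity of $P$ and the linearity of $X$. Your step-by-step first-derivative calculation and your remark that the sign of $d\cdot\v_*$ is irrelevant only make explicit what the paper compresses into one line.
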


\begin{proof}
Because $\partial_{\v_*}P=0$ and $\partial_{\v_*}X=d\cdot\v_*$,
\[
 D^2_{\v_*}\bigl[P\Hk_j\bigr]=P\,(d\cdot\v_*)^2\,\partial_X^2\Hk_j=k^2P\,\Hk_{j+2}=2P\,\Hk_{j+2},
\]
which equals $2\partial_\tau\Psi$ by \eqref{eq:Hprops}. For \eqref{eq:modetransform}, $P(\sqrt\lambda\,x)=\lambda^{p/2}P(x)$ and $\lambda^{-3/2}\lambda^{p/2}=\lambda^{(p-3)/2}$, which under the stated conditions on $X(x)$ is the transform of $\Hk_p(X(x),\cdot)$ by Lemma~\ref{lem:kernels}.
\end{proof}

Combining the definition \eqref{eq:inversion} with Lemma~\ref{lem:mode} gives the recipe we follow for the rest of this section.

\begin{corollary}\label{cor:recipe}
Suppose that on a region of the sector the stationary solution can be written as
\begin{equation}\label{eq:modeexpansion}
 u(x)=x_1+\frac1k\sum_{\iota}C_\iota\,P_\iota(x)\,e^{-X_\iota(x)},
\end{equation}
with $X_\iota$ linear, $X_\iota\geq0$ on the region, $X_\iota>0$ on the region whenever $p_\iota\geq3$, $d_\iota\cdot\v_*=\pm k$, and $P_\iota$ homogeneous of degree $p_\iota$ with $\partial_{\v_*}P_\iota=0$. Let $x$ be a point of the region at which the series in \eqref{eq:inverted} below converges for every $\tau>0$ to a continuous function of $\tau$, and at which, for every $\lambda>0$,
\[
 \sum_\iota\int_0^\infty e^{-\lambda\tau}\bigl|C_\iota\,P_\iota(x)\,\Hk_{p_\iota}(X_\iota(x),\tau)\bigr|\,d\tau<\infty .
\]
Then
\begin{equation}\label{eq:inverted}
 U(\tau,x)=x_1+\frac1k\sum_\iota C_\iota\,P_\iota(x)\,\Hk_{p_\iota}(X_\iota(x),\tau),
\end{equation}
the inverse transform in \eqref{eq:inversion} being the continuous function of $\tau$ with the transform \eqref{eq:transform}. Every term of \eqref{eq:inverted} satisfies \eqref{eq:modeheat}. If moreover $X_\iota>0$ at a point $x$ of the region for every $\iota$, and the series converges uniformly for small $\tau$ near $x$, then $U(\tau,\cdot)\to x_1=\phi$ as $\tau\downarrow0$ near $x$.
\end{corollary}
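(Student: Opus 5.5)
The corollary follows from two ingredients already in hand: the uniqueness of Laplace transforms, applied at a fixed point $x$, and the termwise statements of Lemma~\ref{lem:mode}. Throughout, fix a point $x$ of the region that satisfies the hypotheses, and denote by $S(\tau)$ the right side of \eqref{eq:inverted}. By assumption $S$ is continuous on $(0,\infty)$.

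\emph{Step 1: the transform of $S$.} Take $\lambda>0$. The summability hypothesis is precisely the condition that lets Fubini--Tonelli exchange $\int_0^\infty e^{-\lambda\tau}\,d\tau$ with $\sum_\iota$, so that
\[
\Lap[S](\lambda)=\frac{x_1}{\lambda}+\frac1k\sum_\iota C_\iota\,\Lap\bigl[P_\iota(x)\Hk_{p_\iota}(X_\iota(x),\cdot)\bigr](\lambda).
\]
Each summand is then evaluated with \eqref{eq:modetransform}. The hypotheses of Lemma~\ref{lem:mode} hold term by term: $d_\iota\cdot\v_*=\pm k$, $\partial_{\v_*}P_\iota=0$, and $P_\iota$ is homogeneous of degree $p_\iota$. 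We also need either $X_\iota(x)>0$, or $X_\iota(x)=0$ with $p_\iota\leq2$; this is exactly the sign assumption on the exponents. Each summand therefore equals $\lambda^{-3/2}P_\iota(\sqrt\lambda\,x)e^{-X_\iota(\sqrt\lambda\,x)}$. Next, $\Lap[x_1](\lambda)=x_1/\lambda=\lambda^{-3/2}(\sqrt\lambda\,x)_1$. The point $\sqrt\lambda\,x$ lies in the same region, because the regions are cones, so \eqref{eq:modeexpansion} applies there. Summing, $\Lap[S](\lambda)=\lambda^{-3/2}u(\sqrt\lambda\,x)$, which is \eqref{eq:transform}.

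\emph{Step 2: identification.} By Lerch's theorem, a continuous function on $(0,\infty)$ is determined by its Laplace transform on $\lambda>0$. Hence $S$ is the continuous inverse transform named in \eqref{eq:inversion}, and this is how $U(\tau,x)$ is defined. The fact that each term satisfies \eqref{eq:modeheat} is the first part of Lemma~\ref{lem:mode}. The term $x_1$ satisfies it too, since it is independent of $\tau$ and has zero Hessian.

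\emph{Step 3: the initial condition.} Assume now that every $X_\iota$ is positive at $x$. By continuity the $X_\iota$ stay positive at all points near $x$. By Lemma~\ref{lem:kernels}, each term $C_\iota P_\iota\Hk_{p_\iota}(X_\iota,\tau)$ tends to $0$ as $\tau\downarrow0$. Uniform convergence of the series for small $\tau$ near $x$ allows the limit in $\tau$ to be taken term by term, so $U(\tau,\cdot)\to x_1$. In the ordered sector $\phi=x_1$.

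The main delicacy is in Step 3. For small $\tau$ the decay of a single term is controlled only pointwise, through the Gaussian factor $e^{-X_\iota^2/(8\tau)}$ of Lemma~\ref{lem:Hgrowth}. The interchange of limit and sum is therefore justified only by the uniform convergence hypothesis. Pointwise decay alone would not suffice for series with infinitely many terms.
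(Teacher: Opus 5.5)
Your proposal is correct and follows the paper's proof step for step: the termwise transform justified by the summability hypothesis, Lemma~\ref{lem:mode} applied at the dilated point $\sqrt\lambda\,x$ (which stays in the region because the regions are cones), uniqueness of the transform for continuous functions, and a termwise limit for the terminal statement. The one loose inference is in Step~3, where continuity of each $X_\iota$ separately does not give a single neighborhood on which infinitely many exponents stay positive; within the region this holds instead because each set $\{X_\iota=0\}$ meets the polyhedral cone in one of its finitely many faces, none of which contains $x$, and the paper's proof does not address this point either.
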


\begin{proof}
The summability lets the Laplace transform of the series in \eqref{eq:inverted} be taken term by term. By \eqref{eq:modetransform}, and by \eqref{eq:modeexpansion} at $\sqrt\lambda\,x$, which lies in the region because the regions are cones, the result is $\lambda^{-3/2}u(\sqrt\lambda\,x)$; since a continuous function is determined by its Laplace transform, this gives \eqref{eq:inverted}. The terms satisfy \eqref{eq:modeheat} by Lemma~\ref{lem:mode}, and the last statement holds because $\Hk_j(X,\tau)\to0$ faster than any power of $\tau$ for each fixed $X>0$.
\end{proof}

\begin{remark}\label{rem:terminal}
Nonnegativity of every exponent $X_\iota$ is what makes the terminal condition come out of the series at all. It is a nontrivial structural property of the expansions below: several families of modes that appear formally with negative exponents turn out to have identically vanishing coefficients (Lemma~\ref{lem:vanishing}). Nonnegativity alone is not quite enough for a pointwise statement, since $\Hk_1(0,\tau)=1$, $\Hk_2(0,\tau)=(\pi\tau)^{-1/2}$ is unbounded, and for $j\geq3$ the recursion \eqref{eq:Hrec} gives $\Hk_j(0,\tau)=0$ for odd $j$ and, for even $j$, nonzero multiples of $\tau^{(1-j)/2}$ with alternating signs; of the kernels that are not identically zero at $X=0$, only $\Hk_0(0,\tau)$ tends to zero as $\tau\downarrow0$. The exponents do vanish somewhere, since on the diagonal every $X_\iota$ is zero, and there the Region III series diverges. We therefore do not prove the terminal condition from the series: it is Theorem~\ref{thm:regularity}(d), which proves the stronger, uniform bound $|U(\tau,x)-\phi(x)|\leq C\sqrt\tau$ from the contour representation of Section~\ref{sec:collision}.
\end{remark}

\subsection{Four experts}\label{sec:four}

The four-expert problem was solved by Bayraktar, Ekren and Zhang, in the geometric case in~\cite{BEZ20} and in the finite-horizon case in~\cite{BEZ20b}. Nevertheless, we apply the recipe from this paper to the four-expert problem for two reasons: the mode expansion of the stationary four-expert solution is the background term in Regions~I and~II, and inverting it mode by mode gives the finite-horizon solution of~\cite{BEZ20b} as a series. Throughout this subsection $\v_*$ means the four-expert rank direction $(1,0,1,0)$, which is the restriction of the five-expert $\v_*$ to the first four coordinates. In the coordinates \eqref{eq:coords}, which involve only $x_1,\dots,x_4$, the ordered four-expert sector is $z_1\geq z_2\geq|z_3|$ and the stationary solution is $u_4=x_1+\tfrac1kF_4$ with $F_4$ as in \eqref{eq:F4}.

\begin{lemma}\label{lem:F4expansion}
For $z_1\geq z_2\geq|z_3|\geq0$ with $z_1>0$,
\begin{equation}\label{eq:F4series}
 F_4=\sum_{m\geq0}\ \sum_{\substack{\epsilon\in\{\pm1\}^3\\ \epsilon_1\epsilon_2\epsilon_3=(-1)^m}}
      c^{(4)}_{m,\epsilon}\,e^{-X^{(4)}_{m,\epsilon}},
\end{equation}
with $c^{(4)}_{m,\epsilon}$ and $X^{(4)}_{m,\epsilon}$ as in \eqref{eq:c4D4}, the series converging absolutely. Every exponent occurring with a nonzero coefficient satisfies $X^{(4)}_{m,\epsilon}\geq0$, and $\partial_{\v_*}X^{(4)}_{m,\epsilon}=-\epsilon_2k$.
\end{lemma}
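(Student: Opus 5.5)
The plan is to expand each of the three terms of \eqref{eq:F4} into exponentials and collect coefficients. Write $q=e^{-z_1}\in(0,1)$, which holds since $z_1>0$. The first step is to use the two classical power series
\[
 \theta(z_1)=\arctan q=\sum_{m\geq0}\frac{(-1)^m}{2m+1}q^{2m+1},
 \qquad
 \Lam(z_1)=\log\frac{1+q}{1-q}=2\sum_{m\geq0}\frac{q^{2m+1}}{2m+1}.
\]
Both converge absolutely for $|q|<1$. Here I use $\coth(z_1/2)=(1+q)/(1-q)$. Next, write the hyperbolic products as sums over signs $\epsilon\in\{\pm1\}^3$:
\[
 \cosh z_1\cosh z_2\cosh z_3=\tfrac18\sum_\epsilon e^{\epsilon\cdot z},
 \qquad
 \sinh z_1\sinh z_2\sinh z_3=\tfrac18\sum_\epsilon\epsilon_1\epsilon_2\epsilon_3\,e^{\epsilon\cdot z},
\]
where $z=(z_1,z_2,z_3)$. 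Multiplying these out gives a double series with absolutely summable terms, because $|q^{2m+1}e^{\epsilon\cdot z}|\leq e^{-2mz_1}e^{z_2+|z_3|}$. Rearranging is therefore legitimate, and the combined series converges absolutely.

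Collecting terms, the coefficient of $q^{2m+1}e^{\epsilon\cdot z}=e^{-[(2m+1-\epsilon_1)z_1-\epsilon_2z_2-\epsilon_3z_3]}$ is
\[
 \frac1{8(2m+1)}\bigl((-1)^m+\epsilon_1\epsilon_2\epsilon_3\bigr).
\]
This vanishes unless $\epsilon_1\epsilon_2\epsilon_3=(-1)^m$, and in that case it equals $(-1)^m/(4(2m+1))$. The exponent is exactly $X^{(4)}_{m,\epsilon}$ of \eqref{eq:c4D4}, so the sum over $\epsilon$ reproduces the stated coefficients. The remaining term is
\[
 -\tfrac12\sinh(z_2+z_3)=-\tfrac14e^{z_2+z_3}+\tfrac14e^{-(z_2+z_3)}.
\]
Its first piece has exponent $X^{(4)}_{0,(1,1,1)}=-z_2-z_3$ and cancels the coefficient $\tfrac14$ there, which gives $c^{(4)}_{0,(1,1,1)}=0$. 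Its second piece has exponent $X^{(4)}_{0,(1,-1,-1)}=z_2+z_3$ and raises the coefficient there to $\tfrac12$. These are precisely the two exceptions.

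For nonnegativity of the exponents, first suppose $(m,\epsilon_1)\neq(0,1)$. Then $2m+1-\epsilon_1\geq2$, so
\[
 X^{(4)}_{m,\epsilon}\geq2z_1-z_2-|z_3|\geq0
\]
on the sector, using $z_1\geq z_2$ and $z_1\geq|z_3|$. In the remaining case $m=0$ and $\epsilon_1=1$, the parity constraint forces $\epsilon_2\epsilon_3=1$. The option $(1,1,1)$ has zero coefficient, and $(1,-1,-1)$ gives $X^{(4)}_{0,(1,-1,-1)}=z_2+z_3\geq z_2-|z_3|\geq0$.

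For the directional derivative, \eqref{eq:coords} with $\v_*=(1,0,1,0)$ gives
\[
 \partial_{\v_*}z_1=0,\qquad \partial_{\v_*}z_2=2/k=k,\qquad \partial_{\v_*}z_3=0.
\]
Hence $\partial_{\v_*}X^{(4)}_{m,\epsilon}=-\epsilon_2k$.

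The only delicate point is the bookkeeping of the two exceptional modes and the sign convention $\epsilon\leftrightarrow\sigma$. The rest is routine once absolute convergence has been secured from $z_1>0$.
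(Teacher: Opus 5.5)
Your proof is correct and follows essentially the same route as the paper. You expand $\theta$ and $\Lam$ in odd powers of $e^{-z_1}$, expand the hyperbolic products over sign vectors, collect the coefficient $\frac{1}{8(2m+1)}\bigl((-1)^m+\epsilon_1\epsilon_2\epsilon_3\bigr)$, absorb $-\tfrac12\sinh(z_2+z_3)$ into the two exceptional $m=0$ modes, and check the exponent signs and $\partial_{\v_*}z_2=k$. The paper groups slightly differently, combining $\theta(z_1)\cosh z_1$ and $\tfrac12\Lam(z_1)\sinh z_1$ first before multiplying by the $z_2,z_3$ factors; your majorant $e^{-2mz_1}e^{z_2+|z_3|}$ makes the absolute convergence a little more explicit.
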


\begin{proof}
Let $z_1>0$, and expand the two transcendental factors of \eqref{eq:F4} in their exponential series. With $n=2m+1$,
\[
\begin{gathered}
 \theta(z_1)\cosh z_1=\tfrac12\sum_{m\geq0}\frac{(-1)^m}{n}\sum_{\epsilon_1=\pm1}e^{-(n-\epsilon_1)z_1},\\
 \tfrac12\Lam(z_1)\sinh z_1=\tfrac12\sum_{m\geq0}\frac1{n}\sum_{\epsilon_1=\pm1}\epsilon_1e^{-(n-\epsilon_1)z_1},
\end{gathered}
\]
while $\cosh z_2\cosh z_3=\tfrac14\sum_{\epsilon_2,\epsilon_3}e^{\epsilon_2z_2+\epsilon_3z_3}$ and $\sinh z_2\sinh z_3=\tfrac14\sum_{\epsilon_2,\epsilon_3}\epsilon_2\epsilon_3e^{\epsilon_2z_2+\epsilon_3z_3}$. Adding the two products gives
\[
 \tfrac18\sum_{m\geq0}\frac1{2m+1}\sum_{\epsilon\in\{\pm1\}^3}
   \bigl[(-1)^m+\epsilon_1\epsilon_2\epsilon_3\bigr]e^{-X^{(4)}_{m,\epsilon}},
\]
an absolutely convergent series in which the bracket is $2(-1)^m$ when $\epsilon_1\epsilon_2\epsilon_3=(-1)^m$ and $0$ otherwise. This is \eqref{eq:F4series} with the stated coefficients, except for the $m=0$ entries. The single term with $\epsilon=(1,1,1)$ has $X^{(4)}_{0,(1,1,1)}=-(z_2+z_3)\leq0$ and coefficient $\tfrac14$, that is $\tfrac14e^{z_2+z_3}$; the remaining elementary term of \eqref{eq:F4} is $-\tfrac12\sinh(z_2+z_3)=-\tfrac14e^{z_2+z_3}+\tfrac14e^{-(z_2+z_3)}$. The growing exponentials cancel identically, and the surviving $\tfrac14e^{-(z_2+z_3)}$ adds to the coefficient $\tfrac14$ of the mode $\epsilon=(1,-1,-1)$, which carries the same exponent $X^{(4)}_{0,(1,-1,-1)}=z_2+z_3$. This is the stated modification.

For the sign of the exponents, if $m\geq1$ then $X^{(4)}_{m,\epsilon}\geq2mz_1-z_2-|z_3|\geq2mz_1-2z_1\geq0$; if $m=0$ and $\epsilon_1=-1$ then $X^{(4)}\geq2z_1-z_2-|z_3|\geq0$; and the two remaining $m=0$ modes are $\epsilon=(1,1,1)$, whose coefficient is now zero, and $\epsilon=(1,-1,-1)$, with $X^{(4)}=z_2+z_3\geq0$. Finally $\partial_{\v_*}z_1=\partial_{\v_*}z_3=0$ and $\partial_{\v_*}z_2=k$, which gives $\partial_{\v_*}X^{(4)}_{m,\epsilon}=-\epsilon_2k$.
\end{proof}

\begin{proposition}\label{prop:four}
For $x_1\geq x_2\geq x_3\geq x_4$ not all equal and $\tau>0$, the solution $U_4$ of the finite-horizon four-expert problem found in~\cite{BEZ20b} is
\begin{equation}\label{eq:U4}
 U_4(\tau,x)=x_1+\frac1k\sum_{m\geq0}\ \sum_{\substack{\epsilon\in\{\pm1\}^3\\ \epsilon_1\epsilon_2\epsilon_3=(-1)^m}}
   c^{(4)}_{m,\epsilon}\,\Hk_0\bigl(X^{(4)}_{m,\epsilon},\tau\bigr),
\end{equation}
the series converging absolutely and uniformly on compact sets. At the origin,
\begin{equation}\label{eq:U4origin}
 U_4(\tau,0)=\frac1k\cdot\frac\pi4\cdot2\sqrt{\tau/\pi}=\tfrac12\sqrt{\pi\tau/2}.
\end{equation}
\end{proposition}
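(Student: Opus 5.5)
The plan is to apply Corollary~\ref{cor:recipe} to the mode expansion of Lemma~\ref{lem:F4expansion}, and then to identify the result with the solution of~\cite{BEZ20b}.

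\textbf{Step 1: hypotheses of the recipe.} All modes have constant prefactor, so $p_\iota=0$ and only $\Hk_0$ occurs. By Lemma~\ref{lem:F4expansion}, every exponent with a nonzero coefficient satisfies $X^{(4)}_{m,\epsilon}\ge0$, and $\partial_{\v_*}X^{(4)}_{m,\epsilon}=\pm k$. So Lemma~\ref{lem:mode} applies, including at points where $X=0$, since $p=0\le2$.

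\textbf{Step 2: convergence and summability.} I will use the bounds $0\le\Hk_0(X,\tau)\le\Hk_0(0,\tau)=2\sqrt{\tau/\pi}$, valid for $X\ge0$ because $\partial_X\Hk_0=-\Ek\le0$. I will also use the Gaussian-type bound $\Hk_0(X,\tau)\le 2\tau\Gk(X,\tau)$, which follows from $X\Ek\ge0$. Off the point $z_1=0$, the exponents grow linearly in $m$: $X^{(4)}_{m,\epsilon}\ge 2(m-1)z_1$. Hence on compact sets with $z_1\ge\delta>0$ and $\tau\le T$, the terms are bounded by $C\,e^{-(m-1)^2\delta^2/T}/(2m+1)$, which is summable, uniformly. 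The condition ``not all equal'' means exactly $z_1>0$. Indeed, $z_1=0$ forces $z_2=z_3=0$ through $z_1\ge z_2\ge|z_3|$, and then all four coordinates coincide.

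For the Laplace summability in Corollary~\ref{cor:recipe}, each term is nonnegative with transform $\lambda^{-3/2}e^{-\sqrt\lambda X}/(2m+1)$. This is summable for $X$ growing linearly in $m$. Continuity of the sum in $\tau$ then follows from uniform convergence.

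\textbf{Step 3: conclusion of the recipe.} Corollary~\ref{cor:recipe} then gives that \eqref{eq:U4} is the continuous inverse transform of $\lambda^{-3/2}u_4(\sqrt\lambda x)$.

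\textbf{Step 4: identification with \cite{BEZ20b}.} This is the step I expect to be delicate, because it depends on how~\cite{BEZ20b} characterizes its solution. That paper shows that its $U_4$ has Laplace transform $\lambda^{-3/2}u_4(\sqrt\lambda x)$; this is the transform principle applied there, since the same strategy is optimal for both problems. I would check that their $U_4$ is continuous in $\tau$ and has at most $\sqrt\tau$ growth, so its transform exists. Injectivity of the Laplace transform on continuous functions then identifies it with \eqref{eq:U4} pointwise for $z_1>0$.

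If instead one wants to avoid relying on their transform statement, an alternative route is available. Each term solves the frozen heat equation, the sum converges with derivatives, and $U_4-x_1\to0$ as $\tau\downarrow0$ wherever all exponents are positive. Combining this with uniqueness for the frozen linear problem with face conditions would also give the identification, but this route is more work.

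\textbf{Step 5: the origin.} Formula \eqref{eq:U4origin} is understood as the limit $x\to0$, or equivalently via scaling. Taking $x\to0$ along the sector with $z_1>0$ is illegitimate for the series directly, so I would instead use Corollary~\ref{cor:constant} in its four-expert form: $U_4(\tau,0)=\tfrac{2}{\sqrt\pi}u_4(0)\sqrt\tau$, where $u_4(0)=F_4(0)/k=\theta(0)/k=\pi/(4k)$. This gives $\tfrac1k\cdot\tfrac\pi4\cdot2\sqrt{\tau/\pi}$ as stated.

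As a formal consistency check, summing $\Hk_0(0,\tau)$ times the coefficients gives $\sum_m\sum_\epsilon c^{(4)}_{m,\epsilon}$ with $F_4(0)=\pi/4$, matching the Leibniz series $\sum_m(-1)^m/(2m+1)=\pi/4$. The continuity of $U_4$ at the collision point needed to justify this limit is taken from~\cite{BEZ20b}.
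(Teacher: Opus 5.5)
Your proposal is correct and follows essentially the paper's own argument. The bound $0\le\Hk_0(X,\tau)\le2\sqrt{\tau/\pi}\,e^{-X^2/(4\tau)}$ together with $X^{(4)}_{m,\epsilon}\ge2(m-1)z_1$ gives absolute, locally uniform convergence and term-by-term Laplace transforms, and uniqueness of the Laplace transform for functions continuous in $\tau$, applied with the transform identity $\lambda^{-3/2}u_4(\sqrt\lambda\,x)$ of Bayraktar--Ekren--Zhang (including at $x=0$), gives both the identification and the value at the origin; the only slips are that it is $|c^{(4)}_{m,\epsilon}|\,\Hk_0$ rather than each signed term that is nonnegative, and that neither the alternative uniqueness route in Step~4 nor the continuity at the collision point in Step~5 is needed.
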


\begin{proof}
The coordinates are not all equal exactly when $z_1>0$, since $z_1\geq z_2\geq|z_3|$. Absolute convergence follows from $X^{(4)}_{m,\epsilon}\geq2(m-1)z_1$ for $m\geq1$, $0\leq \Hk_0(X,\tau)\leq2\sqrt{\tau/\pi}\,e^{-X^2/(4\tau)}$ and $|c^{(4)}_{m,\epsilon}|\leq\tfrac1{2(2m+1)}$, which covers the exceptional coefficient $c^{(4)}_{0,(1,-1,-1)}=\tfrac12$. Since $\Hk_0\geq0$ has the transform $\lambda^{-3/2}e^{-X\sqrt\lambda}$, the same bounds let the Laplace transform of \eqref{eq:U4} be taken term by term, and by Lemmas~\ref{lem:mode} and~\ref{lem:F4expansion} it is $\lambda^{-3/2}u_4(\sqrt\lambda\,x)$. Bayraktar, Ekren and Zhang prove that $U_4$ is a classical solution with the same Laplace transform~\cite{BEZ20b}; both functions are continuous in $\tau$, so they coincide. At $x=0$ the transform is $\lambda^{-3/2}u_4(0)$ with $u_4(0)=F_4(0,0,0)/k=\pi/(4k)$, which gives \eqref{eq:U4origin}, the value found in~\cite{BEZ20b}.
\end{proof}

Formula \eqref{eq:U4} is the four-expert solution of Bayraktar, Ekren and Zhang~\cite{BEZ20b} in a different form. They invert the same transform on the branch cut, which gives an oscillatory integral $\int_{-\infty}^\infty r^{-2}e^{-\tau r^2}W(r,x)\,dr$ whose integrand contains the $2\pi$-periodic square wave and whose convergence at $r=0$ rests on a cancellation. Inverting mode by mode gives instead a series of Gaussians and complementary error functions with explicit rational coefficients and nonnegative exponents, whose terms are bounded, so that no singular terms need to cancel.

\subsection{Region III}\label{sec:region3}

In Region III the stationary solution is $u=x_1+\tfrac1kF$ with
\begin{equation}\label{eq:F3}
 F=\sum_{j=0}^3b_j\,\sigma_j(a_1,a_2,a_3),
\end{equation}
where $\sigma_j$ is the sum of the $\binom3j$ products of $j$ factors $\sinh a_i$ and $3-j$ factors $\cosh a_i$, and $b_j=\Kop^je$ evaluated at $a_4$. Here
\begin{equation}\label{eq:operators}
 (\Iop f)(X)=2\sinh^2X\int_X^\infty\frac{f(t)}{\sinh^3t}\dd,
 \qquad
 \Kop=-\coth X+\csch X\,\Iop
\end{equation}
are the operators of \cite[Section~3]{CD26}, and $e$ is the trace \eqref{eq:equad}. Only the value $e(a_4)$ requires the quadrature: the derivatives of $e$ in the $b_j$ are eliminated by the compatibility relation $6b_1=e'-3$ and the trace equation \eqref{eq:eode} below. Explicitly, \cite[Section~3.6]{CD26} gives
\begin{equation}\label{eq:bcoefficients}
\begin{gathered}
 b_0=e,\qquad 6b_1=e'-3,\qquad 42b_2=e''+6e+18\Lam\sinh L,\\
 336b_3=e'''+20e'+144\Lam\cosh L-312,
\end{gathered}
\end{equation}
all evaluated at $L=a_4$; these follow from the two integration-by-parts identities $\partial_X\Iop=\Iop(\partial_X+\Kop)$ and $\Kop^2-[\partial_X,\Kop]=\mathrm{Id}$ together with $\Kop1=-\Lam\sinh X$ and $\Kop^21=2-\Lam\cosh X$. We begin with the exponential expansion of the trace.

\begin{proposition}\label{prop:trace}
For $L>0$,
\begin{equation}\label{eq:eseries}
 e(L)=\sum_{m\geq0}\bigl(A_m+B_mL\bigr)e^{-2mL},
\end{equation}
with $A_m$ and $B_m$ as in \eqref{eq:AB}. The series converges for every $L>0$ and diverges at $L=0$, where $e(0)=45\pi^2/512$ instead.
\end{proposition}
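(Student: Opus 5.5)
The plan is to expand the quadrature \eqref{eq:equad} directly in powers of $q=e^{-2L}$. We first expand $I$ and then the weight $\sech^2t\,\csch^5t$, integrate term by term over $(L,\infty)$, and finally multiply by $3\cosh L$.

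First, $I(X)=\int_0^X\sinh^4s\cosh^2s\,ds$ is elementary. Writing $\sinh^4\cosh^2=\tfrac1{32}(\cosh6s-2\cosh4s-\cosh2s+2)$ gives
\[
 I(X)=\tfrac1{32}\Bigl(\tfrac16\sinh6X-\tfrac12\sinh4X-\tfrac12\sinh2X+2X\Bigr).
\]
This is a finite combination of $e^{\pm2jX}$ together with the linear term $X/16$. Second, for $t>0$ we have
\[
 \sech^2t\,\csch^5t=2^7e^{-7t}(1+e^{-2t})^{-2}(1-e^{-2t})^{-5}.
\]
Expanding with $w=e^{-2t}$ gives $e^{-7t}\sum_n\gamma_nw^n$. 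Here $(1+w)^{-2}(1-w)^{-5}$ has coefficients of polynomial growth, since its only singularities are poles at $w=\pm1$, so the series converges for $t>0$. Multiplying by $I(t)$ produces terms $e^{-(2n+1)t}$, some times $t$. The dangerous growing pieces are $e^{6t}\cdot e^{-7t}=e^{-t}$, so every exponent is odd and positive. Third, we integrate termwise on $(L,\infty)$, which is legitimate by absolute convergence for $L>0$. Each term gives $(\text{const}+\text{const}\cdot L)e^{-(2n+1)L}$, since $\int_L^\infty te^{-ct}\,dt=(L/c+1/c^2)e^{-cL}$. Multiplying by $3\cosh L=\tfrac32(e^L+e^{-L})$ then yields exactly a series $\sum_m(A_m+B_mL)e^{-2mL}$ with even exponents, confirming the form \eqref{eq:eseries}. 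The leading term is $A_0=\tfrac12$, consistent with the decay $\tfrac13e^{-t}$ of the integrand: $3\cdot\tfrac12e^L\cdot\tfrac13e^{-L}=\tfrac12$.

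The main obstacle is identifying the resulting coefficients with the closed forms \eqref{eq:AB}. The direct convolution is messy, so instead of computing it I would verify the claim through an ODE. Differentiating \eqref{eq:equad} gives a first-order relation,
\[
 (e/\cosh)'=-3I\sech^2\csch^5,
\]
which after clearing denominators by multiplying by $\sinh^5L\cosh^3L$ becomes a linear equation with coefficients that are trigonometric polynomials in $e^{\pm L}$ and a forcing term $I(L)$ of the above exponential-polynomial form. I would substitute the candidate series $\sum(A_m+B_mL)e^{-2mL}$ into this equation and check that it satisfies a finite-order linear recurrence in $m$, separately for the $L$-coefficient and the constant coefficient. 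Since $A_m$ and $B_m$ are rational functions of $m$, this reduces to a polynomial identity in $m$, checkable by hand or by computer algebra. Uniqueness then follows because the homogeneous solution of the first-order ODE is $C\cosh L$, which grows. The series is bounded, as $B_m=\O(m^3)$ while $e^{-2mL}$ decays geometrically for $L>0$, so the boundedness of $e$ forces $C=0$ and the series equals $e$. A few low-order coefficients ($m=0,1,2$) should also be matched directly against the expansion above as a sanity check.

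For convergence at $L>0$, the coefficients are $\O(m^4)$ and so are dominated by geometric decay. At $L=0$ the terms become $A_m\sim-\tfrac3{16}m^2\not\to0$, so the series diverges. The value $e(0)=45\pi^2/512$ is the one computed in \cite{CD26}, or it can be obtained from \eqref{eq:equad} via $u(0)=e(0)/k$ together with Corollary~\ref{cor:constant}.
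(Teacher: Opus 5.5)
Your proposal is correct and is essentially the paper's proof. The paper likewise differentiates the quadrature to get $\cosh L\,e'-\sinh L\,e=-3I(L)/\sinh^5L$ and verifies that the closed forms \eqref{eq:AB} satisfy the resulting coefficient recurrences as polynomial identities in $n$. It identifies the series with $e$ because the difference of two bounded solutions is a bounded multiple of $\cosh L$, hence zero, and gets divergence at $L=0$ from $A_m\sim-\tfrac3{16}m^2$, citing \cite{CD26} for $e(0)$. Your preliminary direct expansion of the quadrature is not needed.

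One point to tighten: the polynomial identity only covers $n$ large enough that every index in the recurrence is at least one. The first few equations involve $A_0=\tfrac12$, where the closed formula gives $1$, and coefficients set to zero at negative index, where the formula is nonzero. So checking them directly is a required part of the verification, not a sanity check. The paper divides by $\sinh^5L$ to get a two-term recurrence and checks $n=0,1$ directly; your cleared-denominator recurrence has higher order and needs correspondingly more such checks.
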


\begin{proof}
Differentiating \eqref{eq:equad} once gives the first-order equation
\begin{equation}\label{eq:efirst}
 \cosh L\,e'(L)-\sinh L\,e(L)=-\frac{3I(L)}{\sinh^5L},
\end{equation}
which is \eqref{eq:tanhe} of Appendix~\ref{app:region12}. Insert the ansatz \eqref{eq:eseries}. With $w=e^{-2L}$, the $m$-th summand $(A_m+B_mL)w^m$ contributes $\bigl(B_m-2m(A_m+B_mL)\bigr)w^m$ to $e'$, while $2e^{-L}\cosh L=1+w$ and $2e^{-L}\sinh L=1-w$, and the closed form of $I$ in the proof of Lemma~\ref{lem:p} gives
\[
 \frac{6e^{-L}I(L)}{\sinh^5L}=\frac{N(w)+12Lw^3}{(1-w)^5},
 \qquad
 N(w)=\tfrac12\bigl(1-3w-3w^2+3w^4+3w^5-w^6\bigr).
\]
Multiplying \eqref{eq:efirst} by $2e^{-L}$ and collecting the coefficient of $w^n$ gives, with $A_{-1}=B_{-1}=0$,
\begin{equation}\label{eq:efirstrec}
\begin{aligned}
 (2n+1)B_n+(2n-3)B_{n-1}&=\tfrac12(n+1)n(n-1)(n-2),\\
 (2n+1)A_n+(2n-3)A_{n-1}&=B_n+B_{n-1}+\nu_n,
\end{aligned}
\end{equation}
where $\nu_n$ is the coefficient of $w^n$ in the Taylor series of $N(w)(1-w)^{-5}$ at $w=0$, that is, $\nu_0=\tfrac12$, $\nu_1=1$ and $\nu_n=-\tfrac12(2n-1)(n^2-n-1)$ for $n\geq2$. The coefficient $2n+1$ never vanishes, so \eqref{eq:efirstrec} determines every coefficient. It gives $B_0=B_1=B_2=0$, $B_3=\tfrac{12}7$, $A_0=A_1=\tfrac12$, $A_2=-\tfrac25$ and, from $7A_3=B_3+B_2+\nu_3-3A_2=\tfrac{12}7-\tfrac{25}2+\tfrac65$,
\begin{equation}\label{eq:A3}
 A_3=-\frac{671}{490}.
\end{equation}
The closed forms \eqref{eq:AB} satisfy both relations of \eqref{eq:efirstrec} for every $n$: after clearing denominators each is a polynomial identity in $n$, and $n=0,1$ are checked directly. Since $A_m=\O(m^2)$ and $B_m=\O(m^3)$, the series \eqref{eq:eseries} converges for $|w|<1$ together with its termwise derivative, so it solves \eqref{eq:efirst} for $L>0$, and it tends to $A_0=\tfrac12$ as $L\to\infty$. The trace $e$ solves the same equation and is bounded, so the difference of the two solves $y'=\tanh L\,y$ and is a bounded multiple of $\cosh L$, hence zero. This proves \eqref{eq:eseries} for $L>0$. At $L=0$ the terms $A_m\sim-\tfrac3{16}m^2$ do not tend to zero, while $e(0)=45\pi^2/512$~\cite[Lemma~3.1]{CD26}.
\end{proof}

\begin{remark}\label{rem:A3}
Differentiating \eqref{eq:efirst} once more, and using $I'=\sinh^4X\cosh^2X$ and \eqref{eq:efirst} itself, gives the second-order trace equation $e''+5\coth X\,e'-6e=-3\coth X$. After multiplication by $\sinh X$ it reads
\begin{equation}\label{eq:eode}
 \sinh L\,e''+5\cosh L\,e'-6\sinh L\,e+3\cosh L=0 ,
\end{equation}
and inserting \eqref{eq:eseries} gives the recursion
\begin{equation}\label{eq:erec}
\begin{aligned}
 &(5-4n)B_n+2(2n+1)(n-3)(A_n+B_nL)\\
 &\qquad+(4n+1)B_{n-1}-2(2n-3)(n+2)(A_{n-1}+B_{n-1}L)+3\delta_{n0}+3\delta_{n1}=0 .
\end{aligned}
\end{equation}
Its constant part is resonant at $n=3$, where the coefficient of $A_3$ vanishes, so \eqref{eq:erec} determines every coefficient except $A_3$, the amplitude of the homogeneous mode $e^{-6L}$; equivalently, it determines $e$ only up to a multiple of a decaying solution of \eqref{eq:eode}. This is what a local analysis of \eqref{eq:eode} cannot see. The first-order equation \eqref{eq:efirst}, which remembers the lower limit of the integral $I$, has no resonance and fixes $A_3$. The value \eqref{eq:A3} carries the same information as $e(0)=45\pi^2/512$, which comes from the closed evaluation of the quadrature in \cite[Lemma~3.1]{CD26} and ultimately from $\int_0^\infty s\csch s\,ds=\pi^2/4$. As independent checks, we have compared \eqref{eq:eseries} with the quadrature \eqref{eq:equad} to $40$ digits at $L=0.35,0.6,1,2$, and \eqref{eq:erec} with \eqref{eq:AB} as exact rationals for every $m\leq200$.
\end{remark}

Expanding each $b_j$ in the same way, write
\begin{equation}\label{eq:bexpansion}
 b_j(L)=\sum_{m\geq0}\bigl(\alpha^{(j)}_m+\beta^{(j)}_mL\bigr)e^{-2mL},\qquad j=0,1,2,3 .
\end{equation}
Using \eqref{eq:bcoefficients} and $\Lam(L)\sinh L=\sum_mJ_me^{-2mL}$, $\Lam(L)\cosh L=\sum_mK_me^{-2mL}$ with $J_m,K_m$ as in \eqref{eq:JK}, we obtain exactly the coefficients \eqref{eq:alphabeta}: differentiating \eqref{eq:eseries} term by term replaces $(A_m,B_m)$ by $(B_m-2mA_m,-2mB_m)$, and each further derivative repeats this substitution. Two properties of the resulting coefficients $c^a_{m,\epsilon},c^b_{m,\epsilon}$ of \eqref{eq:cab} are what make the finite-horizon formula work.

\begin{lemma}\label{lem:vanishing}
$c^b_{m,\epsilon}=0$ for $m\leq2$ and all $\epsilon$. Moreover
\begin{equation}\label{eq:van}
 c^a_{0,\epsilon}=\tfrac12\prod_{i=1}^3(1-\epsilon_i),
 \qquad
 c^a_{1,\epsilon}=\tfrac43\,\one\{\epsilon_1+\epsilon_2+\epsilon_3=-1\},
 \qquad
 c^a_{2,\epsilon}=0\ \text{ if }\ \epsilon_1\epsilon_2\epsilon_3=1 .
\end{equation}
Consequently $c^a_{0,\epsilon}=c^b_{0,\epsilon}=0$ unless $\epsilon=(-1,-1,-1)$, where $c^a_{0,\epsilon}=4$, and $c^a_{1,(1,1,1)}=c^b_{1,(1,1,1)}=0$.
\end{lemma}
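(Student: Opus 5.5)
The plan is a direct evaluation. Every coefficient in \eqref{eq:alphabeta} is an explicit combination of $A_m$, $B_m$, $J_m$, $K_m$ and $\delta_{m0}$, so for $m\leq2$ I will substitute the closed forms \eqref{eq:AB} and \eqref{eq:JK} and evaluate the symmetric sums \eqref{eq:cab}. Nothing beyond Proposition~\ref{prop:trace}, the expansion \eqref{eq:bexpansion} and the formulas \eqref{eq:alphabeta} is needed.

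\emph{The $c^b$ statement.} The factor $m(m^2-1)(m^2-4)$ in $B_m$ gives $B_0=B_1=B_2=0$. Each $\beta^{(j)}_m$ in \eqref{eq:alphabeta} is a polynomial multiple of $B_m$. Hence $\beta^{(j)}_m=0$ for $j\leq3$ and $m\leq2$, and therefore $c^b_{m,\epsilon}=0$ for all $\epsilon$.

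\emph{The values of $\alpha^{(j)}_m$.} With $B_m=0$, the $\alpha^{(j)}_m$ for $m\leq2$ depend only on $A_m$, $J_m$, $K_m$ and $\delta_{m0}$.
\begin{itemize}
\item For $m=0$, with $A_0=\tfrac12$ and $J_0=K_0=1$, I expect
\[
(\alpha^{(0)}_0,\alpha^{(1)}_0,\alpha^{(2)}_0,\alpha^{(3)}_0)=(\tfrac12,-\tfrac12,\tfrac12,-\tfrac12).
\]
For instance $\alpha^{(3)}_0=\tfrac1{14}(6-13)$. Hence $c^a_{0,\epsilon}=\tfrac12(1-s_1+s_2-s_3)=\tfrac12\prod_i(1-\epsilon_i)$.
\item For $m=1$, with $A_1=\tfrac12$, $J_1=-\tfrac23$ and $K_1=\tfrac43$, I expect $(\tfrac12,-\tfrac16,-\tfrac16,\tfrac12)$. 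Here $\alpha^{(3)}_1=-\tfrac1{14}+\tfrac8{14}$.
\item For $m=2$, with $A_2=-\tfrac25$, $J_2=-\tfrac2{15}$ and $K_2=\tfrac8{15}$, I expect $(-\tfrac25,\tfrac4{15},-\tfrac4{15},\tfrac25)$.
\end{itemize}

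\emph{Sorting by sign pattern.} Since $c^a_{m,\epsilon}$ depends on $\epsilon$ only through $s_1,s_2,s_3$, it suffices to sort $\epsilon$ by its number of $-1$ entries. The symmetric functions $(s_1,s_2,s_3)$ then take the four values $(3,3,1)$, $(1,-1,-1)$, $(-1,-1,1)$ and $(-3,3,-1)$. For $m=1$ these give $c^a_{1,\epsilon}=0,0,\tfrac43,0$ respectively, which is the indicator formula, since $s_1=-1$ means exactly two minus signs. For $m=2$ the patterns with $s_3=1$ are the first and third, and both give $0$.

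\emph{The consequences.} They are immediate from \eqref{eq:van} and the vanishing of $c^b$. First, $\prod_i(1-\epsilon_i)$ vanishes unless all $\epsilon_i=-1$, where it equals $8$, so $c^a_{0,(-1,-1,-1)}=4$. Second, $\epsilon=(1,1,1)$ has $s_1=3\neq-1$, so $c^a_{1,(1,1,1)}=0$.

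The only real obstacle is bookkeeping, chiefly the $\alpha^{(2)}_m$ and $\alpha^{(3)}_m$ entries, where $J_m$, $K_m$ and $\delta_{m0}$ enter. I would double-check these by recomputing $b_2,b_3$ from \eqref{eq:bcoefficients} using the substitution rule $(A,B)\mapsto(B-2mA,-2mB)$ for each derivative. These are finite rational computations and could also be confirmed by the exact-arithmetic scripts.
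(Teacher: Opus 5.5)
Your proposal is correct and follows the paper's proof essentially verbatim. Both arguments use $B_0=B_1=B_2=0$ to kill every $\beta^{(j)}_m$; the tuples $(\alpha^{(j)}_m)_j$ you obtain for $m=0,1,2$, which check out, are exactly the paper's, and evaluating $\sum_j\alpha^{(j)}_m s_j(\epsilon)$ on the four sign patterns then yields \eqref{eq:van} and its consequences.
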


\begin{proof}
Every $\beta^{(j)}_m$ is a multiple of $B_m$, and $B_0=B_1=B_2=0$. For the rest, \eqref{eq:alphabeta} gives $\bigl(\alpha^{(j)}_0\bigr)_j=(\tfrac12,-\tfrac12,\tfrac12,-\tfrac12)$, so $c^a_{0,\epsilon}=\tfrac12(s_0-s_1+s_2-s_3)=\tfrac12\prod(1-\epsilon_i)$; $\bigl(\alpha^{(j)}_1\bigr)_j=(\tfrac12,-\tfrac16,-\tfrac16,\tfrac12)$, so $c^a_{1,\epsilon}=\tfrac12(1+s_3)-\tfrac16(s_1+s_2)$, which is checked to be $\tfrac43$ when $s_1=-1$ and $0$ in the three other cases; and $\bigl(\alpha^{(j)}_2\bigr)_j=(-\tfrac25,\tfrac4{15},-\tfrac4{15},\tfrac25)$, so $c^a_{2,\epsilon}=-\tfrac25(1-s_3)+\tfrac4{15}(s_1-s_2)$, which vanishes when $s_3=1$.
\end{proof}

\begin{proposition}\label{prop:region3}
On Region III with $a_4>0$, for $\tau>0$, the candidate \eqref{eq:inversion} is given by \eqref{eq:U3}. Every exponent occurring with a nonzero coefficient satisfies $X_{m,\epsilon}\geq0$ on Region III, and $\partial_{\v_*}X_{m,\epsilon}=-\epsilon_3k$, $\partial_{\v_*}a_4=0$.
\end{proposition}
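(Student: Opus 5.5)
The plan is to apply Corollary~\ref{cor:recipe} to the Region III stationary formula \eqref{eq:F3}. First I will produce the mode expansion. Writing $\cosh a_i=\tfrac12\sum_{\epsilon_i}e^{\epsilon_ia_i}$ and $\sinh a_i=\tfrac12\sum_{\epsilon_i}\epsilon_ie^{\epsilon_ia_i}$, the sum $\sigma_j(a_1,a_2,a_3)$ becomes $\tfrac18\sum_{\epsilon}s_j(\epsilon)e^{\epsilon_1a_1+\epsilon_2a_2+\epsilon_3a_3}$. Inserting \eqref{eq:bexpansion} for $b_j(a_4)$, which is valid for $a_4>0$ by Proposition~\ref{prop:trace} and the termwise differentiation described after it, then gives
\[
F=\tfrac18\sum_{m,\epsilon}\bigl(c^a_{m,\epsilon}+c^b_{m,\epsilon}a_4\bigr)e^{-X_{m,\epsilon}} .
\]
This is \eqref{eq:modeexpansion} with $P=1$ of degree $0$ and $P=a_4$ of degree $1$, so the kernels that appear are $\Hk_0$ and $\Hk_1$, exactly as in \eqref{eq:U3}.

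Next I will check the structural hypotheses of the corollary. The derivatives follow directly from \eqref{eq:coords}. We have $\partial_{\v_*}y_1=\partial_{\v_*}y_2=\partial_{\v_*}y_3=k$ up to sign bookkeeping, and $\partial_{\v_*}y_4=0$. For $\v_*=(1,0,1,0,0)$ this gives $y_1=k$, $y_2=-k$, $y_3=k$ and $y_4=0$. Hence $\partial_{\v_*}a_1=\tfrac13(k-k)=0$, $\partial_{\v_*}a_2=0$, $\partial_{\v_*}a_3=k$ and $\partial_{\v_*}a_4=k-k=0$. It follows that $\partial_{\v_*}X_{m,\epsilon}=-\epsilon_3k$ and $\partial_{\v_*}a_4=0$.

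For nonnegativity I use $0\le a_1\le a_2\le a_3\le a_4$ in Region III. When $m\ge2$ this gives $X_{m,\epsilon}\ge 4a_4-a_1-a_2-a_3\ge a_4\ge0$. The cases $m=0,1$ are handled by Lemma~\ref{lem:vanishing}. At $m=0$ only $\epsilon=(-1,-1,-1)$ survives, with $X=a_1+a_2+a_3\ge0$. At $m=1$ the mode $\epsilon=(1,1,1)$ vanishes, and every other $\epsilon$ has at least one $\epsilon_i=-1$, so $X\ge2a_4-a_j-a_l\ge0$. The degrees are at most $1\le2$, so the case $X=0$ is permitted.

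It remains to verify the summability hypotheses of Corollary~\ref{cor:recipe} at each point with $a_4>0$. The coefficients grow polynomially: $\alpha,\beta=\O(m^6)$ from \eqref{eq:AB}. For the kernels I use $0\le\Hk_0\le2\sqrt{\tau/\pi}\,e^{-X^2/(4\tau)}$ and $0\le\Hk_1\le e^{-X^2/(4\tau)}$. Since $X_{m,\epsilon}\ge2(m-2)a_4$ grows linearly in $m$, the series converges uniformly for $\tau$ in compacts, and its limit is continuous in $\tau$. Laplace summability follows termwise from $\Lap[\Hk_p]=\lambda^{(p-3)/2}e^{-X\sqrt\lambda}$ with nonnegative kernels, giving $\sum m^6e^{-2(m-2)a_4\sqrt\lambda}<\infty$. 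The corollary then yields \eqref{eq:U3}, where the factor $\tfrac1{8k}$ comes from $u=x_1+F/k$.

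The main obstacle is the justification of the expansion of the $b_j$ at $L=a_4$. The relations \eqref{eq:bcoefficients} involve $e',e'',e'''$ and $\Lam\sinh L$, $\Lam\cosh L$, so I must confirm that the termwise derivatives of \eqref{eq:eseries} converge for $L>0$. This holds because $A_m,B_m$ are polynomial in $m$ and $w=e^{-2L}<1$. I also need the exponential series $\Lam\sinh L=\sum J_me^{-2mL}$ and $\Lam\cosh L=\sum K_me^{-2mL}$. These I would derive from $\Lam(L)=2\sum_{n\text{ odd}}e^{-nL}/n$, collecting $e^{\pm L}$ into even powers.
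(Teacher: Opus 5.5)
Your proposal is correct and follows the paper's proof essentially step for step: you use the same expansion of $\sigma_j$ into sign sums, the same case split $m\ge2$, $m=1$, $m=0$ with Lemma~\ref{lem:vanishing} to discard the modes with negative exponents, and the same computation of $\partial_{\v_*}$ on the coordinates $a_i$. The only difference is that you verify the continuity and Laplace-summability hypotheses of Corollary~\ref{cor:recipe} directly from the Gaussian bounds on $\Hk_0,\Hk_1$ and the $\O(m^6)$ coefficient growth, whereas the paper cites Lemma~\ref{lem:convergence}(a) and~(c), whose proof is that same estimate.
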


\begin{proof}
Expanding each $\cosh a_i$ and $\sinh a_i$ in \eqref{eq:F3} gives $\sigma_j=\tfrac18\sum_\epsilon s_j(\epsilon)e^{\epsilon\cdot a}$, so by \eqref{eq:bexpansion} and \eqref{eq:cab},
\[
 F=\tfrac18\sum_{m\geq0}\sum_\epsilon\bigl(c^a_{m,\epsilon}+c^b_{m,\epsilon}a_4\bigr)e^{-X_{m,\epsilon}} .
\]
The prefactors are $1$ and $a_4$, homogeneous of degrees $0$ and $1$, so Corollary~\ref{cor:recipe} yields \eqref{eq:U3} with $\Hk_0$ and $\Hk_1$; its hypotheses of continuity in $\tau$ and summability of the transforms are Lemma~\ref{lem:convergence}(a) and~(c). For the exponents: if $m\geq2$ then $\epsilon\cdot a\leq a_1+a_2+a_3\leq3a_4\leq2ma_4$. If $m=1$ then the only sign vector with $\epsilon\cdot a>2a_4$ possible is $\epsilon=(1,1,1)$, whose coefficients vanish by Lemma~\ref{lem:vanishing}; for the other seven, $\epsilon\cdot a\leq2a_4$ because at most two of the $a_i$ enter with a $+$ sign and each is at most $a_4$. If $m=0$ then only $\epsilon=(-1,-1,-1)$ has a nonzero coefficient, and then $X_{0,\epsilon}=a_1+a_2+a_3=y_1\geq0$. Finally the direction $\v_*$ moves $y$ along $k(1,-1,1,0)$, so $\partial_{\v_*}a_1=\partial_{\v_*}a_2=\partial_{\v_*}a_4=0$ and $\partial_{\v_*}a_3=k$.
\end{proof}

The lowest modes are worth recording. By Lemma~\ref{lem:vanishing}, at $m=0$ the only mode is $c^a=4$ at $\epsilon=(-,-,-)$, with $X_{0,\epsilon}=y_1$; at $m=1$ one has $c^a=\tfrac43$ at the three $\epsilon$ with one $+$ sign; at $m=2$, $c^a=-\tfrac{12}5$ at $(-,-,-)$ and $c^a=-\tfrac4{15}$ at the three $\epsilon$ with two $+$ signs; and at $m=3$ the linear prefactor first appears, with $c^b=\tfrac{96}7$ at $\epsilon=(-,-,-)$.

\subsection{Regions I and II}\label{sec:region12}

In Regions I and II the stationary solution is $u=x_1+\tfrac1kF$ with
\begin{equation}\label{eq:F12}
 F=F_4(z_1,z_2,z_3)+\Delta F,
 \qquad
 \Delta F=\int_{z_1}^\infty\sinh t\,p(t)\,e^{-2z_4\coth t}\,
      \Phi_t(z_1)\Phi_t(z_2)\Phi_t(|z_3|)\dd,
\end{equation}
where $\Phi_t(X)=\sinh(t-X)/\sinh t$ for $0\leq X\leq t$ and $0$ otherwise, and $p$ is the density \eqref{eq:pdef}~\cite[Theorem~2.1]{CD26}. The background $F_4$ was treated in Section~\ref{sec:four}; it remains to expand $\Delta F$.

A remark on the coordinates before we begin. The formula \eqref{eq:F12} involves $|z_3|$ and $z_4=y_4-\max(z_3,0)$, which are only piecewise linear in $x$: they are linear on Region~I, where $z_3\leq0$ and $z_4=y_4$, and linear on Region~II, where $z_3\geq0$ and $z_4=y_4-z_3$. Everything below is therefore carried out on each of the two open regions separately, so that the hypotheses of Lemma~\ref{lem:mode}, linear exponents and homogeneous prefactors, are met. The two resulting expressions agree across the interface $z_3=0$, because the correction depends on $z_3$ only through $|z_3|$ and because $z_4=y_4$ there. We first put the density in exponential form.

\begin{lemma}\label{lem:p}
With $w=e^{-2t}$ and $\Pi(w)=1-14w-94w^2-14w^3+w^4$, for $t>0$,
\begin{equation}\label{eq:prational}
 p(t)=\frac{w\,\Pi(w)}{2(1+w)(1-w)^5}+\frac{60\,t\,w^3}{(1-w)^6}.
\end{equation}
\end{lemma}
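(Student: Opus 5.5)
The plan is a direct computation: evaluate $I$ in closed form, rewrite every hyperbolic function of $t$ as a rational function of $w=e^{-2t}$, and compare. First I would obtain the closed form of $I$, which the proof of Proposition~\ref{prop:trace} already refers to, by reducing the integrand to double angles. Since $\sinh^2s\cosh^2s=\tfrac14\sinh^2(2s)$ and $\sinh^2s=\tfrac12(\cosh 2s-1)$,
\[
 \sinh^4s\cosh^2s=\tfrac18\bigl(\sinh^2(2s)\cosh(2s)-\sinh^2(2s)\bigr),
\]
and integrating from $0$ to $t$ gives
\[
 I(t)=\frac{\sinh^3(2t)}{48}-\frac{\sinh(4t)}{64}+\frac{t}{16}.
\]
This is the only place where a non-exponential term can appear, and it is the linear term $t/16$.

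Next I would substitute $w=e^{-2t}$, which lies in $(0,1)$ for $t>0$. One has $\sinh t=e^{t}(1-w)/2$, so $\sinh^6t=(1-w)^6/(64w^3)$. Also
\[
 \tanh t=\frac{1-w}{1+w},\quad \coth t=\frac{1+w}{1-w},\quad \sinh 2t=\frac{1-w^2}{2w},\quad \sinh 4t=\frac{1-w^4}{2w^2}.
\]
The linear part of $I$ contributes
\[
 \frac{15\,t}{16}\cdot\frac{64w^3}{(1-w)^6}=\frac{60\,t\,w^3}{(1-w)^6}.
\]
This is exactly the second term of \eqref{eq:prational}, so the $t$-dependent parts match.

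It remains to check that the remaining terms,
\[
 \tfrac12\tanh t-3\coth t+\frac{15\cdot64w^3}{(1-w)^6}\Bigl(\frac{(1-w^2)^3}{384w^3}-\frac{1-w^4}{128w^2}\Bigr),
\]
equal $w\Pi(w)/\bigl(2(1+w)(1-w)^5\bigr)$. I would put everything over the common denominator $2(1+w)(1-w)^6$ and reduce the claim to a polynomial identity of degree at most $7$ in $w$. After simplification the numerator must equal $w(1-w)\Pi(w)$, and I would confirm this by expanding or by comparing coefficients.

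The only real obstacle is bookkeeping in this final identity, where sign or power-of-$2$ slips are easy. As cross-checks I would use three facts. The large-$t$ limit gives $p\to\tfrac12-3=-\tfrac52$ as $w\to0$ from the original form; the constant term must then be exactly cancelled by the $15I/\sinh^6$ contribution, since the right side of \eqref{eq:prational} vanishes at $w=0$. The regularity of $p$ at $t=0$, that is $w\to1$, where the pole of order $6$ must cancel against the $t$ term. Finally, a numerical evaluation at one value of $t$.
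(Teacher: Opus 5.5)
Your proposal is correct and is essentially the paper's proof. The steps match: the closed form of $I$ (your $\tfrac1{48}\sinh^3 2t$ equals the paper's $\tfrac1{192}\sinh 6t-\tfrac1{64}\sinh 2t$ by the triple-angle formula), the substitution $w=e^{-2t}$, isolating the linear term $60tw^3/(1-w)^6$, and a final polynomial identity. Over your denominator $2(1+w)(1-w)^6$ that identity is the paper's $(1-w)^6-6(1+w)^2(1-w)^4+5(1+w)(1-2w-5w^2-5w^3-2w^4+w^5)=w\,\Pi(w)$ multiplied by $1-w$, and it holds coefficient by coefficient. One small slip in your cross-checks: as $w\to0$ it is $\tfrac12\tanh t-3\coth t$, not $p$, that tends to $-\tfrac52$, while $p$ itself tends to $0$.
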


\begin{proof}
Using $\sinh(2\ell t)=\tfrac12e^{2\ell t}(1-w^\ell)$ and $\sinh^6t=\tfrac1{64}e^{6t}(1-w)^6$ in $I(X)=\tfrac1{192}\sinh6X-\tfrac1{64}\sinh4X-\tfrac1{64}\sinh2X+\tfrac X{16}$ gives
\[
 \begin{aligned}
 \frac{15I(t)}{\sinh^6t}
 &=\frac{\tfrac52\bigl(1-3w-3w^2+3w^4+3w^5-w^6\bigr)+60tw^3}{(1-w)^6}\\
 &=\frac{5\bigl(1-2w-5w^2-5w^3-2w^4+w^5\bigr)}{2(1-w)^5}
   +\frac{60tw^3}{(1-w)^6},
 \end{aligned}
\]
the last step by dividing the sextic by $1-w$. Adding $\tfrac12\tanh t=\tfrac{1-w}{2(1+w)}$ and $-3\coth t=-\tfrac{3(1+w)}{1-w}$ over the common denominator $2(1+w)(1-w)^5$ produces the numerator
\[
 (1-w)^6-6(1+w)^2(1-w)^4+5(1+w)\bigl(1-2w-5w^2-5w^3-2w^4+w^5\bigr)
 =w\,\Pi(w),
\]
which is \eqref{eq:prational}.
\end{proof}

\begin{lemma}\label{lem:dF}
For $z_1>0$, $z_1\geq z_2\geq|z_3|\geq0$, and $z_4\geq0$,
\begin{equation}\label{eq:dFseries}
 \Delta F=\frac12\sum_{\epsilon_2,\epsilon_3\in\{\pm1\}}\epsilon_2\epsilon_3
   \sum_{j\geq0}\frac{(-4z_4)^j}{j!}\sum_{n\geq j+2}
   \Bigl[\alpha_{j,n}(c)+\beta_{j,n}(c)\,z_1\Bigr]e^{-X_{j,n,\epsilon}},
\end{equation}
with $c$, $X_{j,n,\epsilon}$, $\alpha_{j,n}$ and $\beta_{j,n}$ as in \eqref{eq:cD12}--\eqref{eq:ab12}. All exponents satisfy $X_{j,n,\epsilon}\geq2z_1>0$, and $\partial_{\v_*}X_{j,n,\epsilon}=\epsilon_2k$ while $\partial_{\v_*}z_1=\partial_{\v_*}z_4=0$.
\end{lemma}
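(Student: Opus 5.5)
The plan is to expand every factor of the integrand of \eqref{eq:F12} in powers of $w=e^{-2t}$, integrate term by term in $t$, and reorganize the result into the modes $e^{-X_{j,n,\epsilon}}$.

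\emph{Step 1: expansion of the integrand.} For $0\leq X\leq t$,
\[
 \Phi_t(X)=\frac{e^{-X}-w\,e^{X}}{1-w}.
\]
Since $z_1\leq t$ on the range of integration and $z_2,|z_3|\leq z_1$, all three cutoffs $\Phi_t$ are active there. Next, $\coth t=1+\tfrac{2w}{1-w}$, so
\[
 e^{-2z_4\coth t}=e^{-2z_4}\sum_{j\geq0}\frac{(-4z_4)^j}{j!}\,\frac{w^j}{(1-w)^j}.
\]
With $\sinh t=\tfrac12e^t(1-w)$ and Lemma~\ref{lem:p}, a one-line computation gives
\[
 \sinh t\,p(t)\,\frac{w^j}{(1-w)^{j+3}}=\tfrac12e^{3t}\sum_{n\geq0}\bigl(r_{j,n}+t\,s_{j,n}\bigr)w^n,
\]
by the definition \eqref{eq:rs}. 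The shift from $w^{j+1}$ to $w^{j+2}$ is absorbed by $e^t=e^{3t}w$. Because of the prefactors $w^{j+2}$ and $w^{j+4}$ in \eqref{eq:rs}, $r_{j,n}=s_{j,n}=0$ for $n<j+2$. This is the origin of the lower limit $n\geq j+2$.

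\emph{Step 2: the $z_2,z_3$ factors and the $t$-integral.} I will expand
\[
 (e^{-z_2}-we^{z_2})(e^{-|z_3|}-we^{|z_3|})
\]
into four terms indexed by $\epsilon_2,\epsilon_3$. The term $e^{-z_2}$ corresponds to $\epsilon_2=+1$. The term $-we^{z_2}$ corresponds to $\epsilon_2=-1$, and it carries a sign and an extra power of $w$. Reindexing $n$ so that the total $w$-power is uniform, each of the four terms becomes
\[
 \epsilon_2\epsilon_3\,e^{-\epsilon_2z_2-\epsilon_3|z_3|}\,e^{-ct},
 \qquad c=2n-\epsilon_2-\epsilon_3,
\]
up to a common factor, multiplied by the remaining factor $e^{-z_1}-e^{-2t}e^{z_1}$. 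The $t$-integrals are then
\[
 \int_{z_1}^\infty e^{-ct}\bigl(e^{-z_1}-e^{z_1-2t}\bigr)\dd
 =e^{-(c+1)z_1}\Bigl(\tfrac1{c}\cdots\Bigr),
\]
and, after matching the exact exponent with $cz_1$ in \eqref{eq:cD12}, the difference of the two geometric integrals is $e^{-cz_1}\cdot\tfrac{2}{c^2-1}$. The $t$-weighted integrals follow by applying $-\partial_c$ to this. That derivative produces $z_1\cdot\tfrac{2}{c^2-1}$ from the lower limit and $\tfrac{4c}{(c^2-1)^2}$ from the $c$-dependence. These are exactly $\alpha_{j,n}(c)$ and $\beta_{j,n}(c)$ of \eqref{eq:ab12}, and the factor $e^{-2z_4}$ supplies the term $2z_4$ in $X_{j,n,\epsilon}$. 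The exponent bookkeeping, namely which power of $e^{\pm t}$ pairs with which $c$, is the routine part. I would fix it by checking the $j=0$, $n=2$ term against a direct expansion.

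\emph{Step 3: justification, exponents, directional derivatives.} I expect the main obstacle to be interchanging the $t$-integral with the double sum over $j,n$. I would use dominated convergence. Coefficientwise, $|r_{j,n}|,|s_{j,n}|\leq C\binom{n+8}{j+8}$, which follows from the denominators $(1-w)^{j+8}$ in \eqref{eq:rs}. Summing absolute values in $j$ therefore reconstructs
\[
 e^{-2z_4}\,e^{4z_4w/(1-w)}\,|\text{rational in }w|,
\]
which is integrable against $e^{3t}w^{n}$ for $t\geq z_1>0$, because $w\leq e^{-2z_1}<1$. The absolute series is thus the integral of a convergent majorant.

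For the exponents, $n\geq2$ gives $c\geq2n-2$, hence
\[
 X_{j,n,\epsilon}\geq(2n-2)z_1-z_2-|z_3|+2z_4\geq(2n-4)z_1+\dots;
\]
to get $\geq2z_1$ I will use $c\,z_1+\epsilon_2z_2\geq(c-1)z_1$ and the precise ranges $n\geq j+2\geq2$, together with $z_4\geq0$. Finally, as in the proof of Lemma~\ref{lem:F4expansion}, $\v_*$ changes only $z_2$, at rate $k$. It leaves $z_1$, $z_3$ and $y_4$ unchanged, so $z_4$ is also unchanged. This gives $\partial_{\v_*}X_{j,n,\epsilon}=\epsilon_2k$ and $\partial_{\v_*}z_1=\partial_{\v_*}z_4=0$.
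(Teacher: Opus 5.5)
Your route is the paper's. You expand the three factors in $w=e^{-2t}$, integrate termwise over $[z_1,\infty)$, and combine the two $z_1$-branches into $\frac1{c-1}-\frac1{c+1}$ and its $c$-derivative. Keeping $e^{-z_1}-we^{z_1}$ unexpanded and getting the $t$-weighted integral by $-\partial_c$ is equivalent to the paper's sum over $\epsilon_1$.

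The bookkeeping you defer in Step~2 is wrong as displayed. No reindexing of $n$ is needed, and one would break the pairing in $\alpha_{j,n}(c)$ of $r_{j,n}$ with $c=2n-\epsilon_2-\epsilon_3$ for the same $n$. With that $n$, $\tfrac12e^{3t}w^n\cdot w^{1-(\epsilon_2+\epsilon_3)/2}=\tfrac12e^{-(c-1)t}$. The ``common factor'' $e^{t}$ you set aside depends on the integration variable, so it cannot be pulled out of the integral. Integrating $e^{-ct}$ instead gives $\frac{2}{c(c+2)}e^{-(c+1)z_1}$, and no matching of exponents turns that into the claimed value. With the correct integrand,
\[
 \int_{z_1}^\infty e^{-(c-1)t}\bigl(e^{-z_1}-e^{z_1-2t}\bigr)\,dt=e^{-cz_1}\Bigl(\frac1{c-1}-\frac1{c+1}\Bigr)=\frac{2e^{-cz_1}}{c^2-1},
\]
and your $-\partial_c$ step then yields $\alpha_{j,n}$ and $\beta_{j,n}$ exactly.

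The genuine gap is the bound $X_{j,n,\epsilon}\geq2z_1$. The inequalities you propose give only $X_{j,n,\epsilon}\geq(c-2)z_1\geq(2n-4)z_1$. This holds whether you use $c\geq2n-2$ together with $\epsilon_2z_2+\epsilon_3|z_3|\geq-z_2-|z_3|$, or $cz_1+\epsilon_2z_2\geq(c-1)z_1$. That lower bound is $0$ for the mode $n=2$, $\epsilon_2=\epsilon_3=1$, $c=2$. This is exactly the mode that makes $2z_1$ sharp, and there the term you discarded, $\epsilon_2z_2+\epsilon_3|z_3|=z_2+|z_3|$, is nonnegative. The missing observation is that $c$ is large exactly when the signs are negative:
\[
 X_{j,n,\epsilon}=2nz_1-\epsilon_2(z_1-z_2)-\epsilon_3(z_1-|z_3|)+2z_4\ \geq\ (2n-2)z_1+z_2+|z_3|\ \geq\ 2z_1 .
\]
This uses $z_1-z_2\geq0$, $z_1-|z_3|\geq0$, $z_4\geq0$ and $n\geq2$. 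It is equivalent to the paper's case split: $X\geq cz_1\geq2z_1$ when $\epsilon_2=\epsilon_3=1$, and otherwise $c\geq2n\geq4$ and $X\geq(c-2)z_1$.

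Your domination argument for the interchange goes beyond what the paper spells out. It only works if you keep the vanishing for $n<j+2$, so that the majorant retains $e^{3t}w^{j+2}\leq e^{-t}$. Summing $C\binom{n+8}{j+8}w^n$ over all $n$ and $j$ instead leaves a non-integrable factor $e^{3t}$. A clean majorant comes from two bounds:
\[
 \sum_n|s_{j,n}|w^n=\frac{60w^{j+4}}{(1-w)^{j+8}},\qquad
 \sum_n|r_{j,n}|w^n\leq\frac{w^{j+2}\,(1+14w+94w^2+14w^3+w^4)}{2(1-w)^{j+8}}\quad(0\leq w<1),
\]
followed by the $j$-sum, which produces $e^{4z_4w/(1-w)}$, bounded for $w\leq e^{-2z_1}$.
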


\begin{proof}
Write $d=|z_3|$ and expand the three ingredients of the integrand in $w=e^{-2t}$. First,
\[
 \sinh t\,\Phi_t(z_1)\Phi_t(z_2)\Phi_t(d)
 =\frac{\prod_{i}\sinh(t-\zeta_i)}{\sinh^2t}
 =\frac12\sum_{\epsilon\in\{\pm1\}^3}\Bigl(\prod_i\epsilon_i\Bigr)e^{-\epsilon\cdot\zeta}
   \,\frac{e^{\sigma t}w}{(1-w)^2},
\]
where $\zeta=(z_1,z_2,d)$ and $\sigma=\epsilon_1+\epsilon_2+\epsilon_3$, using $\prod_i\sinh(t-\zeta_i)=\tfrac18\sum_\epsilon(\prod\epsilon_i)e^{\sigma t}e^{-\epsilon\cdot\zeta}$ and $\sinh^{-2}t=4w(1-w)^{-2}$. Second, $\coth t=1+2w/(1-w)$ gives
\[
 e^{-2z_4\coth t}=e^{-2z_4}\sum_{j\geq0}\frac{(-4z_4)^j}{j!}\frac{w^j}{(1-w)^j},
\]
which converges absolutely for $t\geq z_1>0$. Third, Lemma~\ref{lem:p} gives
\[
 \frac{w^{1+j}\,p(t)}{(1-w)^{2+j}}
 =\frac{w^{j+2}\,\Pi(w)}{2(1+w)(1-w)^{j+7}}+\frac{60\,t\,w^{j+4}}{(1-w)^{j+8}}
 =\sum_{n}\bigl(r_{j,n}+t\,s_{j,n}\bigr)w^n .
\]
Since $e^{\sigma t}w^n=e^{-\nu t}$ with $\nu=2n-\sigma$, and $\nu\geq2(j+2)-3>0$ on the support of $r_{j,\cdot}$, the $t$-integrals are elementary:
\[
 \int_{z_1}^\infty e^{-\nu t}\dd=\frac{e^{-\nu z_1}}\nu,
 \qquad
 \int_{z_1}^\infty t\,e^{-\nu t}\dd=\frac{e^{-\nu z_1}}\nu\Bigl(z_1+\frac1\nu\Bigr).
\]
Assembling, the $\epsilon$-term carries the exponential $e^{-\nu z_1-\epsilon\cdot\zeta-2z_4}$, that is $e^{-X_{j,n,\epsilon}}$ with $X_{j,n,\epsilon}=(\nu+\epsilon_1)z_1+\epsilon_2z_2+\epsilon_3d+2z_4$. Because $\nu+\epsilon_1=2n-\epsilon_2-\epsilon_3=c$, the exponent does not depend on $\epsilon_1$, so the two branches $\nu=c\mp1$ may be summed first. With the signs $\prod_i\epsilon_i=\epsilon_1\epsilon_2\epsilon_3$,
\[
 \sum_{\epsilon_1=\pm1}\frac{\epsilon_1}{c-\epsilon_1}=\frac2{c^2-1},
 \qquad
 \sum_{\epsilon_1=\pm1}\frac{\epsilon_1}{(c-\epsilon_1)^2}=\frac{4c}{(c^2-1)^2},
\]
which give exactly \eqref{eq:ab12} and \eqref{eq:dFseries}.

For the sign of the exponents, recall that $c=2n-\epsilon_2-\epsilon_3$ with $n\geq2$. If $\epsilon_2=\epsilon_3=1$ then $X_{j,n,\epsilon}\geq c\,z_1\geq2z_1$. Otherwise $c\geq2n\geq4$, and $z_2\leq z_1$, $|z_3|\leq z_1$, $z_4\geq0$ give
\[
 X_{j,n,\epsilon}\geq c\,z_1-z_2-|z_3|+2z_4\geq(c-2)z_1\geq2z_1 .
\]
Finally $\partial_{\v_*}z_1=\partial_{\v_*}z_3=\partial_{\v_*}z_4=0$ and $\partial_{\v_*}z_2=k$.
\end{proof}

\begin{proposition}\label{prop:region12}
In Regions I and II with $z_1>0$, for $\tau>0$, the candidate \eqref{eq:inversion} is given by \eqref{eq:U12}.
\end{proposition}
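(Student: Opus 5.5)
The proof applies Corollary~\ref{cor:recipe} to the mode expansion of $F=F_4+\Delta F$ on each of the two open regions separately. On Region~I and on Region~II the coordinates $z_1,z_2,|z_3|,z_4$ are linear in $x$, so the recipe's hypotheses make sense there.

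First we check the structural hypotheses. For the background, Lemma~\ref{lem:F4expansion} gives the expansion \eqref{eq:F4series}. Its prefactors are constants, so $p_\iota=0$ and $\Hk_0$ is the kernel. Its exponents are nonnegative, and they satisfy $\partial_{\v_*}X^{(4)}_{m,\epsilon}=-\epsilon_2k$. The five-expert $\v_*$ has zero fifth entry, so these facts hold verbatim in five dimensions.

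For the correction, Lemma~\ref{lem:dF} gives \eqref{eq:dFseries}. We rewrite each term as $C\,P(x)e^{-X}$ with
\[
 P=\frac{(-4)^j}{j!}z_4^{\,j}\quad\text{or}\quad P=\frac{(-4)^j}{j!}z_4^{\,j}z_1 .
\]
These are homogeneous of degree $j$ and $j+1$, and they satisfy $\partial_{\v_*}P=0$ because $\partial_{\v_*}z_1=\partial_{\v_*}z_4=0$. The exponents satisfy $X_{j,n,\epsilon}\geq2z_1>0$, so kernels of arbitrary order $\Hk_j,\Hk_{j+1}$ are allowed. We also have $\partial_{\v_*}X=\epsilon_2k$. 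Corollary~\ref{cor:recipe} then produces exactly \eqref{eq:U12}, with the factor $\tfrac1{2k}$ coming from the $\tfrac12$ in \eqref{eq:dFseries}. Both representations agree on the interface $z_3=0$, as remarked before Lemma~\ref{lem:p}.

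The analytic hypotheses are convergence for each $\tau$, continuity in $\tau$, and $\sum_\iota\int_0^\infty e^{-\lambda\tau}|C_\iota P_\iota\Hk_{p_\iota}|\,d\tau<\infty$. We verify them at a fixed point with $z_1>0$.

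For the background we repeat the proof of Proposition~\ref{prop:four}. We use the bounds $0\leq\Hk_0\leq2\sqrt{\tau/\pi}\,e^{-X^2/(4\tau)}$ and $X^{(4)}_{m,\epsilon}\geq2(m-1)z_1$, together with the fact that $\Hk_0\geq0$ has transform $\lambda^{-3/2}e^{-X\sqrt\lambda}$. This summability statement is Lemma~\ref{lem:convergence}(c).

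For the correction we first bound the coefficients. The generating functions \eqref{eq:rs} have radius one, so $|r_{j,n}|,|s_{j,n}|\leq C\binom{n+j+7}{j+7}$, which is polynomial in $n$ for fixed $j$. Since $c\geq2$, we get $|\alpha_{j,n}|,|\beta_{j,n}|\lesssim|r_{j,n}|+|s_{j,n}|$.

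Next we bound the kernels. Lemma~\ref{lem:Hgrowth} controls the kernels through \eqref{eq:Hfactorial}, and the exponent satisfies $X_{j,n,\epsilon}\geq(c-2)z_1\gtrsim nz_1$. For convergence at fixed $\tau$, the factor $e^{-X^2/(8\tau)}$ beats the polynomial growth in $n$. The bound \eqref{eq:Hfactorial} is superexponentially small in $j$, and this beats the binomial growth $\binom{n+j+7}{j+7}\leq2^{n+j+7}$.

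For the Laplace summability it is cleaner to work on the transform side. Each term contributes
\[
 \int_0^\infty e^{-\lambda\tau}|\Hk_j(X,\tau)|\,d\tau .
\]
We bound this using the Gaussian bound, which gives roughly $\tau^{-(j-1)/2}c_0^jj^{j/2}e^{-X^2/(8\tau)}$. Integrating with the standard Bessel-type estimate $\int_0^\infty\tau^{-a}e^{-\lambda\tau-X^2/(8\tau)}\,d\tau\lesssim\Gamma$-factors$\cdot X^{1-a}\cdots e^{-X\sqrt{\lambda/2}}$, we obtain something like $C^jj^{j/2}X^{-(j-3)}e^{-cX\sqrt\lambda}$. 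Multiplying by $|z_4|^j/j!$ and using $X\gtrsim nz_1$, the resulting double sum over $(j,n)$ converges.

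Continuity in $\tau$ follows from local uniform convergence on compact $\tau$-intervals, by the same bounds.

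The main obstacle is this summability for the correction, uniformly over the double index $(j,n)$. The risk is that $j^{j/2}$ from the kernel bound combines with $X^{-j}$ and with the binomial growth of $r_{j,n},s_{j,n}$ in a way that fails to converge when $z_4$ is large relative to $z_1$. If the direct estimate breaks there, the fallback is to sum over $j$ first inside the integral, returning to the form $e^{-2z_4\coth t}$ of \eqref{eq:F12}. One would then apply the transform to $\Hk$-kernels at $X=c z_1+\cdots+2z_4\coth t$ with $t\geq z_1$, where positivity is manifest. This would bypass the factorial bookkeeping, and this is the form the paper cites as Lemma~\ref{lem:convergence}(c).
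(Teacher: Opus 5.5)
Your skeleton is the paper's. The paper's proof applies Corollary~\ref{cor:recipe} to Lemmas~\ref{lem:F4expansion} and~\ref{lem:dF}, with these checks:
$z_4^j$ and $z_4^jz_1$ are homogeneous of degrees $j$ and $j+1$ and annihilated by $\partial_{\v_*}$, and the exponents are positive wherever kernels of order at least three occur. It then cites Lemma~\ref{lem:convergence}(b) and~(c) for continuity in $\tau$ and for summability of the transforms. Your structural checks and your fixed-$\tau$ convergence argument are fine.

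The gap is in your own proof of transform summability for the correction, the one nontrivial analytic hypothesis, which does not close as written. Your Bessel-type estimate drops a Gamma-function factor: for fixed $X,\lambda$ and $j\to\infty$,
\[
 \int_0^\infty\tau^{-(j-1)/2}e^{-\lambda\tau-X^2/(8\tau)}\,d\tau\sim\Gamma\bigl(\tfrac{j-3}2\bigr)\Bigl(\frac{8}{X^2}\Bigr)^{(j-3)/2}.
\]
So \eqref{eq:Hgrowth} gives only $\int_0^\infty e^{-\lambda\tau}|\Hk_j(X,\tau)|\,d\tau\lesssim C^j\,j!\,X^{3-j}e^{-X\sqrt{\lambda/8}}$. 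The $1/j!$ in $(4z_4)^j/j!$ is used up, and the superexponential decay in $j$ you rely on is not there.

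What remains must be controlled by the coefficients, and your only $j$-uniform bound, $2^{n+j+7}$, is too weak. After integrating in $\tau$ the only decay in $n$ is $e^{-cX\sqrt\lambda}\le e^{-c(2n-2)z_1\sqrt\lambda}$. This does not beat $2^n$ when $z_1\sqrt\lambda$ is small; the proof of Lemma~\ref{lem:convergence}(c) makes exactly this remark.

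Your fallback does not repair this, and it is not what Lemma~\ref{lem:convergence}(c) does. Resumming in $j$ does not produce kernels at fixed nonnegative arguments with positive weights. With $t$ fixed, the endpoint $\sqrt\lambda\,z_1$ moves and $\prod_i\Phi_t(\sqrt\lambda\,\zeta_i)$ expands into exponentials of mixed signs. Rescaling $t$ instead puts $\lambda$ inside $\coth(\sqrt\lambda\,t)$. In any case Corollary~\ref{cor:recipe} needs absolute summability of the terms of \eqref{eq:U12} itself.

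The missing idea is that the coefficients carry their own factor $1/j!$, against a power of $n$ that the exponent absorbs. From $s_{j,n}=60\binom{n+3}{j+7}$ and the analogous binomial form of $r_{j,n}$ one gets $|\alpha_{j,n}(c)|+|\beta_{j,n}(c)|\le C(n+4)^{j+7}/j!$. Your own bound also suffices if used as $\binom{n+j+7}{j+7}\le(2n+5)^{j+7}/(j+7)!$, which is legitimate because the sum runs over $n\ge j+2$.

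Since $X_{j,n,\epsilon}\ge\max(2,2n-2)z_1\ge(n+4)z_1/3$, the factor $(n+4)^j$ cancels $X^{-j}$. The $(j,n,\epsilon)$ term is then at most $C_\lambda(n+4)^7e^{-c(n-1)z_1\sqrt\lambda}(Cz_4/z_1)^j/j!$. The $j$-sum is at most $e^{Cz_4/z_1}$ and the $n$-sum converges, so large $z_4/z_1$ is harmless at any fixed point.

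The paper gets the kernel factor from Cauchy's estimate, $|\Hk_{i+2}(X,\tau)|\le i!\,(4/X)^i\,\Gk(X/\sqrt2,\tau)$, whose transform is explicit; your corrected Gaussian bound gives the same order. With this repair, or simply by citing Lemma~\ref{lem:convergence}(c) as the paper does, your proof coincides with the paper's.
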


\begin{proof}
Apply Corollary~\ref{cor:recipe} to \eqref{eq:F12} using Lemmas~\ref{lem:F4expansion} and~\ref{lem:dF}. The prefactor $z_4^j$ is homogeneous of degree $j$ and $z_4^jz_1$ of degree $j+1$, and both are annihilated by $\partial_{\v_*}$, which produces the kernels $\Hk_j$ and $\Hk_{j+1}$ respectively; their exponents are positive by Lemma~\ref{lem:dF}, as Corollary~\ref{cor:recipe} requires for the kernels of order at least three. The continuity in $\tau$ and the summability of the transforms that Corollary~\ref{cor:recipe} also asks for are Lemma~\ref{lem:convergence}(b) and~(c).
\end{proof}

The correction in \eqref{eq:U12} contains $z_3$ only through $|z_3|$, its background is the four-expert formula, valid on the whole four-expert sector, and at $z_3=0$ one has $z_4=y_4$ in both regions, so the single expression \eqref{eq:U12} serves Regions I and II: the two region-wise applications of Lemma~\ref{lem:mode} produce the same function, and the interface $z_3=0$ needs no separate matching. The interface between Regions II and III is $z_4=0$, that is $a_1=0$, where outside $\coll$ the two formulas \eqref{eq:U3} and \eqref{eq:U12} agree together with their first and second derivatives; this is a consequence of Theorem~\ref{thm:regularity}. At $z_1=0$ the first four experts are tied, and one must use the separate quadrature \eqref{eq:face} rather than substitute into the series, since the series converges only for $z_1>0$. We derive that quadrature now.

\begin{proposition}\label{prop:face}
On the top-four collision set $\coll$, where $x_1=x_2=x_3=x_4\geq x_5$ and $z_1=z_2=z_3=0$, the candidate \eqref{eq:inversion} is given by the single quadrature \eqref{eq:face}.
\end{proposition}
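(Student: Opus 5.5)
On $\coll$ we have $z_1=z_2=z_3=0$, and the point lies in Regions I/II, because $z_3=0$ and $z_4=y_4\geq0$. The plan is to evaluate the stationary solution there directly from \eqref{eq:F12}, writing it in a form whose dependence on $z_4$ is a superposition of exponentials, and then to invert the Laplace transform \eqref{eq:inversion} mode by mode in the continuous parameter $t$. This replaces the discrete sum used in Corollary~\ref{cor:recipe}.

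\emph{Step 1 (stationary value on $\coll$).} Set $z_1=z_2=z_3=0$ in \eqref{eq:F12}. Then $\Phi_t(0)=1$ for every $t>0$, so
\[
 \Delta F=\int_0^\infty\sinh t\,p(t)\,e^{-2z_4\coth t}\dd .
\]
The background \eqref{eq:F4} gives $F_4(0,0,0)=\theta(0)=\pi/4$. Hence on $\coll$
\[
 u=x_1+\tfrac1k\Bigl[\tfrac\pi4+\int_0^\infty\sinh t\,p(t)\,e^{-2z_4\coth t}\dd\Bigr].
\]
We must check that this integral converges. Near $t=0$ the factor $e^{-2z_4\coth t}$ kills everything when $z_4>0$, and when $z_4=0$ we need integrability of $\sinh t\,p(t)$ at $0$; Lemma~\ref{lem:p} gives $p(t)=\O(t)$ near $0$. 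At infinity, $\sinh t\,p(t)\approx\tfrac14 e^{t}\cdot p(t)$, and by \eqref{eq:prational} $p(t)=\O(e^{-2t})$, so the integrand decays like $e^{-t}$.

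\emph{Step 2 (scaling).} Under $x\mapsto\sqrt\lambda x$ the point stays in $\coll$ and $z_4$ scales to $\sqrt\lambda z_4$. So
\[
 \lambda^{-3/2}u(\sqrt\lambda x)=\lambda^{-1}x_1+\lambda^{-3/2}\tfrac{\pi}{4k}+\tfrac1k\int_0^\infty\sinh t\,p(t)\,\lambda^{-3/2}e^{-\sqrt\lambda\,(2z_4\coth t)}\dd .
\]
The first term inverts to $x_1$. The second inverts to $\tfrac{\pi}{4k}\cdot2\sqrt{\tau/\pi}=\sqrt{\pi\tau}/(2\sqrt2)$, which is the constant in \eqref{eq:face}. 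For the integral, each exponent $X(t)=2z_4\coth t\geq0$ is paired with the factor $\lambda^{-3/2}$, i.e.\ the case $j=0$ of \eqref{eq:Hdef}. That identity is valid for all $X\geq0$, including $X=0$ (Lemma~\ref{lem:kernels}). So pointwise in $t$ the transform of $\Hk_0(2z_4\coth t,\tau)$ is the integrand.

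\emph{Step 3 (Fubini).} This is the main, though mild, obstacle. We justify exchanging $\Lap$ with the $t$-integral by absolute integrability of
\[
 e^{-\lambda\tau}\sinh t\,|p(t)|\,\Hk_0(2z_4\coth t,\tau)
\]
over $(t,\tau)\in(0,\infty)^2$. The paper states that $p$ is positive, and $\Hk_0\geq0$, so Tonelli applies and it suffices that the iterated integral in the other order is finite. That iterated integral equals $\lambda^{-3/2}\int_0^\infty\sinh t\,p(t)e^{-2\sqrt\lambda z_4\coth t}\dd$, which is finite by Step 1. Using $\Hk_0\leq2\sqrt{\tau/\pi}$, the resulting function of $\tau$ is continuous by dominated convergence. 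Since a continuous function is determined by its Laplace transform, the candidate \eqref{eq:inversion} on $\coll$ is exactly \eqref{eq:face}.
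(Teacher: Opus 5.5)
Your proof is correct and follows essentially the paper's argument. You reduce \eqref{eq:F12} on $\coll$ to $\tfrac\pi4+\int_0^\infty\sinh t\,p(t)\,e^{-2z_4\coth t}\,dt$, use that $\coll$ is a cone on which $z_4$ scales, invert each exponential by the $j=0$ case of \eqref{eq:Hdef} (valid for $X\geq0$), and justify the interchange by Tonelli from $p>0$ and $\Hk_0\geq0$, which is how the paper does it in Proposition~\ref{prop:properties}(iv). The only slip is cosmetic: at infinity $\sinh t\approx\tfrac12e^{t}$, not $\tfrac14e^{t}$, so $\sinh t\,p(t)\approx\tfrac14e^{-t}$, which is the decay you actually use.
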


\begin{proof}
On $\coll$ one has $\Phi_t(z_1)=\Phi_t(z_2)=\Phi_t(|z_3|)=\Phi_t(0)=1$ and $F_4(0,0,0)=\theta(0)=\pi/4$, so \eqref{eq:F12} reduces to $F=\tfrac\pi4+\int_0^\infty\sinh t\,p(t)e^{-2z_4\coth t}\dd$. The set $\coll$ is a cone on which $z_4$ scales, so $\lambda^{-3/2}F(\sqrt\lambda\,\cdot)$ is $\tfrac\pi4\lambda^{-3/2}+\int_0^\infty\sinh t\,p(t)\lambda^{-3/2}e^{-\sqrt\lambda\,2z_4\coth t}\dd$, and $\Lap^{-1}[\lambda^{-3/2}]=2\sqrt{\tau/\pi}$ together with \eqref{eq:Hdef} gives \eqref{eq:face}; the exponent $2z_4\coth t$ is nonnegative, and the integral converges because $\sinh t\,p(t)=\tfrac1{14}t^2+\O(t^4)$ at zero and, by \eqref{eq:prational}, $\sinh t\,p(t)=\tfrac14e^{-t}+\O(e^{-3t})$ at infinity.
\end{proof}

Formula \eqref{eq:face} is a useful independent check. At $x=0$ it must reproduce Corollary~\ref{cor:constant}, which forces $\int_0^\infty\sinh t\,p(t)\dd=\tfrac{45\pi^2}{512}-\tfrac\pi4$; that is exactly $e(0)-e_4(0)$, in agreement with the Green representation of $e-e_4$ in the companion paper. Numerically both sides equal $0.08204753591704610024$ to $20$ digits, and \eqref{eq:face} returns $45\pi^{3/2}\sqrt\tau/(256\sqrt2)$ at $x=0$ for $\tau=1$ and $\tau=4$ to all digits computed.

%% file: verification.tex
\section{Verification}\label{sec:verification}

The derivation of Section~\ref{sec:derivation} produces a function $U$ that inverts the Laplace transform of the stationary solution mode by mode, but it does not prove that $U$ solves \eqref{eq:forward}. This section addresses this in several steps. We first prove in Section~\ref{sec:convergence} that the series converge, that $U$ solves the linear equation \eqref{eq:frozen} in each region, that it has the parabolic scaling, and that its Laplace transform is \eqref{eq:transform}. Section~\ref{sec:collision} proves that $U(\tau,\cdot)$ is $C^2$ on all of $\R^5$, including the collision set $\coll$ where the mode series degenerate, and that $|U-\phi|\leq C\sqrt\tau$, by continuing the stationary formulas to a sector of complex scaling parameters and inverting the Laplace transform on a Hankel contour; this is the regularity of a classical solution, and it removes every regularity question from the rest of the argument. Section~\ref{sec:transfer} shows that every curvature identity of the stationary solution transfers to $U$, which identifies the ties among the optimal controls, and locates the COMB equality set by a symmetry. Section~\ref{sec:hamiltonian} states the Hamiltonian inequalities $D^2_\v U\leq D^2_{\v_*}U$ in the two regional forms in which they are proved, explains the reduction to finitely many scalar sign certificates, and describes what is delegated to the computer. Section~\ref{sec:proofs} assembles the proofs of Theorems~\ref{thm:main} and~\ref{thm:comb}, extending the regional inequalities to every state by continuity.

Part of the verification is delegated to a computer, and the division of labor is as follows. The arguments that reduce the Hamiltonian inequalities to finite computations are given in full in the text and in Appendices~\ref{app:region3} and~\ref{app:region12}: the transform identity that turns each inequality into the complete monotonicity of a curvature gap of the stationary solution, the positive heat-exit interpolation from a cube or a square to its corners, and the positivity-preserving evolutions in the transverse variable. What remains after these reductions is finite and of two kinds. The first is a list of exact identities among rational functions and stored operator trees, among them the $64$ Hessian contractions in Region III, the $64$ corner decompositions in Regions I and II, and the partial fractions of the mode coefficients; these are verified by exact rational arithmetic. The second is the sign of $41$ explicit functions of one variable, each a lattice Gaussian series of the form \eqref{eq:profile} below with rational coefficients; their positivity is proved on the whole positive axis, by Poisson summation for small arguments, by first-mode domination for large ones, and by outward-rounded interval arithmetic on $1616$ cells in between, with a rigorous bound for the truncated tail. Numerical quadrature and sign sampling enter nowhere, and rounding is controlled by directed enclosure. These computations are carried out three times, on three trusted bases. The certificate scripts find the cell subdivisions and certify them with the arbitrary-precision interval arithmetic of mpmath, so their trusted base is the integer arithmetic of CPython, the symbolic algebra of SymPy for the identity audits, and the interval library. A separate checker re-derives every identity and every cell from the stored witnesses using only the Python standard library, in exact rational arithmetic with an interval exponential built from the Taylor series with outward rounding; no floating-point arithmetic enters this route, and its trusted base is CPython alone. The Lean formalization described at the end of Section~\ref{sec:hamiltonian} proves every enclosure from Mathlib's real numbers. In this sense the computer-assisted part of the proof is an exact computation with the same logical status as a long hand calculation. The supplement~\cite{CD26Supp} is a self-contained, versioned package that runs offline in a few minutes, and Section~\ref{sec:hamiltonian} says at each point which of its scripts establishes what. Independently of these scripts, the proofs of Theorems~\ref{thm:main} and~\ref{thm:comb}, the $1616$ interval certificates included, have been formalized in the Lean proof assistant; the end of Section~\ref{sec:hamiltonian} describes the scope of that formalization.

\subsection{Convergence and the transform identity}\label{sec:convergence}

We begin with the convergence of the two series. In Region III only the kernels $\Hk_0$ and $\Hk_1$ occur, and the argument is the one already used for four experts; in Regions I and II the order of the kernel is unbounded, and Lemma~\ref{lem:Hgrowth} is what makes the double series converge.

\begin{lemma}\label{lem:convergence}
\begin{enumerate}
\item[(a)] The Region III series \eqref{eq:U3} converges absolutely, and uniformly on sets where $a_4\geq\delta>0$, $|x|$ is bounded and $\tau$ lies in a compact subset of $(0,\infty)$. The same is true of the series obtained by differentiating term by term any number of times in $x$ and in $\tau$.
\item[(b)] The Regions I/II series \eqref{eq:U12} converges absolutely, and uniformly on sets where $z_1\geq\delta>0$, $|x|$ is bounded and $\tau$ lies in a compact subset of $(0,\infty)$. The same is true of the series obtained by differentiating term by term any number of times in $x$ and in $\tau$, the terms being differentiated as functions of the regional coordinates, which are linear in $x$ on each of Regions~I and~II.
\item[(c)] Let $\lambda>0$. At every $x$ of Region III with $a_4>0$, and at every $x$ of Regions I and II with $z_1>0$, the terms $T_\iota$ of the series \eqref{eq:U3}, respectively \eqref{eq:U12}, satisfy
\[
 \sum_\iota\int_0^\infty e^{-\lambda\tau}\bigl|T_\iota(\tau,x)\bigr|\,d\tau<\infty .
\]
\end{enumerate}
In (a) and (b) the majorants of the twice differentiated series are $\O(\tau^{-1/2})$ as $\tau\downarrow0$, uniformly in $x$ on the stated sets.
\end{lemma}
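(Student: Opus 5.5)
The plan is to build explicit majorants term by term, using three ingredients: the growth of the coefficients, lower bounds on the exponents, and the kernel bounds of Lemmas~\ref{lem:kernels} and~\ref{lem:Hgrowth}. By \eqref{eq:Hprops}, a term-by-term derivative of any order in $x$ or $\tau$ of a mode $P\,\Hk_j(X,\tau)$ is a finite sum of terms $Q\,\Hk_{j'}(X,\tau)$. Here $Q$ is a derivative of $P$ multiplied by powers of the components of the linear form $d$, and $j'\leq j+r$ for $r$ derivatives (each $\tau$-derivative counting twice). So it suffices to bound the basic series with $\Hk_j$ replaced by $\Hk_{j+r}$ and the coefficients multiplied by polynomial factors in the indices.

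\textbf{Part (a).} First I would note the coefficient sizes. From \eqref{eq:AB}, \eqref{eq:JK} and \eqref{eq:alphabeta} one has $c^a_{m,\epsilon}=\O(m^5)$ and $c^b_{m,\epsilon}=\O(m^6)$. The components of $\nabla_x X_{m,\epsilon}$ are $\O(m)$. Next I need a lower bound on the exponents. From the proof of Proposition~\ref{prop:region3}, for $m\geq2$,
\[
X_{m,\epsilon}\geq 2ma_4-3a_4=(2m-3)a_4\geq (2m-3)\delta .
\]
The finitely many modes with $m\leq1$ are individually smooth for $\tau>0$.

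For $m\geq2$ I would use \eqref{eq:Hgrowth} with $j$ bounded by $r+1$:
\[
|\Hk_j(X,\tau)|\leq C_j\,\tau^{-(j-1)/2}e^{-X^2/(8\tau)} .
\]
On a compact $\tau$-interval this is summable against any polynomial in $m$, because $X\geq(2m-3)\delta$ makes $e^{-X^2/(8\tau)}$ decay like a Gaussian in $m$. For $j=0,1$ the same holds via $0\leq\Hk_0\leq 2\sqrt{\tau/\pi}\,e^{-X^2/(4\tau)}$ and $\Ek\leq e^{-X^2/(4\tau)}$.

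For the claim about small $\tau$ I would also use $\tau^{-a}e^{-X^2/(8\tau)}\leq C_a X^{-2a}e^{-X^2/(16\tau)}$. This keeps every $m\geq2$ term bounded as $\tau\downarrow0$. Only the finitely many low modes with $X$ possibly $0$ contribute the $\Hk_2$-type singularity $\O(\tau^{-1/2})$ after two $x$-derivatives.

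\textbf{Part (b).} The background series is handled exactly as in Proposition~\ref{prop:four}, using $X^{(4)}_{m,\epsilon}\geq2(m-1)z_1$. For the correction I would bound the rationals. From \eqref{eq:rs}, $|r_{j,n}|,|s_{j,n}|\leq C\binom{n+6}{j+7}n$, which grows at most like $n^{j+7}/(j+7)!$. Also $c\geq2$, so $|\alpha_{j,n}|+|\beta_{j,n}|\leq C(|r_{j,n}|+|s_{j,n}|)$. By Lemma~\ref{lem:dF}, $X_{j,n,\epsilon}\geq(c-2)z_1$, with $c\geq 2n-2$, and in all cases $X\geq 2z_1\geq2\delta$.

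I would combine the factorial factor $(4z_4)^j/j!$ with \eqref{eq:Hfactorial}, applied with $Z=z_4$ and order $j+1+r$. This gives a bound
\[
C\sqrt\tau\Bigl(\frac{C z_4}{\sqrt{j\tau}}\Bigr)^{j}\,e^{-X^2/(8\tau)} ,
\]
and then summing over $n$ a polynomial of degree $\sim j$ against $e^{-(2n-4)^2\delta^2/(8\tau)}$.

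This double sum is the main obstacle. The $n$-sum produces a factor like $(C\sqrt{\tau}/\delta)^{j}\,j^{j/2}$, which must be beaten by the $j^{-j/2}$ from \eqref{eq:Hfactorial}. To control it, I would split the Gaussian as $e^{-X^2/(16\tau)}e^{-X^2/(16\tau)}$. One half gives geometric decay in $n$. The other half absorbs the polynomial $n^{j+7}$ via $\sup_n n^{j}e^{-n^2\delta^2/(16\tau)}\leq (C j\tau/\delta^2)^{j/2}$. The resulting $j$-series is then dominated by $\sum_j (C z_4/\delta)^j/\sqrt{j!}$-type terms, which converge for bounded $z_4$.

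\textbf{Part (c).} This follows from the same majorants integrated against $e^{-\lambda\tau}$. The Gaussian factor $e^{-X^2/(8\tau)}$ with $X\geq c\,\delta$ makes each $\tau^{-(j-1)/2}$ integrable near $0$. The integral $\int_0^\infty e^{-\lambda\tau}\tau^{-a}e^{-X^2/(8\tau)}d\tau$ is a Bessel-type integral of size $\lesssim C_a^{\,}e^{-X\sqrt{\lambda/2}}X^{-2a+1}\cdots$. Summing over modes, this reduces to the absolute convergence of the stationary expansion at $\sqrt{\lambda/2}\,x$. Alternatively, and more simply, I can use the uniform-in-$\tau$ bounds near $0$, the growth $\O(\tau^{1/2})$ of the $\Hk_0$-terms at infinity, and the exponential weight $e^{-\lambda\tau}$.
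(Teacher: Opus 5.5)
Your part (a) is the paper's argument and your overall plan is sound. In (b) and (c) you take a different route from the paper, and one step in (c) needs to be made explicit.

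\textbf{Part (b).} You keep the sharp, $j$-dependent coefficient bound $|r_{j,n}|+|s_{j,n}|\leq C'n(n+6)^{j+7}/(j+7)!$. This forces you to couple the $j$- and $n$-sums by splitting the Gaussian. The paper instead uses the crude bound $|r_{j,n}|+|s_{j,n}|\leq C(n+1)2^n$. It is uniform in $j$, because every binomial coefficient with upper index at most $n+3$ is at most $2^{n+3}$. This decouples the sums completely: the $j$-sum converges by \eqref{eq:Hfactorial} alone, and the $n$-sum because $e^{-(2n-2)^2\delta^2/(8\tau)}$ beats $2^n$.

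Your version also works, but make the bookkeeping explicit. The $(j\tau)^{j/2}$ produced by the $n$-sum only cancels the $(j\tau)^{-j/2}$ of \eqref{eq:Hfactorial}, leaving a geometric factor $(Cz_4/\delta)^j$. It is the $1/(j+7)!$ in your coefficient bound that makes the $j$-series converge for every bounded $z_4$.

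You also do not address the $\O(\tau^{-1/2})$ claim in (b). It follows as in (a): all correction exponents are at least $2z_1\geq2\delta$, and the finitely many background modes whose exponents can vanish have constant prefactors.

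\textbf{Part (c).} The justification you call simpler is not sufficient as written. That each $\Hk_0$-term grows like $\tau^{1/2}$ says nothing about the mode sum, since as $\tau\to\infty$ the Gaussian factors stop giving decay in the mode index. What you need is a bound on $\sum_\iota|T_\iota(\tau,x)|$ that grows subexponentially in $\tau$.

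This is exactly where the paper's $2^n$ bound fails: the $n$-sum of $2^ne^{-(n-1)^2z_1^2/(2\tau)}$ grows like $e^{c\tau/z_1^2}$. That is why the paper switches in (c) to the sharper bound $C(n+4)^{j+7}/j!$ and to the Cauchy estimate $|\Hk_{i+2}(X,\tau)|\leq i!\,(4/X)^i\,\Gk(X/\sqrt2,\tau)$. Its transform $\lambda^{-1/2}e^{-X\sqrt{\lambda/2}}$ produces the decay in $n$ directly in the Laplace variable. Your Bessel-integral route is the same idea, but it needs precisely this $j$-uniform control of the constants $C_a$, which you do not supply.

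With your coefficient bound, the simpler route closes once you track the powers of $\tau$. Using $X_{j,n,\epsilon}\geq(2n-2)z_1$,
\[
 \sum_{n\geq j+2}n(n+6)^{j+7}e^{-(2n-2)^2z_1^2/(8\tau)}
 \leq C^j\,\Gamma\bigl(\tfrac{j+10}2\bigr)\Bigl[\bigl(\tau/z_1^2\bigr)^{(j+8)/2}+\bigl(\tau/z_1^2\bigr)^{(j+9)/2}\Bigr].
\]
After multiplication by the $\tau^{-(j-1)/2}$ of \eqref{eq:Hgrowth}, only the $j$-independent powers $\tau^{9/2}$ and $\tau^{5}$ remain, and likewise for the $\Hk_{j+1}$ terms. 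Meanwhile $\Gamma(\tfrac{j+10}2)\,j^{j/2}/(j!\,(j+7)!)$ still decays faster than any geometric sequence. The Region III and background majorants also grow only polynomially. Together these give $\sum_\iota|T_\iota(\tau,x)|\leq C(x)(1+\tau)^N$ for all $\tau>0$ and a fixed $N$. Tonelli then yields (c) with no Cauchy estimate at all. In your approach this polynomial growth is the real content of (c), and you must state and check it.
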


\begin{proof}
We first prove (a). By \eqref{eq:alphabeta} and \eqref{eq:AB} the coefficients satisfy $|c^a_{m,\epsilon}|+|c^b_{m,\epsilon}|\leq C(1+m)^6$, while $X_{m,\epsilon}\geq(2m-3)a_4$ for $m\geq2$ by the proof of Proposition~\ref{prop:region3}. Since $0\leq \Hk_0(X,\tau)\leq2\sqrt{\tau/\pi}e^{-X^2/(4\tau)}$ and $0\leq \Hk_1(X,\tau)\leq e^{-X^2/(4\tau)}$, the $m$-th group of terms is at most $C(1+m)^6(1+a_4)e^{-(2m-3)^2\delta^2/(4\tau)}$, which is summable and dominates uniformly on the stated sets. Differentiating a term $\ell$ times in $x$ and once in $\tau$ replaces $\Hk_0,\Hk_1$ by kernels of order at most $\ell+3$, with polynomial prefactors in $x$, and by \eqref{eq:Hgrowth} the bound $C(1+m)^{6+\ell}(1+|x|)^\ell\tau^{-(\ell+2)/2}e^{-X_{m,\epsilon}^2/(8\tau)}$ holds for these; for $m\geq2$ the Gaussian factor again gives a summable majorant. The finitely many modes with $m\leq1$ have exponents that are strictly positive except for the single mode $m=0$, $\epsilon=(-1,-1,-1)$, whose exponent is $y_1$ and whose coefficient $c^b$ vanishes; its second derivatives involve $\Hk_2=\Gk\leq(\pi\tau)^{-1/2}$ at most, which gives the $\O(\tau^{-1/2})$ statement.

We now prove (b). The four-expert background is the series of Lemma~\ref{lem:F4expansion} inverted term by term, and it and its differentiated series are treated as in (a), since $|c^{(4)}_{m,\epsilon}|\leq1/(2(2m+1))$, the exceptional $c^{(4)}_{0,(1,-1,-1)}=\tfrac12$ included, and $X^{(4)}_{m,\epsilon}\geq2(m-1)z_1$ for $m\geq1$. For the correction, expanding \eqref{eq:rs} by the binomial series gives $s_{j,n}=60\binom{n+3}{j+7}$ and, after writing $(1+w)^{-1}=(1-w)(1-w^2)^{-1}$,
\[
\begin{aligned}
 r_{j,n}=\frac12\sum_{\ell\geq0}\Bigl[&\binom{n+3-2\ell}{j+5}-14\binom{n+2-2\ell}{j+5}-94\binom{n+1-2\ell}{j+5}\\
 &-14\binom{n-2\ell}{j+5}+\binom{n-1-2\ell}{j+5}\Bigr],
\end{aligned}
\]
with $\binom NK=0$ for $N<K$, so $r_{j,n}$ is a sum of at most $\tfrac12(n+1)$ groups of five binomial coefficients $\binom{\cdot}{j+5}$ with upper index at most $n+3$ and weights of total size $1+14+94+14+1=124$; hence
\[
 |r_{j,n}|+|s_{j,n}|\leq C(n+1)2^{\,n} .
\]
Since $c=2n-\epsilon_2-\epsilon_3\geq2$, \eqref{eq:ab12} gives $|\alpha_{j,n}(c)|+|\beta_{j,n}(c)|\leq C(n+1)2^{\,n}$ as well. By \eqref{eq:Hfactorial}, applied to $\Hk_j$ with $Z=z_4$ and, when $z_4>0$, to $\Hk_{j+1}$ through $(4z_4)^j/j!=\tfrac{j+1}{4z_4}(4z_4)^{j+1}/(j+1)!$ (when $z_4=0$ only the terms with $j=0$ remain), and the elementary bounds on $\Hk_0,\Hk_1$, the term of \eqref{eq:U12} indexed by $(j,n,\epsilon)$ is bounded in absolute value by
\[
 C(1+|x|)\bigl(1+\sqrt\tau\bigr)(n+1)2^{\,n}
 \left(\frac{c_2z_4}{\sqrt{(j+1)\tau}}\right)^{\!j} e^{-X_{j,n,\epsilon}^2/(8\tau)} ,
\]
with an absolute constant $c_2$.
The $j$-sum converges faster than any geometric series, uniformly for $z_4$ bounded. For the $n$-sum, note that $X_{j,n,\epsilon}\geq(2n-2)z_1$: if $\epsilon_2=\epsilon_3=1$ then $c=2n-2$ and the terms $\epsilon_2z_2+\epsilon_3|z_3|$ are nonnegative, while otherwise $c\geq2n$ and $\epsilon_2z_2+\epsilon_3|z_3|\geq-2z_1$. Hence $e^{-X_{j,n,\epsilon}^2/(8\tau)}\leq e^{-(2n-2)^2\delta^2/(8\tau)}$ beats $2^{\,n}$ and the $n$-sum converges as well. The differentiated series obey the same bounds with $j$ replaced by $j+\ell$ and an extra factor $C(1+|x|)^\ell(1+\tau^{-\ell/2})$, by \eqref{eq:Hprops} and \eqref{eq:Hgrowth}. Finally, every exponent of the correction in \eqref{eq:U12} is at least $2z_1$ by Lemma~\ref{lem:dF}, so the Gaussian factors make the majorants of the correction bounded as $\tau\downarrow0$, uniformly on the stated sets. In the four-expert background the exponents with $m\geq2$ satisfy $X^{(4)}_{m,\epsilon}\geq2(m-1)z_1>0$, while the finitely many modes with $m\leq1$ have exponents that vanish on some collision walls with $z_1>0$, for instance $X^{(4)}_{0,(1,-1,-1)}=y_1$ on $\{x_1=x_2\}$; their prefactors are constants, so their second derivatives are bounded by a multiple of $\Hk_2\leq(\pi\tau)^{-1/2}$, which gives the $\O(\tau^{-1/2})$ statement.

We now prove (c), fixing $\lambda>0$ and writing $C_\lambda$ for constants that depend only on $\lambda$. In Region III the kernels are $\Hk_0,\Hk_1\geq0$, whose integrals against $e^{-\lambda\tau}$ are $\lambda^{-3/2}e^{-X\sqrt\lambda}$ and $\lambda^{-1}e^{-X\sqrt\lambda}$ by Lemma~\ref{lem:kernels}. The sum in (c) is therefore at most $\tfrac1{8k}\sum_{m,\epsilon}\bigl(|c^a_{m,\epsilon}|\lambda^{-3/2}+|c^b_{m,\epsilon}|a_4\lambda^{-1}\bigr)e^{-X_{m,\epsilon}\sqrt\lambda}$, which converges because the coefficients are $\O((1+m)^6)$ and $X_{m,\epsilon}\geq(2m-3)a_4$ for $m\geq2$. The four-expert background of \eqref{eq:U12} is treated in the same way, with $X^{(4)}_{m,\epsilon}\geq2(m-1)z_1$. For the correction, the coefficient bound $C(n+1)2^n$ used in (b) is too weak, because for large $\tau$ the Gaussian factor $e^{-X_{j,n,\epsilon}^2/(8\tau)}$ gives no decay in $n$, so we use two sharper estimates. First, by the expansions of $r_{j,n}$ and $s_{j,n}$ in the proof of (b), $s_{j,n}=60\binom{n+3}{j+7}$ and $r_{j,n}$ is a combination of binomial coefficients $\binom N{j+5}$ with $N\leq n+3$ and with weights of total size at most $31n$; since $c\geq2$, it follows that
\[
 |\alpha_{j,n}(c)|+|\beta_{j,n}(c)|\leq2\bigl(|r_{j,n}|+|s_{j,n}|\bigr)\leq C\,\frac{(n+4)^{j+7}}{j!}.
\]
Second, $\Hk_{i+2}(\cdot,\tau)=(-\partial_X)^i\Gk(\cdot,\tau)$ by \eqref{eq:Hprops}, and $\Gk(\cdot,\tau)$ is entire. On the circle $|\zeta-X|=X/4$ one has $\operatorname{Re}\zeta^2\geq\tfrac9{16}X^2$, hence $|\Gk(\zeta,\tau)|\leq \Gk(X/\sqrt2,\tau)$, and Cauchy's estimate gives $|\Hk_{i+2}(X,\tau)|\leq i!\,(4/X)^i\,\Gk(X/\sqrt2,\tau)$ for $X>0$. The transform of $\Gk(X/\sqrt2,\cdot)$ is $\lambda^{-1/2}e^{-X\sqrt{\lambda/2}}$, and $\Hk_0$, $\Hk_1$ are covered by Lemma~\ref{lem:kernels} as before, so for $X>0$ and every $i\geq0$
\[
 \int_0^\infty e^{-\lambda\tau}\bigl|\Hk_i(X,\tau)\bigr|\,d\tau
 \leq C_\lambda\,i!\,(4/X)^i(1+X)^2e^{-X\sqrt{\lambda/2}} .
\]
Apply this with $X=X_{j,n,\epsilon}$. By Lemma~\ref{lem:dF} and the lower bound $X_{j,n,\epsilon}\geq(2n-2)z_1$ from the proof of (b), $X\geq\max(2,2n-2)\,z_1\geq(n+4)z_1/3$, so that $16(n+4)/X\leq48/z_1$ and $4z_1/X\leq2$. The $(j,n,\epsilon)$ term of the correction therefore contributes at most
\[
\begin{aligned}
 &C_\lambda\,\frac{(4z_4)^j}{j!}\,\frac{(n+4)^{j+7}}{j!}
 \Bigl[j!\Bigl(\frac4X\Bigr)^{\!j}+(j+1)!\Bigl(\frac4X\Bigr)^{\!j+1}z_1\Bigr](1+X)^2e^{-X\sqrt{\lambda/2}}\\
 &\qquad\leq C_\lambda\,\frac{(96z_4/z_1)^j}{j!}\,(n+4)^7e^{-(n-1)z_1\sqrt{\lambda/2}},
\end{aligned}
\]
where we used $2j+3\leq3\cdot2^j$ and $(1+X)^2e^{-X\sqrt{\lambda/2}}\leq C_\lambda e^{-X\sqrt{\lambda/2}/2}$. The sum over $j$ is at most $e^{96z_4/z_1}$, and the sum over $n$ converges. An estimate of the same form, with explicit constants, is proved in the Lean formalization.
\end{proof}

\begin{proposition}\label{prop:properties}
Let $U$ be the function of Theorem~\ref{thm:main}. Then, for every $\tau>0$,
\begin{enumerate}
\item[(i)] $U$ is well defined: every exponent that occurs with a nonzero coefficient is nonnegative on its region, and on the diagonal the value is $x_1+\tfrac2{\sqrt\pi}u(0)\sqrt\tau$;
\item[(ii)] in the interior of each region $U$ satisfies the linear equation $U_\tau=\tfrac12D^2_{\v_*}U$;
\item[(iii)] $U(\ell^2\tau,\ell x)=\ell\,U(\tau,x)$ for $\ell>0$;
\item[(iv)] the Laplace transform of $U$ in $\tau$ is $\lambda^{-3/2}u(\sqrt\lambda\,x)$, where $u$ is the stationary five-expert solution, and the Laplace integral converges absolutely for every $\lambda>0$ and every $x$.
\end{enumerate}
\end{proposition}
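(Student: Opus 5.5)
We treat the four items in turn. Everything needed for (i)–(ii) is already contained in the derivation of Section~\ref{sec:derivation} and in Lemma~\ref{lem:convergence}. For (iii) and (iv) we use the homogeneity of the kernels together with Corollary~\ref{cor:recipe}.

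\emph{Step (i).} Nonnegativity of the exponents is the content of Lemma~\ref{lem:F4expansion}, Proposition~\ref{prop:region3} and Lemma~\ref{lem:dF}, which cover Regions III and I/II. On $\coll$ the exponent $2z_4\coth t$ is nonnegative. Hence every kernel $\Hk_j$ with $j\ge3$ is evaluated at a positive argument and every term is finite. The series converge by Lemma~\ref{lem:convergence}(a),(b), and the integral in \eqref{eq:face} converges by the estimate in the proof of Proposition~\ref{prop:face}. The diagonal lies in $\coll$ with $z_4=0$, so \eqref{eq:face} gives $x_1+\sqrt{\pi\tau}/(2\sqrt2)+\frac1k\Hk_0(0,\tau)\int_0^\infty\sinh t\,p(t)\dd$. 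We then use $\Hk_0(0,\tau)=2\sqrt{\tau/\pi}$ and the identity $\int_0^\infty\sinh t\,p(t)\dd=e(0)-\pi/4=45\pi^2/512-\pi/4$. This identity should follow from $u(0)=\frac1kF(0)$ via \eqref{eq:F12} and \cite[Theorem~2.1]{CD26}. Combining these gives $x_1+\frac2{\sqrt\pi}\cdot\frac{45\pi^2}{512k}\sqrt\tau$, which is the claimed value.

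\emph{Step (ii).} By Lemma~\ref{lem:convergence}, the twice–differentiated series converge locally uniformly in the interior of each region. Interior points of Region III have $a_4>0$ and interior points of Regions I/II have $z_1>0$. We may therefore differentiate term by term. Each term solves $\partial_\tau\Psi=\frac12D^2_{\v_*}\Psi$ by Lemma~\ref{lem:mode}, whose hypotheses (the derivatives $\partial_{\v_*}$ of exponents and prefactors) were checked in Proposition~\ref{prop:region3} and Lemmas~\ref{lem:F4expansion},~\ref{lem:dF}.

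\emph{Step (iii).} Every coordinate in \eqref{eq:coords} is linear, so it scales by $\ell$. The kernels satisfy $\Hk_j(\ell X,\ell^2\tau)=\ell^{1-j}\Hk_j(X,\tau)$: this holds for $j=0,1,2$ directly from \eqref{eq:H012}, and it propagates through \eqref{eq:Hrec}. A prefactor of degree $j$ multiplies $\Hk_j$ or $\Hk_{j+1}$, so each term is homogeneous of degree one. For \eqref{eq:face} the same check works: $\sqrt{\pi\tau}$ scales by $\ell$, and $\Hk_0$ has degree $1$.

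\emph{Step (iv).} Off $\coll$, Lemma~\ref{lem:convergence}(c) supplies the summability needed in Corollary~\ref{cor:recipe}, and continuity in $\tau$ comes from (a),(b). The corollary then identifies the transform as $\lambda^{-3/2}u(\sqrt\lambda x)$ via the mode expansions of Propositions~\ref{prop:trace},~\ref{prop:region3} and Lemmas~\ref{lem:F4expansion},~\ref{lem:dF}. On $\coll$ the integrand is nonnegative up to the sign of $p>0$, so Tonelli applies to $\sinh t\,p(t)\Hk_0$ and the transform computation of Proposition~\ref{prop:face} is justified.

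The main obstacle is absolute convergence of the Laplace integral at the boundary points excluded from (c). These are the points of Region III with $a_4=0$ outside $\coll$, which do not occur since there $z_1=0$ forces a point of $\coll$. They are also the walls where exponents vanish; there the terms still involve only kernels of order at most $2$ at zero argument, as in the proofs. I would first check that (c)'s estimates use only strict positivity of $a_4$ or $z_1$, not of individual exponents. The points left over are exactly $\coll$, and those are handled by Tonelli.
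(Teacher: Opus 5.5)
Your proposal is correct and follows the paper's proof item by item. It uses the same citations for nonnegativity of the exponents, and termwise application of Lemma~\ref{lem:mode} justified by Lemma~\ref{lem:convergence} for (ii). For (iii) it uses the homogeneity $\Hk_j(\ell X,\ell^2\tau)=\ell^{1-j}\Hk_j(X,\tau)$; note that the prefactor of $\Hk_{j+1}$ in \eqref{eq:U12} is $z_4^jz_1$, of degree $j+1$, not $j$. For (iv) it uses Lemma~\ref{lem:convergence}(c) with Corollary~\ref{cor:recipe} off $\coll$ and Tonelli on $\coll$. Lemma~\ref{lem:convergence}(c) already covers every point of the sector off $\coll$, walls included, so your ``main obstacle'' does not arise.

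The one local difference is the diagonal value. You evaluate it directly from \eqref{eq:face}, using $\Hk_0(0,\tau)=2\sqrt{\tau/\pi}$ and $\int_0^\infty\sinh t\,p(t)\dd=\tfrac{45\pi^2}{512}-\tfrac\pi4$, which is the stationary formula \eqref{eq:F12} at $x=0$. The paper instead reads the value off (iv) at $x=0$ by uniqueness of the Laplace transform, then translates along the diagonal. Both routes rest on the same stationary identity.
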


\begin{proof}
Convergence is Lemma~\ref{lem:convergence}, nonnegativity of the exponents is Lemma~\ref{lem:F4expansion}, Proposition~\ref{prop:region3} and Lemma~\ref{lem:dF}, and the collision set $\coll$ is covered by Proposition~\ref{prop:face}. The value on the diagonal is proved after (iv).

For (ii), apply Lemma~\ref{lem:mode} term by term: every exponent $X$ has $|\partial_{\v_*}X|=k$ and every prefactor is annihilated by $\partial_{\v_*}$, and term-by-term differentiation is legitimate by Lemma~\ref{lem:convergence}.

Statement (iv) is the identity \eqref{eq:transform} for the function of Theorem~\ref{thm:main}. At $x\notin\coll$, Lemma~\ref{lem:convergence}(c) shows that the Laplace integral converges absolutely and may be taken term by term. Each term transforms as in Lemma~\ref{lem:mode}, whose hypothesis holds because the kernels of order at least three have positive exponents (Lemma~\ref{lem:dF}). The result is the mode expansion of $\lambda^{-3/2}u(\sqrt\lambda\,x)$, which is valid at $\sqrt\lambda\,x$ because the regions are cones. On $\coll$ the integrand of \eqref{eq:face} is nonnegative, so Tonelli's theorem justifies the computation of the transform in the proof of Proposition~\ref{prop:face}. The transform of the curvatures is treated in Theorem~\ref{thm:regularity}(b), at every $x\in\R^5$.

At $x=0$, (iv) reads $\widehat U(\lambda,0)=u(0)\lambda^{-3/2}$, which is the transform of $\tfrac2{\sqrt\pi}u(0)\sqrt\tau$. Both functions are continuous in $\tau$, so they agree; this is Corollary~\ref{cor:constant} for the function of Theorem~\ref{thm:main}. On the diagonal $x=c\one$ one has $z_4=0$, so \eqref{eq:face} gives $U(\tau,c\one)=c+U(\tau,0)$, and this completes (i).

Statement (iii) follows directly from $\Hk_j(\ell X,\ell^2\tau)=\ell^{1-j}\Hk_j(X,\tau)$, which is immediate from \eqref{eq:Hdef}, because every exponent is linear in $x$ and every prefactor that multiplies $\Hk_j$ is homogeneous of degree $j$. It also follows from (iv), since $\ell\lambda^{-3/2}u(\sqrt\lambda\,x)$ is the transform of $\tau\mapsto U(\ell^2\tau,\ell x)$ when (iv) holds.
\end{proof}

\subsection{Regularity at the collision set}\label{sec:collision}

The series of Theorem~\ref{thm:main} converge, with all their derivatives, only away from $\coll$, and on $\coll$ the function is given by the separate quadrature \eqref{eq:face}. Nothing so far says that the derivatives of $U$ match up across $\coll$, or even across the interfaces and walls where they do converge. The following theorem settles all of this at once. Its proof does not use the series: it continues the stationary formulas into a sector of complex scaling parameters, where the stationary regularity result of~\cite{CD26} propagates by the identity theorem, and then represents $U$ by an absolutely convergent inverse Laplace integral in which one may differentiate under the integral sign at every point of $\R^5$. Throughout, $u_0=u(0)=45\pi^2/(512\sqrt2)$ and $\bar x=\tfrac15\sum_ix_i$.

\begin{theorem}\label{thm:regularity}
For every $\tau>0$ the function $U(\tau,\cdot)$ is $C^2$ on all of $\R^5$, including the collision set $\coll$ and the diagonal. The spatial derivatives of $U$ through order two are jointly continuous in $(\tau,x)$ on $(0,\infty)\times\R^5$, and so are all time derivatives $\partial_\tau^jU$ together with their spatial derivatives through order two. Moreover:
\begin{enumerate}
\item[(a)] $\sup_{x\in\R^5}\|D^2_xU(\tau,x)\|\leq C\tau^{-1/2}$;
\item[(b)] for every $\v\in\{0,1\}^5$, every $x\in\R^5$ and every $\lambda>0$,
\begin{equation}\label{eq:curvtransform}
 \int_0^\infty e^{-\lambda\tau}D^2_\v U(\tau,x)\,d\tau
 =D^2_\v\bigl[\lambda^{-3/2}u(\sqrt\lambda\,x)\bigr]
 =\lambda^{-1/2}\bigl(D^2_\v u\bigr)\bigl(\sqrt\lambda\,x\bigr),
\end{equation}
and the two regional expressions \eqref{eq:U3} and \eqref{eq:U12} agree, together with their first and second derivatives, at the points of the interface $a_1=0$ outside $\coll$;
\item[(c)] on the diagonal,
\begin{equation}\label{eq:diag-hess}
 D^2_xU(\tau,c\one)=\frac{75\pi^{3/2}}{512\sqrt{2\tau}}\Bigl(I-\tfrac15\one\one^T\Bigr),
 \ \ \text{so that}\ \ 
 D^2_\v U(\tau,c\one)=\frac{75\pi^{3/2}}{512\sqrt{2\tau}}\cdot\frac{m(5-m)}5
\end{equation}
for a binary control $\v$ with $m$ ones;
\item[(d)] $|U(\tau,x)-\phi(x)|\leq C\sqrt\tau$ for every $x\in\R^5$ and $\tau>0$.
\end{enumerate}
\end{theorem}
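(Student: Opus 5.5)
The plan is the one announced before the statement: represent $U$ by an inverse Laplace integral over a Hankel contour, and differentiate under the integral sign everywhere. Write $u=x_1+G$ with $G$ a function of the differences. Since $\phi$ is homogeneous, the stationary solution satisfies $u(\sqrt\lambda x)=\sqrt\lambda\,x_1+G(\sqrt\lambda x)$. The regional formulas of \cite{CD26} express $G$ through finitely many entire functions of $(a_i,z_i)$ together with the trace $e$, $\Lam$, $\theta$ and the integral in \eqref{eq:F12}. Each of these extends analytically to $G_\mu(x)=G(\mu x)$ for $\mu$ in a sector $|\arg\mu|<\pi/2+\eta'$. The functions $e$, $\Lam$, $\theta$ extend to a sector around the positive axis, and the quadratures in \eqref{eq:equad} and \eqref{eq:F12} extend by rotating the integration contour.

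The first key step is regularity for complex $\mu$. The identity $u\in C^2(\R^5)$ is a matching statement between regional formulas: the values and first and second derivatives agree on the interfaces, the walls and $\coll$. For real $\mu>0$ these matching identities hold. For fixed $x$ the difference of the two sides is analytic in $\mu$, so the identity theorem propagates them to the whole sector. This gives $\mu\mapsto D^2_xG_\mu(x)$ analytic and jointly continuous in $(\mu,x)$.

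The second key step is growth. I want the bounds $|G_\mu(x)|\le C(1+|\mu||x|)$ and $\|D^2_xG(\mu x)\|\le C|\mu|$ uniformly in $x$ on the closed sector $|\arg\mu|\le\pi/2+\eta$. These are obtained from the explicit formulas. One needs boundedness of $e,\Lam\sinh,\Lam\cosh$ and of the exponential factors, uniformly on rays where $\operatorname{Re}\mu x$ may be small. This is the main obstacle. Just past $|\arg\mu|=\pi/2$, the terms $\cosh(\mu a_i)$ and $e^{-2\mu z_4\coth t}$ grow only like $e^{|\mu||x|\sin\eta}$. To control this I would instead choose $\eta$ small and use the $\sigma_j$, $b_j$ combination (Region III) and the $\Phi_t$ products (Regions I, II): their decay in the ordered variables keeps the growth at most exponential with a small rate. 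I would then absorb that rate into the factor $e^{\lambda\tau}$ by taking the contour vertex at $\operatorname{Re}\lambda=\sigma>0$ and rescaling. If that fails, the fallback is Gaussian damping: bend the Hankel contour with angle below $\pi$ so that $\operatorname{Re}(\lambda\tau)\to-\infty$ linearly. This dominates any $e^{C|\lambda|^{1/2}|x|}$ growth, since $\sqrt\lambda$ grows only like $|\lambda|^{1/2}$.

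With these bounds, define
\[
U(\tau,x)=x_1+\tfrac{1}{2\pi i}\int_\Gamma e^{\lambda\tau}\lambda^{-3/2}G(\sqrt\lambda x)\,d\lambda
\]
on a Hankel contour $\Gamma$ around the negative axis. The integrand with two $x$-derivatives is bounded by $|\lambda|^{-1/2}e^{\operatorname{Re}\lambda\tau}$, so the integral converges absolutely, and differentiation under the integral gives $C^2$ in $x$ with joint continuity. Each $\partial_\tau$ brings down a factor $\lambda$, which is harmless on the contour. I then identify this $U$ with the function of Theorem~\ref{thm:main}: both have Laplace transform $\lambda^{-3/2}u(\sqrt\lambda x)$ (Proposition~\ref{prop:properties}(iv)) and both are continuous in $\tau$. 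Parts (b) and (d) follow from this representation.

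For (b), Fubini applied to the differentiated integral gives \eqref{eq:curvtransform}. Agreement of the two regional expressions at $a_1=0$ follows because both equal the single $C^2$ function there.

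For (a), rescale $\lambda\mapsto\lambda/\tau$. The Hessian bound $\|D^2_xG(\sqrt\lambda x)\|\le C|\lambda|^{1/2}$ gives $C\tau^{-1/2}\int_\Gamma|\lambda|^{-1}e^{\operatorname{Re}\lambda}|d\lambda|$. The contour is kept away from $0$, and this is legitimate because the integrand is analytic off the cut.

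For (d), the same rescaling together with boundedness of $G-\max_i x_i+x_1$ gives $|U-\phi|\le C\sqrt\tau$.

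For (c), on the diagonal the Hessian of $u$ at $\sqrt\lambda\cdot c\one$ is the constant $\tfrac53u_0(I-\tfrac15\one\one^T)$. Then \eqref{eq:curvtransform} gives the transform $\lambda^{-1/2}$ times this matrix, whose inverse is $(\pi\tau)^{-1/2}$ times it. Evaluating $\v^T(I-\tfrac15\one\one^T)\v=m-m^2/5$ gives the stated formula.
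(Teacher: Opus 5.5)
\textbf{Verdict: genuine gap.} Your architecture is the paper's: continuation in the scaling parameter, the identity theorem for the matching, contour inversion, uniqueness of the Laplace transform, Fubini for (b), and $D^2u(0)$ for (c). The gap is the step you yourself call the main obstacle, and that step is the analytic content of the theorem.

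\emph{The sector $|\arg\mu|<\pi/2+\eta'$ is not available.} The regional formulas are built from $\coth$, $\csch$, $\sech$, $\Lam$ and $\theta$, which are singular on the imaginary axis. For instance, $\coth$ and $\csch$ have poles at $i\pi\Z$. Also $\operatorname{Re}\theta(ir)=\tfrac\pi4\operatorname{sgn}\cos r$, with logarithmic singularities at the jumps; this is the square wave in the branch-cut integrand of \cite{BEZ20b}. So a Hankel contour wrapping the negative axis, on which $\arg\sqrt\lambda\to\pm\pi/2$, cannot be used, and there is no region "just past" $\pi/2$ to work in.

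\emph{Your fallback does not give the uniform statements.} Rays at an angle below $\pi$ are the right geometry. But the bound you pair with them, growth $e^{C|\lambda|^{1/2}|x|}$ absorbed by $e^{\lambda\tau}$, gives convergence only locally uniformly in $x$. It cannot give the $\sup_{x\in\R^5}$ in (a), nor a constant independent of $x$ in (d). That growth really occurs when $\arg\sqrt\lambda$ is near $\pi/2$. After rotating the ray in \eqref{eq:F12}, the factor $e^{-2\mu z_4\coth(\mu t)}$ has modulus $\exp\{-2z_4\operatorname{Re}(w\coth w)/t\}$ with $w=\mu t$. Here $\operatorname{Re}(w\coth w)<0$ at points close to the imaginary axis, for example $w=a+3\pi i/4$ with $a$ small. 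Since $z_4$ is the fifth-expert variable and is unbounded on the region, neither the $\Phi_t$ products nor the $\sigma_j,b_j$ structure in the ordered variables controls this.

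\emph{The missing ingredient} is the estimate $\operatorname{Re}(w\coth w)\geq c(1+|w|)$ for $|\arg w|\leq\pi/3$ (Lemma~\ref{lem:sector}), combined with rays $\arg\lambda=\pm\beta$ where $\pi/2<\beta<2\pi/3$. With this choice:
\begin{itemize}
\item $\operatorname{Re}\lambda<0$ on the rays, while $s=\sqrt\lambda$ stays in $|\arg s|<\pi/3$;
\item the rotated exponential has modulus at most one for every $z_4\geq0$;
\item every mode satisfies $|e^{-sX}|\leq e^{-|s|X/2}$ for $X\geq0$;
\item one gets $\|D^2_xf_s\|\leq C|s|^2$ and $|f_s(x)-sx_1|\leq C$ uniformly in $x$, which is exactly what (a) and (d) use.
\end{itemize}
These bounds, together with an estimate of the moving-endpoint term that appears in $\partial_{z_1}^2$ of the rotated quadrature, are also what show that the complex one-sided jets at $\coll$ exist. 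Your identity-theorem step silently assumes their existence there.

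\emph{Smaller points.}
\begin{itemize}
\item The bound $\|D^2_x[G(\mu x)]\|\leq C|\mu|$ is false: at $x=0$ the Hessian is $\mu^2D^2u(0)\neq0$. With your exponent the rescaling in (a) gives $\tau^0$. The correct bound $C|\mu|^2$ makes the integrand $O(|\lambda|^{-1/2})$, integrable at the origin, and yields $\tau^{-1/2}$.
\item For (d) you need $G_\mu$ bounded uniformly in $x$ on the contour, not $O(1+|\mu||x|)$.
\item Splitting off $x_1=\max_ix_i$ leaves an integrand that is not $C^1$ across the top-two tie $\{x_1=x_2\}$. So differentiating under the integral sign does not directly give $C^2$ on $\R^5$. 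The paper subtracts $u_0+\sqrt\lambda\,\bar x$ instead, which is globally affine in $x$ and makes the integrand $O(|x|^2|\lambda|^{-1/2})$ at the vertex of the contour.
\end{itemize}
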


The theorem does not assert a locally Lipschitz Hessian: it provides two continuous spatial derivatives, and Lipschitz continuity of the spatial Hessian across the collision sets is a different question, which we leave open.

The main new estimate is a lower bound for $\operatorname{Re}(z\coth z)$ on a sector; it is what allows the moving endpoint of the Regions I/II quadrature to be kept, so that no truncation error ever has to be inverted.

\begin{lemma}\label{lem:sector}
There is $c>0$ such that
\begin{equation}\label{eq:sector-est}
 \operatorname{Re}\bigl(z\coth z\bigr)\geq c\,(1+|z|)\qquad\text{for } \ |\arg z|\leq\pi/3 .
\end{equation}
In particular, for $|\eta|\leq\pi/3$ and $t>0$,
\begin{equation}\label{eq:coth-lower}
 \operatorname{Re}\bigl(e^{i\eta}\coth(e^{i\eta}t)\bigr)\geq c\,(1+t^{-1}).
\end{equation}
\end{lemma}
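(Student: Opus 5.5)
The plan is to write $z=x+iy$ and use the explicit formula
\[
 \operatorname{Re}(z\coth z)=\frac{x\sinh 2x+y\sin 2y}{\cosh 2x-\cos 2y},
\]
which follows from $\coth z=(\sinh 2x-i\sin 2y)/(\cosh 2x-\cos 2y)$. On the sector $|\arg z|\leq\pi/3$ we have $|y|\leq\sqrt3\,x$, so $x\geq|z|/2$. I split into a bounded region $|z|\leq R$ and an unbounded one $|z|\geq R$, with $R$ to be chosen.

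\emph{Large $|z|$.} The numerator is at least $x\sinh2x-|y|\geq x\sinh 2x-\sqrt3x$, and the denominator is at most $\cosh2x+1$. Hence
\[
 \operatorname{Re}(z\coth z)\geq x\,\frac{\sinh2x-\sqrt3}{\cosh2x+1}.
\]
For $x\geq x_0$ the fraction is at least $\tfrac12$, which gives $\operatorname{Re}(z\coth z)\geq x/2\geq|z|/4$. This is bounded below by $c(1+|z|)$ once $|z|\geq R$.

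\emph{Bounded $|z|$.} The closed sector minus the origin, intersected with $|z|\leq R$, together with the origin, is compact. The function $z\coth z$ is analytic there, since its poles $z=i\pi n$, $n\neq0$, lie on the imaginary axis outside the sector, and it extends continuously with value $1$ at $0$. It therefore suffices to show that $\operatorname{Re}(z\coth z)>0$ at every point of this compact set; continuity then gives a positive minimum. Positivity is the main obstacle. The numerator needs care because $y\sin2y$ can be negative. The plan is to use $\sinh 2x\geq 2x$ and $|\sin2y|\leq 2|y|$, which give numerator $\geq 2x^2-2y^2$. This is positive when $|y|<x$, that is, for $|\arg z|<\pi/4$, but it fails near $\arg z=\pi/3$.

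A sharper bound is needed there. I would use $\sinh 2x\geq 2x+\tfrac43x^3$ and $y\sin2y\geq -2y^2$; more carefully, $y\sin2y\geq0$ whenever $|y|\leq\pi/2$. So there are two cases:
\begin{itemize}
 \item If $|y|\leq\pi/2$, the numerator is at least $x\sinh2x>0$ for $x>0$.
 \item If $|y|>\pi/2$, then $x\geq|y|/\sqrt3>\pi/(2\sqrt3)\approx0.9$. Then $x\sinh2x\geq 0.9\sinh(1.8)$, and it grows like $e^{2x}$, which dominates $|y|\leq\sqrt3x$. Concretely, $x\sinh 2x\geq \sqrt3 x\geq|y|\geq|y\sin2y|$ holds once $\sinh2x\geq\sqrt3$, that is, $x\geq0.57$.
\end{itemize}
The denominator $\cosh2x-\cos2y$ is positive for $x>0$. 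So $\operatorname{Re}(z\coth z)>0$ throughout, and compactness gives a positive lower bound, which is comparable to $1+|z|$ on the bounded piece. Combining the two pieces yields \eqref{eq:sector-est}.

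For \eqref{eq:coth-lower}, set $z=e^{i\eta}t$. Then $e^{i\eta}\coth(e^{i\eta}t)=(z\coth z)/t$, whose real part is at least $c(1+t)/t=c(1+t^{-1})$.
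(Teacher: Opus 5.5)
Your proof is correct and follows the paper's argument essentially step for step. You use the same formula for $\operatorname{Re}(z\coth z)$, the same case split on the sign of $y\sin 2y$ (which forces $x>\pi/(2\sqrt3)$ and a comparison of $x\sinh 2x$ with $\sqrt3\,x$), and compactness together with the behaviour at $0$ and $\infty$; the only difference is that you make the large-$|z|$ bound explicit.

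There is one harmless slip. The inequality $\sinh 2x\geq\sqrt3$ needs $x\geq\tfrac12\operatorname{arcsinh}\sqrt3\approx0.66$, not $0.57$. This does not matter, because in that case $x>\pi/(2\sqrt3)\approx0.91$, so the inequality holds, and in fact strictly, which is what positivity needs.
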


\begin{proof}
Write $z=a+ib$ with $a>0$ and $|b|\leq\sqrt3\,a$. A direct calculation gives
\begin{equation}\label{eq:recoth}
 \operatorname{Re}(z\coth z)=\frac{a\sinh(2a)+b\sin(2b)}{\cosh(2a)-\cos(2b)},
\end{equation}
whose denominator is positive. If $b\sin(2b)\geq0$ the numerator is positive. Otherwise $|b|>\pi/2$, hence $a>\pi/(2\sqrt3)$, and
\[
 a\sinh(2a)+b\sin(2b)\geq a\bigl(\sinh(2a)-\sqrt3\bigr)>0,
\]
because $\sinh(\pi/\sqrt3)>\sqrt3$. So $\operatorname{Re}(z\coth z)>0$ on the closed sector. As $z\to0$, $z\coth z\to1$ uniformly in the sector, and as $|z|\to\infty$ in the sector $\coth z\to1$ uniformly while $a\geq|z|/2$; compactness on the intervening annulus gives \eqref{eq:sector-est}. Applying it to $z=e^{i\eta}t$ and dividing by $t$ gives \eqref{eq:coth-lower}.
\end{proof}

\begin{proof}[Proof of Theorem~\ref{thm:regularity}]
Let $\Sigma=\{s\neq0:|\arg s|<\pi/3\}$. For real $x$, sort the coordinates and retain the resulting real regional coordinates, and let $f_s(x)$ be the radial analytic continuation of $u(sx)$: in the regional stationary formulas, replace every linear coordinate by $s$ times that coordinate, so that in particular $d=|z_3|$ is replaced by $sd$ and not by $|sz_3|$, and scale the integration ray of the Regions I/II quadrature by $s$ as well. For $s>0$ this is $u(sx)$. We shall prove five properties of $f_s$: for each fixed $x$ the map $s\mapsto f_s(x)$ is holomorphic on $\Sigma$; for each fixed $s\in\Sigma$ the complex-valued function $f_s$ of the real variable $x$ is globally $C^2$; with a constant independent of $s$ and $x$,
\begin{equation}\label{eq:hessbound}
 \|D^2_xf_s(x)\|\leq C|s|^2 ;
\end{equation}
\begin{equation}\label{eq:f-at-zero}
 f_s(0)=u_0,\qquad D_xf_s(0)=\frac s5\,\one ;
\end{equation}
and, with $x_1$ the largest coordinate of $x$,
\begin{equation}\label{eq:affine}
 |f_s(x)-s\,x_1|\leq C .
\end{equation}
For the estimates it suffices to take $s=e^{i\eta}$ with $|\eta|\leq\pi/3$, since $f_{\rho e^{i\eta}}(x)=f_{e^{i\eta}}(\rho x)$ and $\rho e^{i\eta}x_1=e^{i\eta}(\rho x)_1$ for $\rho>0$; all bounds below are uniform in $\eta$.

We first treat Regions I and II, whose collision face $z_1=0$ is the delicate part. Put $r=z_1$, $q=z_4$ and $\zeta=(r,z_2,d)$, so that $0\leq d\leq z_2\leq r$ and $q\geq0$, and write $a=e^{i\eta}$. Changing variables $t\mapsto at$ in the correction of \eqref{eq:F12}, the continued correction is
\begin{equation}\label{eq:Ga}
 G_a(\zeta,q)=a\int_r^\infty\frac{p(at)}{\sinh^2(at)}\prod_{j=1}^3\sinh\bigl(a(t-\zeta_j)\bigr)\,e^{-2aq\coth(at)}\,dt .
\end{equation}
For positive real scaling this is exactly the stationary correction, and the expression with $a$ replaced by $s\in\Sigma$ is holomorphic in $s$, because on compact subsets of $\Sigma$ the integral converges locally uniformly by the bounds that follow; those bounds also cover $r=0$. Let $m(t)=\min(t,1)$. The elementary hyperbolic estimates on the sector, together with $p(z)=z/14+\O(z^3)$ at zero and the rational form \eqref{eq:prational} at infinity, give, for $0\leq\zeta\leq t$,
\begin{equation}\label{eq:hyp-est}
\begin{gathered}
 |\sinh(at)\,p(at)|\leq Cm(t)^2(1+t)e^{-t/2},\qquad |\coth(at)|\leq C/m(t),\\
 \Bigl|\frac{\sinh(a(t-\zeta))}{\sinh(at)}\Bigr|\leq C,\qquad
 \Bigl|\partial_\zeta^j\frac{\sinh(a(t-\zeta))}{\sinh(at)}\Bigr|\leq Cm(t)^{-j}\quad(j=1,2);
\end{gathered}
\end{equation}
they follow from $|\sinh(at)|\geq c\,m(t)e^{t\cos\eta}$, $|\sinh(a(t-\zeta))|\leq Cm(t)e^{(t-\zeta)\cos\eta}$ and $|\cosh(a(t-\zeta))|\leq Ce^{(t-\zeta)\cos\eta}$. By \eqref{eq:coth-lower} the exponential in \eqref{eq:Ga} has modulus at most $\exp\{-2cq(1+t^{-1})\}\leq1$. Consequently any derivative of total order $j\leq2$ in the spatial variables $(r,z_2,d,q)$, taken in the integrand with $t$ held fixed, has modulus at most
\begin{equation}\label{eq:integrand-maj}
 Cm(t)^{2-j}(1+t)e^{-t/2},
\end{equation}
which is integrable on $(0,\infty)$ uniformly in all spatial variables in the regional cone. There is one moving-endpoint term to check. Writing $K(t;\zeta,q)$ for the integrand of \eqref{eq:Ga} including the leading factor $a$, one has $K(r;\zeta,q)=0$, because the first hyperbolic factor vanishes at $t=r$; so the first $r$-derivative of $G_a$ has no endpoint contribution, and among the second derivatives only
\begin{equation}\label{eq:endpoint}
 \partial_r^2G_a=\int_r^\infty\partial_r^2K(t;\zeta,q)\,dt-\partial_rK(r;\zeta,q)
\end{equation}
acquires an endpoint term, the other first derivatives of $K$ still containing the vanishing factor. Moreover
\begin{equation}\label{eq:endpoint-est}
 |\partial_rK(r;\zeta,q)|\leq Cm(r)(1+r)e^{-r/2},
\end{equation}
because for small $r$ the factor $p(ar)/\sinh^2(ar)$ is $\O(r^{-1})$ while the two remaining hyperbolic factors are $\O(r^2)$, and the exponential has modulus at most one. Dominated convergence and \eqref{eq:endpoint-est} now show that $G_a$, its gradient and its Hessian have continuous limits as $r\downarrow0$, uniformly also as $q\downarrow0$, and that all these derivatives are bounded on the whole regional cone. This is the step that includes the full diagonal. The same majorant with $j=0$ bounds $|G_a|$ itself by a constant, uniformly on the regional cone.

The four-expert background $F_4$ of \eqref{eq:F4}, continued to $F_4(az_1,az_2,az_3)$, has the same property. For $r\leq1$ its only nonanalytic contribution is $\Lam(ar)\sinh(ar)\sinh(az_2)\sinh(az_3)$, and since $\Lam(z)=\log(2/z)+\O(z^2)$ and $|z_2|,|z_3|\leq r$, its derivatives of order $j\leq2$ are $\O(r^{3-j}(1+|\log r|))$, so its Hessian tends to zero; the remaining terms are analytic at the origin. For $r\geq1$ we use the mode expansion of Lemma~\ref{lem:F4expansion}: the finitely many modes with $m\leq1$ have nonnegative exponents, for $m\geq2$ one has $X^{(4)}_{m,\epsilon}\geq2(m-1)r$, the coefficients after two spatial derivatives grow at most polynomially in $m$, and $|e^{-aX^{(4)}_{m,\epsilon}}|\leq e^{-X^{(4)}_{m,\epsilon}/2}$ because $\cos\eta\geq\tfrac12$; the differentiated series is therefore bounded uniformly for $r\geq1$. The undifferentiated series is bounded there too, its terms being $\O(m^{-1})e^{-X^{(4)}_{m,\epsilon}/2}$, while on $r\leq1$ the expression $F_4(az)$ is continuous on a compact set of $(\eta,z)$. Since $f_s(x)-sx_1=k^{-1}[F_4(az)+G_a]$ in these regions, this proves \eqref{eq:affine} there.

We next treat Region III. Set $L=a_4$, with $0\leq a_1,a_2,a_3\leq L$. The trace is analytic at zero, since its quadrature \eqref{eq:equad} may be written as
\begin{equation}\label{eq:e-analytic}
 e(z)=\cosh z\Bigl[e(0)-3\int_0^zI(t)\sech^2t\,\csch^5t\dd\Bigr],
\end{equation}
whose integrand is analytic at zero with value $\tfrac15$. From the coefficients \eqref{eq:bcoefficients}, the only nonanalytic terms of the Region III expression \eqref{eq:F3} are constant multiples of
\[
 \Lam(aL)\sinh(aL)\,\sigma_2(aa_1,aa_2,aa_3)\qquad\text{and}\qquad\Lam(aL)\cosh(aL)\,\sigma_3(aa_1,aa_2,aa_3),
\]
whose derivatives of order $j\leq2$ are $\O(L^{3-j}(1+|\log L|))$ as $L\downarrow0$, uniformly over all directions in the cone and all $\eta$. Thus the function and its first two derivatives have direction-independent limits at the origin within the region. For $L\geq1$ we use the expansion of Proposition~\ref{prop:region3}: the modes with $m\leq2$ have constant prefactors and, whenever their coefficients are nonzero, nonnegative exponents; for $m\geq3$, $X_{m,\epsilon}\geq(2m-3)L$, the coefficients after two spatial derivatives grow only polynomially in $m$ with at most one further factor $L$, and $|e^{-aD_{m,\epsilon}}|\leq e^{-(2m-3)L/2}$ bounds the differentiated tail uniformly. The same bounds without derivatives, with the continuity of the expression on $L\leq1$, give \eqref{eq:affine} in Region III, where $f_s(x)-sx_1$ is $k^{-1}$ times the continued expression.

It remains to match the continued regional formulas. All one-sided spatial jets through order two, including the collision limits just constructed, are holomorphic functions of $s\in\Sigma$, by the locally uniform integral bounds, the local analytic and logarithmic formulas, and the convergent mode expansions away from zero. At every regional interface and at every permutation wall the corresponding one-sided jets agree for all positive real $s$, because the real stationary solution $u$ is globally $C^2$~\cite[Theorem~2.1]{CD26}; the identity theorem extends these equalities to all of $\Sigma$, and the same argument applies to the limiting jets at the collision set. The gluing lemma of \cite[Lemma~4.3]{CD26}, applied to the real and imaginary parts, now gives $f_s\in C^2(\R^5)$ for every $s\in\Sigma$. The regional bounds, the fixed linear coordinate changes and the scaling $f_{\rho e^{i\eta}}(x)=f_{e^{i\eta}}(\rho x)$ prove \eqref{eq:hessbound}; and permutation symmetry with translation equivariance of the stationary solution give $\nabla u(0)=\tfrac15\one$, so \eqref{eq:f-at-zero} follows by analytic continuation. The bound \eqref{eq:affine} for general $s$ follows from the case $|s|=1$ by the scaling noted at the start. This completes the proof of the five properties.

We now invert the transform on an absolutely convergent contour. Use the principal square root and fix $\beta=3\pi/5$, so that $\pi/2<\beta<2\pi/3$; let $\gamma$ be the Hankel contour consisting of the ray $\arg\lambda=-\beta$, oriented from infinity to zero, followed by the ray $\arg\lambda=\beta$, oriented from zero to infinity. Define
\begin{equation}\label{eq:Rdef}
 R(\lambda,x)=\lambda^{-3/2}\bigl[f_{\sqrt\lambda}(x)-u_0-\sqrt\lambda\,\bar x\bigr],
\end{equation}
which is holomorphic for $|\arg\lambda|<2\pi/3$. Taylor's theorem in the real variable $x$, together with \eqref{eq:hessbound} and \eqref{eq:f-at-zero}, gives on this sector
\begin{equation}\label{eq:Rbounds}
 |R(\lambda,x)|\leq C|x|^2|\lambda|^{-1/2},\quad
 |D_xR(\lambda,x)|\leq C|x|\,|\lambda|^{-1/2},\quad
 \|D_x^2R(\lambda,x)\|\leq C|\lambda|^{-1/2},
\end{equation}
both near zero and at infinity. Now define
\begin{equation}\label{eq:Wdef}
 W(\tau,x)=\frac{2u_0}{\sqrt\pi}\sqrt\tau+\bar x+\frac1{2\pi i}\int_{\gamma}e^{\lambda\tau}R(\lambda,x)\,d\lambda .
\end{equation}
Along $\gamma$ one has $|e^{\lambda\tau}|=e^{-c_\beta\tau|\lambda|}$ with $c_\beta=-\cos\beta>0$, so \eqref{eq:Rbounds} supplies the integrable majorant $C_Ke^{-c_\beta\tau\rho}\rho^{-1/2}\,d\rho$ for \eqref{eq:Wdef} and for its first two spatial derivatives, locally uniformly in $x$. Differentiating under the integral sign is therefore justified, at every point of $\R^5$ including the collision points, and $W(\tau,\cdot)$ is globally $C^2$, with spatial derivatives through order two jointly continuous in $(\tau,x)$.

To identify $W$ with $U$, fix $\mu>0$. Absolute convergence and Fubini give
\begin{equation}\label{eq:cauchy}
 \int_0^\infty e^{-\mu\tau}\Bigl[\frac1{2\pi i}\int_{\gamma}e^{\lambda\tau}R(\lambda,x)\,d\lambda\Bigr]d\tau
 =\frac1{2\pi i}\int_{\gamma}\frac{R(\lambda,x)}{\mu-\lambda}\,d\lambda=R(\mu,x),
\end{equation}
the last equality being Cauchy's formula in the sector bounded by $\gamma$; the small circular arc contributes $\O(\varepsilon^{1/2})$ and the large one $\O(M^{-1/2})$ by \eqref{eq:Rbounds}, so both disappear, and the stated orientation together with the denominator $\mu-\lambda$ gives the positive sign. With the elementary transforms of $\sqrt\tau$ and of a constant, \eqref{eq:cauchy} yields
\begin{equation}\label{eq:Wtransform}
 \int_0^\infty e^{-\mu\tau}W(\tau,x)\,d\tau=u_0\mu^{-3/2}+\bar x\,\mu^{-1}+R(\mu,x)=\mu^{-3/2}u(\sqrt\mu\,x),
\end{equation}
which is the transform of $U$ by Proposition~\ref{prop:properties}(iv). Both Laplace integrals converge absolutely for every $\mu>0$: that of $U$ by Proposition~\ref{prop:properties}(iv), and that of $W$ because the integral in \eqref{eq:Wdef} is bounded by $C|x|^2\tau^{-1/2}$, by the majorant above. Since $U$ and $W$ are also continuous in $\tau>0$, uniqueness of the Laplace transform gives $W=U$. In particular the Hessian is represented by
\begin{equation}\label{eq:hess-contour}
 D^2_xU(\tau,x)=\frac1{2\pi i}\int_{\gamma}e^{\lambda\tau}\lambda^{-3/2}D^2_xf_{\sqrt\lambda}(x)\,d\lambda,
\end{equation}
whose integrand is bounded by $Ce^{-c_\beta\tau|\lambda|}|\lambda|^{-1/2}$ uniformly in $x\in\R^5$. This proves the continuity of the Hessian across the entire collision set and the bound (a). Time differentiation multiplies the integrand by powers of $\lambda$, and the same argument applies to every number of time derivatives on compact subsets of $(0,\infty)$; the time derivatives of the explicit $\sqrt\tau$ term are harmless there.

For (b), \eqref{eq:hess-contour} and Fubini give, exactly as in \eqref{eq:cauchy},
\[
 \int_0^\infty e^{-\mu\tau}D^2_xU(\tau,x)\,d\tau=\mu^{-3/2}D^2_xf_{\sqrt\mu}(x)=\mu^{-1/2}(D^2u)(\sqrt\mu\,x)
\]
at every $x$, which is \eqref{eq:curvtransform}; and the two regional expressions are restrictions of the single $C^2$ function $U$ to their domains, both of which contain the points of the interface outside $\coll$, where each converges with its derivatives by Lemma~\ref{lem:convergence}, so their jets agree there.

For (c), permutation symmetry and translation equivariance give $D^2u(0)=A(I-\tfrac15\one\one^T)$, and the stationary equation at zero, in which the maximum is attained by any binary control with two ones, gives $u_0=\tfrac12A(2-\tfrac45)=\tfrac35A$, i.e.\ $A=\tfrac53u_0$; this is the expansion \cite[equation~(2.18)]{CD26}. Since $D^2_xf_{\sqrt\lambda}(0)=\lambda\,D^2u(0)$, formula \eqref{eq:hess-contour} and the inverse transform $\Lap^{-1}[\lambda^{-1/2}](\tau)=(\pi\tau)^{-1/2}$ give $D^2_xU(\tau,0)=\tfrac53u_0(\pi\tau)^{-1/2}(I-\tfrac15\one\one^T)$, which is \eqref{eq:diag-hess} at $c=0$; translation equivariance $U(\tau,x+c\one)=U(\tau,x)+c$ moves it along the diagonal, and $\v^T(I-\tfrac15\one\one^T)\v=m(5-m)/5$ for a binary $\v$ with $m$ ones.

For (d), fix $x$ with sorted coordinates, so that $\phi(x)=x_1$, and fix $\tau>0$. The integrand $e^{\lambda\tau}R(\lambda,x)$ is holomorphic on the sector $|\arg\lambda|<2\pi/3$ and $R=\O(|\lambda|^{-1/2})$ at the origin, so, exactly as in \eqref{eq:cauchy}, the contour $\gamma$ in \eqref{eq:Wdef} may be replaced by the contour $\gamma_\tau$ that follows the ray $\arg\lambda=-\beta$ from infinity to $\tau^{-1}e^{-i\beta}$, the arc $|\lambda|=\tau^{-1}$ through the positive real axis to $\tau^{-1}e^{i\beta}$, and the ray $\arg\lambda=\beta$ to infinity. On $\gamma_\tau$ the origin is avoided, and we may split
\[
 R(\lambda,x)=\lambda^{-1}(x_1-\bar x)+\lambda^{-3/2}\bigl[f_{\sqrt\lambda}(x)-\sqrt\lambda\,x_1-u_0\bigr].
\]
Closing $\gamma_\tau$ by a large arc through the negative real axis, on which $e^{\lambda\tau}$ decays, and using the residue at the origin, $\frac1{2\pi i}\int_{\gamma_\tau}e^{\lambda\tau}\lambda^{-1}\,d\lambda=1$, so the first term contributes exactly $x_1-\bar x$ to $W$ and cancels the $\bar x$ of \eqref{eq:Wdef} against $x_1$. In the second term the bracket is bounded by $C+u_0$ by \eqref{eq:affine}. On the two rays, where $|\lambda|\geq\tau^{-1}$ and $|e^{\lambda\tau}|=e^{-c_\beta\tau|\lambda|}$, the substitution $|\lambda|=\sigma/\tau$ bounds its contribution by $\pi^{-1}(C+u_0)\sqrt\tau\int_1^\infty e^{-c_\beta\sigma}\sigma^{-3/2}\,d\sigma$, and on the arc, of length at most $2\pi/\tau$, where $|e^{\lambda\tau}|\leq e$ and $|\lambda|^{-3/2}=\tau^{3/2}$, it is at most $e(C+u_0)\sqrt\tau$. Hence
\[
 |U(\tau,x)-x_1|\leq\Bigl(\frac{2u_0}{\sqrt\pi}+C'\Bigr)\sqrt\tau
\]
uniformly in $x$. This is (d).
\end{proof}

Two consequences are used repeatedly below. First, since $U$ is symmetric under permutations of the coordinates and $C^1$, it satisfies the permutation face conditions of Section~\ref{sec:transform} on every wall. Second, the linear equation of Proposition~\ref{prop:properties}(ii) is an identity between functions that are continuous on $(0,\infty)\times\R^5$, so it holds on the closed ordered sector, $\coll$ included:
\begin{equation}\label{eq:linear-closed}
 U_\tau=\tfrac12D^2_{\v_*}U\qquad\text{on }(0,\infty)\times\{x_1\geq\cdots\geq x_5\},
\end{equation}
and on the image $\sigma(S)$ of the sector under a permutation $\sigma$ the same holds with $\v_*$ replaced by $\sigma\v_*$.

\subsection{Transfer of curvature identities and the COMB equality set}\label{sec:transfer}

The transform principle turns every equality between curvatures of the stationary solution into the same equality for the finite-horizon solution, provided the set on which it holds is a cone. This is immediate but useful, because the relevant sets, the three regions, are cones.

\begin{proposition}\label{prop:transfer}
Let $\Omega\subseteq\R^5$ be a cone, $\Omega=\{\ell x:\ell>0,\ x\in\Omega\}$, and let $\v,\v'\in\{0,1\}^5$ satisfy $D^2_\v u=D^2_{\v'}u$ at every $x\in\Omega$. Then $D^2_\v U(\tau,x)=D^2_{\v'}U(\tau,x)$ for every $\tau>0$ and every $x\in\Omega$.
\end{proposition}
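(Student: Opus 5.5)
The plan is to prove Proposition~\ref{prop:transfer} directly from the curvature transform identity \eqref{eq:curvtransform} of Theorem~\ref{thm:regularity}(b), followed by uniqueness of the Laplace transform. Fix $x\in\Omega$ and set $g(\tau)=D^2_\v U(\tau,x)-D^2_{\v'}U(\tau,x)$ for $\tau>0$. By Theorem~\ref{thm:regularity}, $g$ is continuous on $(0,\infty)$, and by part (a) it satisfies $|g(\tau)|\leq C\tau^{-1/2}$. Hence $\int_0^\infty e^{-\lambda\tau}|g(\tau)|\,d\tau<\infty$ for every $\lambda>0$.

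Next we compute the transform of $g$. Applying \eqref{eq:curvtransform} to $\v$ and to $\v'$ and subtracting gives
\[
 \int_0^\infty e^{-\lambda\tau}g(\tau)\,d\tau=\lambda^{-1/2}\Bigl[(D^2_\v u)(\sqrt\lambda\,x)-(D^2_{\v'}u)(\sqrt\lambda\,x)\Bigr]
\]
for every $\lambda>0$. This is the only point where the cone hypothesis enters. Since $x\in\Omega$ and $\sqrt\lambda>0$, the point $\sqrt\lambda\,x$ lies in $\Omega$, and the hypothesis $D^2_\v u=D^2_{\v'}u$ on $\Omega$ makes the bracket vanish. Thus $\Lap[g](\lambda)=0$ for all $\lambda>0$.

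A continuous function on $(0,\infty)$ whose Laplace integral converges absolutely and vanishes for all real $\lambda>0$ is identically zero. This follows from Lerch's theorem, or from the substitution $s=e^{-\tau}$ together with Weierstrass density on the moments, which applies because $e^{-\lambda_0\tau}g$ is integrable. Hence $g\equiv0$, which proves the claim for each $\tau>0$ and each $x\in\Omega$.

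The argument has no real obstacle. All the work sits in Theorem~\ref{thm:regularity}(b), which supplies \eqref{eq:curvtransform} at every $x\in\R^5$, including points of $\coll$ and points on the walls. One should still confirm that the Laplace integrals in \eqref{eq:curvtransform} converge absolutely, so that subtracting them and invoking uniqueness are both legitimate. The $\O(\tau^{-1/2})$ bound handles this near $\tau=0$, and the same bound gives integrability against $e^{-\lambda\tau}$ at infinity.
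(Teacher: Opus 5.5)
Your proposal is correct and follows the paper's own proof: apply \eqref{eq:curvtransform} to $\v$ and $\v'$, note that $\sqrt\lambda\,x\in\Omega$ because $\Omega$ is a cone so the two transforms coincide, and conclude by uniqueness of the Laplace transform for continuous functions. Your appeal to Theorem~\ref{thm:regularity}(a) to get absolute convergence of the Laplace integrals is a harmless extra justification that the paper leaves implicit.
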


\begin{proof}
Fix $x\in\Omega$. Since $\Omega$ is a cone, $\sqrt\lambda\,x\in\Omega$ for every $\lambda>0$, so the right side of \eqref{eq:curvtransform}, which holds at every $x$ by Theorem~\ref{thm:regularity}(b), is the same for $\v$ and $\v'$. Hence the continuous functions $\tau\mapsto D^2_\v U(\tau,x)$ and $\tau\mapsto D^2_{\v'}U(\tau,x)$ have the same Laplace transform, and therefore coincide.
\end{proof}

\begin{corollary}\label{cor:ties}
For every $\tau>0$ and every state of the closed sector, the direction $\v_*=(1,0,1,0,0)$ shares the value of its curvature $D^2_{\v_*}U$ with $(1,0,0,1,1)$ in Region I, with $(1,0,0,1,0)$ in Region II, and with $(1,0,0,0,1)$ and $(1,0,0,1,0)$ in Region III. Conversely, if a control attains the Hamiltonian maximum for $U$ at every state of the sector and every $\tau>0$, then it attains the Hamiltonian maximum for $u$ throughout the sector, so $\v_*$ is the only control with this property.
\end{corollary}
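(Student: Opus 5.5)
The first part of the corollary follows from Proposition~\ref{prop:transfer}, and the second from \eqref{eq:curvtransform} together with the companion paper.

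\emph{The ties.} Regions~I, II and III are cones. Each is cut out of the ordered sector by homogeneous inequalities among the gaps $y_i$, so it is invariant under $x\mapsto\ell x$ for $\ell>0$, and so are their closures. By \cite[Remark~2.3]{CD26}, the stationary solution satisfies $D^2_{\v_*}u=D^2_{\v}u$ on the listed regions for the listed controls $\v$. We apply Proposition~\ref{prop:transfer} once to each pair $(\v_*,\v)$ with $\Omega$ the corresponding open region. This gives the equality of curvatures of $U$ on the interior of each region for every $\tau>0$. By Theorem~\ref{thm:regularity}, $D^2_\v U(\tau,\cdot)$ is continuous on $\R^5$, so the equalities extend to the closed regions. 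This covers the states on interfaces and on $\coll$. If the stationary ties hold only on the open regions, we extend them to the closures first by the global $C^2$ regularity of $u$; alternatively we apply Proposition~\ref{prop:transfer} directly to the closed cones.

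\emph{The converse.} Suppose a control $\v$ satisfies $D^2_\v U(\tau,x)=D^2_{\v_*}U(\tau,x)$ for all $\tau>0$ and all $x$ in the sector. Note that $\v_*$ attains the maximum, so attaining the maximum means exactly this equality with $\v_*$. Fix $x$. The two continuous functions of $\tau$ coincide, so their Laplace transforms coincide. By \eqref{eq:curvtransform}, for every $\lambda>0$,
\[
\lambda^{-1/2}(D^2_\v u)(\sqrt\lambda\,x)=\lambda^{-1/2}(D^2_{\v_*}u)(\sqrt\lambda\,x).
\]
Taking $\lambda=1$ gives $D^2_\v u(x)=D^2_{\v_*}u(x)$. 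Since $\v_*$ attains the stationary maximum everywhere \cite{CD26}, so does $\v$.

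\emph{Uniqueness.} It remains to show that $\v_*$, up to complements, is the only control attaining the stationary maximum on the whole sector. Every tie listed in \cite[Remark~2.3]{CD26} holds only on one region, and COMB ties only on $\{x_1=x_2,\ x_3=x_4\}$ \cite[Theorem~2.2]{CD26}. So any other control falls strictly below the maximum at some state of the sector. This is the one point that needs care. We need that the stationary paper identifies \emph{all} maximizers region by region, and not only the ties it lists. The plan is to cite the complete classification of maximizers from \cite{CD26} and intersect the sets over the three regions. Only $\v_*$ and its complement survive in all three, and complements are identified throughout the paper.
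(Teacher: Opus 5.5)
Your proposal is correct and follows the paper's proof: the regions are cones, the stationary ties come from \cite[Remark~2.3]{CD26}, Proposition~\ref{prop:transfer} gives the ties for $U$, the transform identity \eqref{eq:curvtransform} gives the converse, and uniqueness of the stationary maximizer is cited from \cite{CD26}. The differences are cosmetic. The paper applies Proposition~\ref{prop:transfer} directly to the closed regions, which are themselves cones, so your continuity extension is not needed. For the converse, the paper integrates the inequalities $D^2_{\v'}U\geq D^2_\v U$ against $e^{-\lambda\tau}$ directly. This avoids your detour through equality with $\v_*$, and with it any use of Theorem~\ref{thm:main}(ii), which is proved only later.
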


\begin{proof}
Each of the three regions is a cone, and on each the listed directions have the same curvature as $\v_*$ for the stationary solution~\cite[Remark~2.3]{CD26}; apply Proposition~\ref{prop:transfer}. For the converse, if $D^2_{\v'}U\geq D^2_\v U$ for all $\v$ at every $(\tau,x)$, then integrating against $e^{-\lambda\tau}$ and using \eqref{eq:curvtransform} gives $D^2_{\v'}u\geq D^2_\v u$ at every $\sqrt\lambda\,x$, hence throughout the sector, and $\v_*$ is the only control optimal throughout the sector for the stationary problem~\cite{CD26}.
\end{proof}

The COMB strategy of Gravin, Peres and Sivan~\cite{GPS16} is, in decreasing rank order, $\v_C=(1,0,1,0,1)$. For the stationary problem it attains the Hamiltonian maximum exactly on $\{x_1=x_2\}\cap\{x_3=x_4\}$~\cite[Theorem~2.2]{CD26}. For the finite-horizon problem the equality on this set is a consequence of symmetry alone; the strict comparison away from it is part of Theorems~\ref{thm:region3ham} and~\ref{thm:region12ham} below.

\begin{proposition}\label{prop:combset}
Let $\Delta$ be the curvature gap \eqref{eq:comb-gap}. Then, for every $\tau>0$ and every state of the ordered sector,
\begin{equation}\label{eq:combset}
 \Delta(\tau,x)=0\qquad\text{whenever}\quad x_1=x_2\ \text{ and }\ x_3=x_4 .
\end{equation}
Moreover $\Delta(\tau,x)=\tau^{-1/2}\Delta(1,x/\sqrt\tau)$, so $\Delta>0$ off the set \eqref{eq:combset} for all $\tau$ if and only if it holds for $\tau=1$.
\end{proposition}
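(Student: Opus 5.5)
The equality \eqref{eq:combset} should follow from symmetry alone, using only that $U(\tau,\cdot)$ is a globally $C^2$ permutation-symmetric function (Theorem~\ref{thm:regularity}) together with translation equivariance. The scaling identity should follow from Proposition~\ref{prop:properties}(iii). No stationary information and no computer assistance is needed here.

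For the equality, fix $\tau>0$ and $x$ in the ordered sector with $x_1=x_2$ and $x_3=x_4$. Let $P$ be the permutation matrix of $\sigma=(12)(34)$, so that $Px=x$. Since $U(\tau,\cdot)$ is defined by sorting the coordinates, $U(\tau,Py)=U(\tau,y)$ for all $y$. By Theorem~\ref{thm:regularity} it is $C^2$ everywhere, including at points of $\coll$ or the diagonal that may lie in the set, so differentiating twice at the fixed point $x$ gives $P^T\nabla^2U(\tau,x)P=\nabla^2U(\tau,x)$. Hence $D^2_\v U(\tau,x)=D^2_{P\v}U(\tau,x)$ for every $\v$. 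With $\v=\v_C=(1,0,1,0,1)$ we get $P\v_C=(0,1,0,1,1)=\one-\v_*$.

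Next I use that $U(\tau,x+c\one)=U(\tau,x)+c$. This is visible from the formulas, since $U$ is $x_1$ plus a function of the coordinate differences, and it is also used in the proof of Theorem~\ref{thm:regularity}(c). Differentiating in $c$ and then in $x$ gives $\nabla^2U(\tau,x)\one=0$. Therefore $D^2_{\one-\v_*}U=D^2_{\v_*}U$, and so $D^2_{\v_C}U(\tau,x)=D^2_{\v_*}U(\tau,x)$, which is $\Delta=0$.

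For the scaling, differentiate the identity $U(\ell^2\tau,\ell y)=\ell\,U(\tau,y)$ of Proposition~\ref{prop:properties}(iii) twice in $y$. This is legitimate by the $C^2$ regularity and gives $\ell^2\nabla^2U(\ell^2\tau,\ell y)=\ell\,\nabla^2U(\tau,y)$. Taking the original time to be $1$, $\ell=\sqrt\tau$ and $y=x/\sqrt\tau$ yields $\nabla^2U(\tau,x)=\tau^{-1/2}\nabla^2U(1,x/\sqrt\tau)$. Contracting with $\v_*$ and $\v_C$ gives $\Delta(\tau,x)=\tau^{-1/2}\Delta(1,x/\sqrt\tau)$. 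Since the ordered sector and the set $\{x_1=x_2,\ x_3=x_4\}$ are both invariant under positive dilations, strict positivity off the set for $\tau=1$ is equivalent to strict positivity for all $\tau$.

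I expect no real obstacle. The only delicate point is that the symmetry argument needs the Hessian to exist at points of the equality set that lie in $\coll$, including the diagonal, where the mode series do not converge. That point is exactly what Theorem~\ref{thm:regularity} supplies, so I rely on it rather than on the regional series.
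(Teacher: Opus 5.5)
Your proof is correct and follows the paper's argument: the involution exchanging $x_1\leftrightarrow x_2$ and $x_3\leftrightarrow x_4$ fixes the set, so $\nabla^2U=P^T\nabla^2U\,P$ there (the global $C^2$ regularity coming from Theorem~\ref{thm:regularity}), which together with the complement identification gives $\Delta=0$, and the scaling comes from differentiating Proposition~\ref{prop:properties}(iii) twice. The only cosmetic difference is that you apply $P$ to $\v_C$ rather than to $\v_*$, and you derive $\nabla^2U\,\one=0$ explicitly instead of invoking the standing identification of complementary controls.
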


\begin{proof}
Let $P$ be the matrix of the permutation that exchanges $x_1$ with $x_2$ and $x_3$ with $x_4$. It fixes every point of $S=\{x_1=x_2,\ x_3=x_4\}$, and $U(\tau,Px)=U(\tau,x)$ for every $x$, so differentiating twice at a point of $S$ gives $\nabla^2U=P^T\nabla^2U\,P$ there, and hence $D^2_{P\v}U=D^2_\v U$ on $S$ for every $\v$. Now $P\v_*=(0,1,0,1,0)$ is COMB, so $D^2_{\v_*}U=D^2_{\v_C}U$ on $S$, which is \eqref{eq:combset}. The scaling identity follows from Proposition~\ref{prop:properties}(iii) by differentiating twice: $D^2_\v U(\ell^2\tau,\ell x)=\ell^{-1}D^2_\v U(\tau,x)$; take $\ell=1/\sqrt\tau$. Because $S$ is a cone, $x\notin S$ if and only if $x/\sqrt\tau\notin S$.
\end{proof}

\begin{remark}\label{rem:gapsize}
In numerical evaluations off $S$ the gap is positive but extremely small when the coordinates are spread out relative to $\sqrt\tau$. By the scaling of Proposition~\ref{prop:combset} the whole function is determined by its profile at $\tau=1$, and that profile decays roughly like $e^{-c|x|^2}$. For instance, in $60$-digit arithmetic, at $x=(1.2,0.4,0.3,0.25,0)$ one finds $\Delta(1,x)=2.544\times10^{-2}$; at $x=(2,1.4,0.9,0.35,0)$, $1.3258\times10^{-5}$; and at $x=(2.98,2.63,2.04,1.49,0)$, $2.0696\times10^{-11}$. Correspondingly $\Delta(\tau,x)$ at fixed $x$ increases from below double precision at small $\tau$ to a maximum near $\tau\sim|x|^2$ and then decays faster than $\tau^{-1/2}$: by the scaling, $\sqrt\tau\,\Delta(\tau,x)=\Delta(1,x/\sqrt\tau)\to\Delta(1,0)=0$, since $0\in S$ and $\nabla^2U$ is continuous. Any numerical test of strict positivity must be read with this in mind: at large $|x|/\sqrt\tau$ a genuine strict inequality is indistinguishable from equality in floating point. The integrated statement is however sharp and easy to check, because \eqref{eq:curvtransform} gives
\[
 \int_0^\infty e^{-\tau}\Delta(\tau,x)\,d\tau=D^2_{\v_*}u(x)-D^2_{\v_C}u(x),
\]
the right side being the stationary gap, which is bounded below on compacta away from $S$.
\end{remark}

\begin{remark}\label{rem:positivity}
Inversion also transfers inequalities when the stationary curvature gap has a mode expansion with nonnegative data. Indeed, by \eqref{eq:curvtransform}, if on a conic region
\[
 D^2_{\v_*}u-D^2_\v u=\sum_\iota C_\iota P_\iota(x)e^{-X_\iota(x)}
\]
with $C_\iota\geq0$, $X_\iota\geq0$ linear, $P_\iota\geq0$ homogeneous of degree $p_\iota\in\{0,1\}$, and $X_\iota(x)>0$ whenever $p_\iota=1$, then
\[
 D^2_{\v_*}U-D^2_\v U=\sum_\iota C_\iota P_\iota(x)\Hk_{p_\iota+2}\bigl(X_\iota(x),\tau\bigr)\geq0,
\]
because $\Hk_2=\Gk>0$ and $\Hk_3=\tfrac X{2\tau}\Gk\geq0$ for $X\geq0$. Degrees $p\geq2$ are excluded, since $\Hk_4$ changes sign. In Region III the prefactors are exactly $1$ and $a_4$, of degrees $0$ and $1$, but their coefficients have mixed signs. The corner reduction and the Gaussian certificates of Appendix~\ref{app:region3} handle these cancellations.
\end{remark}

\subsection{The Hamiltonian inequalities}\label{sec:hamiltonian}

So far, we have proved that $U$ solves the linear problem \eqref{eq:frozen} obtained by freezing the direction $\v_*$, that its Laplace transform in $\tau$ is $\lambda^{-3/2}u(\sqrt\lambda\,x)$, that it attains the terminal data, and that it is $C^2$ in space. The additional verification, as in the four-expert work of Bayraktar, Ekren and Zhang~\cite[Section~4.2]{BEZ20b}, concerns the Hamiltonian inequality
\begin{equation}\label{eq:ham}
 D^2_{\v_*}U(\tau,x)\geq D^2_\v U(\tau,x)\qquad\text{for all }\v\in\{0,1\}^5 ,
\end{equation}
which upgrades $U$ from a solution of the frozen linear equation to the solution of \eqref{eq:forward}. It is proved in the following two regional forms, whose proofs occupy Appendices~\ref{app:region3} and~\ref{app:region12}.

\begin{theorem}\label{thm:region3ham}
For every $\tau>0$ and every point of the closed Region III with $a_4>0$,
\[
 D^2_\v U(\tau,x)\leq D^2_{\v_*}U(\tau,x)=2U_\tau(\tau,x),\qquad \v\in\{0,1\}^5 .
\]
For COMB the inequality is strict unless $a_1=a_2=a_3=0$, that is, unless $x_1=x_2$ and $x_3=x_4=x_5$.
\end{theorem}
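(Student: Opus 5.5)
The plan is to reduce the inequality, via the transform identity \eqref{eq:curvtransform}, to statements about the stationary solution that survive Laplace inversion, and then to certify those statements from the Region III mode expansion. Fix $\v$ and set $g_\v(\tau,x)=D^2_{\v_*}U(\tau,x)-D^2_\v U(\tau,x)$. By Theorem~\ref{thm:regularity}(b), $\int_0^\infty e^{-\lambda\tau}g_\v(\tau,x)\,d\tau=\lambda^{-1/2}\bigl(D^2_{\v_*}u-D^2_\v u\bigr)(\sqrt\lambda\,x)$.

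Knowing only that the stationary gap is nonnegative is not enough. What we need is that $\lambda\mapsto\lambda^{-1/2}(D^2_{\v_*}u-D^2_\v u)(\sqrt\lambda\,x)$ is completely monotone, or more concretely that the inverse transform is nonnegative. By the scaling of Proposition~\ref{prop:properties}(iii), $g_\v(\tau,x)=\tau^{-1/2}g_\v(1,x/\sqrt\tau)$, so it suffices to work at $\tau=1$ and all $x$ in the cone.

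The equality $D^2_{\v_*}U=2U_\tau$ is already \eqref{eq:linear-closed}. Complementary controls are identified, and Corollary~\ref{cor:ties} disposes of the tied directions $(1,0,0,0,1)$ and $(1,0,0,1,0)$. That leaves finitely many $\v$, at most $16$ up to complement.

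For each remaining $\v$, compute the Hessian contraction of the Region III expansion \eqref{eq:U3}. Because the exponents $X_{m,\epsilon}$ are linear in $(a_1,a_2,a_3,a_4)$ and the prefactors are $1$ and $a_4$, the result is a series $\sum_{m,\epsilon}\bigl[\gamma_{m,\epsilon}\Hk_2(X_{m,\epsilon},\tau)+\delta_{m,\epsilon}a_4\Hk_3(X_{m,\epsilon},\tau)\bigr]$, with rational coefficients that are quadratic in the components $\partial_\v a_i$. These are the $64$ Hessian contractions, to be checked by exact arithmetic.

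Remark~\ref{rem:positivity} shows that nonnegative coefficients would finish the proof immediately, but they are mixed. So we exploit the structure in $\epsilon$. For fixed $m$, the factor $e^{\epsilon_1a_1+\epsilon_2a_2+\epsilon_3a_3}$ gives, after inversion, the heat kernel in $(a_1,a_2,a_3)$ evaluated at signed shifts, that is, a heat flow started on the cube $\{0\leq a_i\leq a_4\}$.

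The intended corner reduction runs as follows. View $g_\v$ at fixed $a_4$ as the solution of a heat-type evolution in the variables $a_1,a_2,a_3$ with positive exit kernel, so that by a maximum-principle or positive-interpolation argument its values inside the cube are positive averages of its values at the eight corners $a_i\in\{0,a_4\}$. At each corner all exponents collapse to multiples of $a_4$, namely $X=(2m-\sigma)a_4$. The gap then becomes a one-variable lattice Gaussian series in $s=a_4/\sqrt\tau$ of the form $\sum_n(\kappa_n+\kappa_n' s)\,e^{-n^2s^2/4}$ times elementary factors.

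Positivity of these finitely many profiles in $s>0$ is then certified in three ranges. For small $s$, use Poisson summation, turning the lattice sum into a rapidly convergent dual series whose leading term has an explicit positive sign. For large $s$, show that the first nonvanishing mode dominates the tail through an explicit geometric bound. On the middle interval, use outward-rounded interval arithmetic on rational cells, with a rigorous truncation bound for the tail.

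For strictness with COMB, the symmetry argument of Proposition~\ref{prop:combset} gives equality when $a_1=a_2=a_3=0$. Away from that point the corner averaging should carry positive weight on some corner where the COMB profile is strictly positive. The certificate must establish strict positivity there, not just nonnegativity.

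The main obstacle is the corner reduction. I need to show that a positive interpolation exists, i.e.\ that the $a_1,a_2,a_3$ dependence for fixed $m$ is a positive combination of its corner values after inversion. The constraints $a_1\leq a_2\leq a_3\leq a_4$ and the $a_4$-prefactor terms ($\Hk_3$, which is nonnegative only for $X\geq0$) complicate this. My first attempt would be to write each fixed-$m$ block as $\prod_i(\text{kernel in }a_i)$ applied to corner data, and to verify kernel positivity from $\Hk_2,\Hk_3\geq0$.
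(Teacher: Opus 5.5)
Your plan has the same architecture as the paper's proof. You show that the transformed gap $\lambda^{-1/2}(D^2_{\v_*}u-D^2_\v u)(\sqrt\lambda\,x)$ lies in $\CM$ by reducing to the corners of the cube $0\leq a_1,a_2,a_3\leq a_4$. You certify the resulting one-variable lattice Gaussian profiles by Poisson summation, interval cells and first-mode domination. You then invert with Lemma~\ref{lem:cmsign}. Your instinct that the interior is reached from the corners by a positive heat-exit interpolation is also right.

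\textbf{The corner reduction.} You leave this step open, and it is the step the proof rests on. The route you suggest would not close it. Individual modes have corner data of both signs, so a mode-by-mode argument yields nothing. The positivity actually needed is that of the interpolation weights, not of $\Hk_2,\Hk_3$.

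The missing observation is that the whole stationary gap $G_\v=F-q_\v^TD^2_aF\,q_\v$ of \eqref{eq:Gv} satisfies $\partial_{a_i}^2G_\v=G_\v$ for $i=1,2,3$. This holds because, by \eqref{eq:F3}, $F$ is a combination of products of $\cosh a_i$ and $\sinh a_i$ with coefficients depending only on $a_4$. The Hessian contraction has constant coefficients, so it commutes with $\partial_{a_i}^2$.

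Consequences:
\begin{itemize}
\item $\lambda^{-1/2}G_\v(\mu a)$ solves $\partial_{a_ia_i}\widehat g=\lambda\widehat g$ in each variable on the full cube $[0,L]^3$, so two-point interpolation is exact.
\item The weights $\sinh((L-b)\mu)/\sinh(L\mu)$ and $\sinh(b\mu)/\sinh(L\mu)$ lie in $\CM$ by the product formula for $\sinh$; they are Brownian exit-time transforms. This is Lemma~\ref{lem:exit}.
\item The interpolation applies to the whole gap at once. The prefactor $a_4$ is then just a constant, and the sign of $\Hk_3$ plays no role.
\item In $\tau$ this is not an average at fixed time such as $\tau=1$. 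Interior values are time convolutions of corner values with exit-time laws, which is why the reduction is done in the Laplace variable.
\end{itemize}

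Your worry about the ordering $a_1\leq a_2\leq a_3$ also disappears. Formula \eqref{eq:F3} is symmetric in $(a_1,a_2,a_3)$, and permuting $(q_1,q_2,q_3)$ amounts to permuting $(\v_3,\v_4,\v_5)$. So the non-ordered corners give the gaps of the ordered corners $\omega_j$ of \eqref{eq:corner} for permuted controls, leaving $64$ one-variable gaps. The paper further decomposes these into ten generators and the two quantities in \eqref{eq:etalog}, which have manifestly positive representations.

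\textbf{Strictness.} Your strictness argument is incomplete. A strictly positive corner profile carrying a nonzero weight only shows that the COMB gap is not identically zero in $\tau$. To get positivity at every $\tau>0$ you need the second statement of Lemma~\ref{lem:cmsign}.

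Take the ordered corner $\omega_j$ whose zero entries are exactly the indices with $a_i=0$. Its weight is $\widehat\kappa=\prod_{a_i>0}W_1(\lambda;a_i,L)$. This is the transform of a nonnegative measure and satisfies $\widehat\kappa(\lambda)\geq e^{-4L\sqrt\lambda}$ for large $\lambda$, so the measure charges every interval $[0,t)$. The COMB gap at $\omega_1,\omega_2,\omega_3$ is the generator $\gen D_1$, $\gen B_2$ or $\gen P_3$, each with strictly positive inverse. Convolving that inverse with the measure is what gives strict positivity at every $\tau$.
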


\begin{theorem}\label{thm:region12ham}
For every $\tau>0$ and every state of Regions I and II with $z_1>0$, $D^2_\v U(\tau,x)\leq D^2_{\v_*}U(\tau,x)$ for all $\v\in\{0,1\}^5$. The COMB comparison is strict when $z_2>0$ and is an equality when $z_2=0$, that is, when $x_1=x_2$ and $x_3=x_4$.
\end{theorem}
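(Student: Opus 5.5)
The plan is to prove Theorem~\ref{thm:region12ham} through the curvature transform \eqref{eq:curvtransform}, not by estimating the series \eqref{eq:U12} pointwise. By Proposition~\ref{prop:combset} (whose scaling argument applies to every gap, not only COMB) it suffices to take $\tau=1$. By Corollary~\ref{cor:ties} and the identification of complementary controls, only finitely many control classes need a comparison: the $16$ classes modulo complements, minus the ties $(1,0,0,1,1)$ and $(1,0,0,1,0)$. For each remaining $\v$ define the gap
\[
 G_\v(\tau,x)=D^2_{\v_*}U(\tau,x)-D^2_\v U(\tau,x).
\]
By Theorem~\ref{thm:regularity}(b), its Laplace transform is $\lambda^{-1/2}g_\v(\sqrt\lambda\,x)$, where $g_\v=D^2_{\v_*}u-D^2_\v u$ is the stationary gap. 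So $G_\v\geq0$ is equivalent to a complete-monotonicity statement about $g_\v$ along rays, and the task is to exhibit $G_\v$ as a manifestly nonnegative inverse transform.

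The first step is to differentiate the representation \eqref{eq:F12} of the stationary solution directly. I would do this in its integral form, not its mode form, since Remark~\ref{rem:positivity} shows that mode-by-mode positivity fails once prefactors of degree $\geq2$ appear. The background $D^2_\v F_4$ is handled the same way from \eqref{eq:F4}. For the correction, $D^2_\v$ acts on $e^{-2z_4\coth t}\Phi_t(z_1)\Phi_t(z_2)\Phi_t(|z_3|)$ through the fixed linear directions $\partial_\v z_i$. This gives an integral over $t\in(z_1,\infty)$ of the density $\sinh t\,p(t)>0$ times a polynomial in $\coth t$ and in the hyperbolic factors. The dependence on $(z_2,d)$ for fixed $z_1$ enters only through $\sinh(t-z_2)$ and $\sinh(t-d)$. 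These are exactly the exit profiles of Brownian motion from an interval. The positive heat-exit interpolation therefore writes the gap at $(z_1,z_2,d)$ as a nonnegative combination of its values at the four corners $(z_2,d)\in\{0,z_1\}^2$, plus boundary terms of fixed sign. Because this interpolation is realized by positive kernels that are linear in the stationary data and commute with the Laplace transform (the corner weights depend on $x$ only through cone-invariant ratios after scaling), it transfers to $G_\v$. This yields the $64$ corner decompositions, which are checked as exact rational identities.

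The second step removes the transverse variable $z_4$. The factor $e^{-2z_4\coth t}$ and its $\Hk_j$-inverse in \eqref{eq:U12} (the $j$-sum with $(-4z_4)^j/j!$) is an evolution in $z_4$ that preserves positivity. Concretely, I would show that $\partial_{z_4}$ of each corner gap equals a negative multiple of $\coth t$ applied to a positive kernel, so it suffices to establish positivity at $z_4=0$ together with a monotone structure in $z_4$. What remains at each corner, at $\tau=1$, is a function of the single variable $z_1$. It is a lattice Gaussian series in $\Hk_2,\Hk_3$ evaluated at $(2n-\epsilon_2-\epsilon_3)z_1$, with rational coefficients coming from \eqref{eq:ab12} and \eqref{eq:c4D4}. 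These are among the $41$ profiles whose positivity is certified by three methods: Poisson summation for small $z_1$, first-mode domination for large $z_1$, and interval arithmetic on cells in between. I expect this reduction to be the main obstacle. The corner reduction must produce profiles that are genuinely positive, not merely positive after cancellation, and the small-$z_1$ regime is delicate because the series degenerates as $z_1\downarrow0$ (the collision set $\coll$). There I would rely on Poisson summation to convert the lattice sum into a rapidly convergent dual sum whose leading term is explicit.

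For COMB, equality at $z_2=0$, i.e.\ $x_1=x_2$ and $x_3=x_4$, is Proposition~\ref{prop:combset}. For strictness when $z_2>0$, I would isolate the COMB corner profile that carries weight proportional to $z_2$ in the interpolation and certify it as strictly positive, with a positive lower bound on each cell. The interpolation weight on that corner is strictly positive when $z_2>0$, and the other terms are nonnegative, which gives $G_{\v_C}>0$. Continuity from Theorem~\ref{thm:regularity} then covers the boundary points of Regions I and II with $z_1>0$, including the interface $z_3=0$ and the walls where some exponents vanish.
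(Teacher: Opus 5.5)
Your outline has the paper's architecture: complete monotonicity of the transformed gap, exact interpolation in $(z_2,|z_3|)$, removal of $z_4$, and certified one-variable Gaussian profiles. But the two steps that carry the Regions I/II argument do not work as you describe them.

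\textbf{The nonphysical corner.} The square $[0,z_1]^2$ in $(z_2,|z_3|)$ has the corner $(z_2,|z_3|)=(0,z_1)$, which violates $|z_3|\leq z_2$. It is not a state of the sector, so nothing says its gap lies in $\CM$. Since the interpolation is exact, there are no ``boundary terms of fixed sign'' available to absorb it. The missing idea is the symmetry identity \eqref{eq:nonphysical}. It replaces that corner value by the gap of the permuted control $P\v$ at the physical corner $(L,L,0,Z)$, plus, in Region I, the term $h_\v\sinh L$. That term is not completely monotone on its own: its transform $\mu^{-1}h_\v\sinh(\mu L)$ grows. Only together with its weight $W_0(b)W_1(c)$ does it become the Dirichlet resolvent kernel \eqref{eq:green}, which lies in $\CM$ by \eqref{eq:greenCM}. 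Your parenthetical is also wrong: the weights are not cone-invariant ratios. $W_0,W_1$ depend on $\lambda$, and what makes the interpolation transfer is that they are completely monotone exit-time transforms.

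\textbf{The $z_4$ step.} Here the sign logic is reversed. If $\partial_{z_4}$ of a gap were minus a positive quantity, positivity at $z_4=0$ would bound the gap from above, not below. And if the integrand kernels were positive, no flow would be needed at all. In fact the kernels have mixed sign, the alternating series $(-4z_4)^j\Hk_j$ in \eqref{eq:U12} is not positivity preserving term by term, and the three critical gaps \eqref{eq:critical} vanish identically at $z_4=0$. So positivity at $z_4=0$ alone gives nothing. What the proof needs is the following.
\begin{enumerate}
\item The transformed traces $h_m,D_m,H_m,J_m$, and the backgrounds with forcing $F_m$, satisfy closed flows $\partial_Z\widehat f=2(-b_\lambda+\alpha\Rop_m)\widehat f+q$ with $\alpha\geq0$ and $q\in\CM$. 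These come from the eigenrelation $\Kop_j\varphi_r=-b_\lambda(r)\varphi_r$.
\item Such flows preserve $\CM$ (Lemma~\ref{lem:positiveflow}), because $b_\lambda$ is a Bernstein function and $\Rop_m$ has a completely monotone kernel.
\item All $64$ corner gaps are nonnegative combinations of these traces.
\item The gaps that vanish at $Z=0$ are handled through $\partial_Z$ by the triangular cascade of Proposition~\ref{prop:cascade}. Its $m+1$ initial profiles are what actually gets certified.
\end{enumerate}

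\textbf{COMB strictness.} This step is also incomplete. A weight that is positive as a function of $\lambda$ acts in $\tau$ by convolution with a measure, so positivity of the weight is not enough. You must show, as in Lemma~\ref{lem:cmsign}, that $\widehat\kappa=W_1(b)W_0(c)e^{-2Zb_\lambda(L)}\geq e^{-A\sqrt\lambda}$ for large $\lambda$, so that the measure charges every neighborhood of $\tau=0$. You must also show that the flow retains the strictly positive datum, that is, $\widehat D_m(L,Z)-e^{-2Zb_\lambda(L)}\widehat D_m(L,0)\in\CM$.
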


The argument has three parts, which we outline here. The first part is to transform; by \eqref{eq:curvtransform}, for each binary $\v$,
\begin{equation}\label{eq:Fv}
 F_\v(\lambda,x):=2\lambda\widehat U(\lambda,x)-2\phi(x)-D^2_\v\widehat U(\lambda,x)
 =\lambda^{-1/2}\bigl(D^2_{\v_*}u-D^2_\v u\bigr)(\sqrt\lambda\,x),
\end{equation}
which is the Laplace transform of $2U_\tau-D^2_\v U$. We write $\CM$ for the class of functions of $\lambda>0$ that are Laplace transforms of nonnegative measures on $[0,\infty)$; by Bernstein's theorem $\CM$ is the class of completely monotone functions, and it is closed under sums, products, pointwise limits, and integration against nonnegative weights. So \eqref{eq:ham} is equivalent to the statement that each transformed gap \eqref{eq:Fv} lies in $\CM$. That is what we certify. It is a statement about the stationary solution $u$, which is globally $C^2$~\cite{CD26}; the finite-horizon regularity never enters. The passage back from complete monotonicity to the inequality is the following elementary fact, which is where the appendices end. Its second statement gives the strict comparisons for COMB: a positive transform alone shows only that a gap does not vanish identically in $\tau$, and strict positivity at every $\tau>0$ needs a strictly positive summand whose weight charges every neighborhood of $\tau=0$.

\begin{lemma}\label{lem:cmsign}
Let $h$ be continuous on $(0,\infty)$ with $|h(\tau)|\leq C(1+\tau^{-1/2}+\tau^N)$ for some $N$, and suppose that its Laplace transform $\widehat h(\lambda)=\int_0^\infty e^{-\lambda\tau}h(\tau)\,d\tau$ is completely monotone. Then $h\geq0$. If moreover $\widehat h-\widehat\kappa\,\widehat g$ is completely monotone, where $\widehat\kappa$ is the Laplace transform of a nonnegative measure $\kappa$ on $[0,\infty)$ with $\widehat\kappa(\lambda)\geq e^{-A\sqrt\lambda}$ for some constant $A$ and all sufficiently large $\lambda$, and $\widehat g$ is the Laplace transform of a function $g$ that is continuous and strictly positive on $(0,\infty)$, then $h>0$ on $(0,\infty)$.
\end{lemma}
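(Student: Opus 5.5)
For the first statement I would use Bernstein's theorem: $\widehat h=\int_{[0,\infty)}e^{-\lambda\tau}\,d\mu(\tau)$ for some nonnegative measure $\mu$. The growth bound makes $h(\tau)\,d\tau$ a signed measure whose Laplace transform converges absolutely for every $\lambda>0$. So $h\,d\tau$ and $\mu$ are two locally finite signed measures with the same Laplace transform on $(0,\infty)$. By uniqueness of the Laplace transform for measures, applied to $e^{-\lambda_0\tau}$ times each measure to obtain finite measures, the two coincide. Hence $\mu$ is absolutely continuous with density $h$. Since $\mu\geq0$, $h\geq0$ almost everywhere, and continuity gives $h\geq0$ everywhere.

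For the strict statement I would apply the first part to $h-\kappa*g$. More precisely, by Bernstein's theorem $\widehat h-\widehat\kappa\,\widehat g=\widehat\nu$ for a nonnegative measure $\nu$. Then $h\,d\tau=\nu+\kappa*(g\,d\tau)$ as measures, again by uniqueness of transforms, since both sides have absolutely convergent transforms for $\lambda>0$. The measure $\kappa*(g\,d\tau)$ has density $(\kappa*g)(\tau)=\int_{[0,\tau)}g(\tau-s)\,d\kappa(s)\geq0$, and $\nu\geq0$. Hence $h\geq\kappa*g$ almost everywhere. To pass to a pointwise bound I would use lower semicontinuity: $\kappa*g$ is lower semicontinuous by Fatou, with $g$ extended by $0$ on $(-\infty,0]$, which is lower semicontinuous because $g>0$. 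Since $h$ is continuous and $h\geq\kappa*g$ a.e., $h(\tau)\geq\liminf_{\sigma\to\tau}(\kappa*g)(\sigma)\geq(\kappa*g)(\tau)$ everywhere. It then suffices to show $(\kappa*g)(\tau)>0$ for every $\tau>0$. This is exactly the statement that $\kappa$ charges $[0,\tau)$, because $g(\tau-s)>0$ for $s<\tau$.

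The key step, which I expect to be the main obstacle, is to show that $\kappa([0,\epsilon))>0$ for every $\epsilon>0$, using $\widehat\kappa(\lambda)\geq e^{-A\sqrt\lambda}$. Suppose instead that $\kappa$ vanishes on $[0,\epsilon)$. Then $\widehat\kappa(\lambda)=\int_{[\epsilon,\infty)}e^{-\lambda\tau}\,d\kappa\leq e^{-(\lambda-\lambda_0)\epsilon}\widehat\kappa(\lambda_0)$ for $\lambda\geq\lambda_0$, which decays like $e^{-\epsilon\lambda}$. This contradicts the lower bound $e^{-A\sqrt\lambda}$ for large $\lambda$. The remaining care is the measure-theoretic bookkeeping: finiteness of $\widehat\kappa(\lambda_0)$ holds because $\widehat\kappa$ is a Laplace transform that is finite on $(0,\infty)$, and $\kappa*g$ must be well defined, which follows from the absolute convergence of $\widehat\kappa\,\widehat g$ at each $\lambda$ via Tonelli. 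With these checks in place, $h>0$ on $(0,\infty)$.
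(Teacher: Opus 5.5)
Your proposal is correct and follows essentially the paper's proof: Bernstein's theorem plus uniqueness of the Laplace transform for the first statement, then the same charging argument for $\kappa([0,t))>0$ and the same a.e.\ inequality $h\geq\kappa*g$. The only difference is the last step. You pass to a pointwise bound using the lower semicontinuity of $\kappa*g$ (via Fatou), while the paper bounds $\kappa*g\geq m\,\kappa([0,\tau_0/2))$ near $\tau_0$, with $m$ the minimum of $g$ on $[\tau_0/4,5\tau_0/4]$; both routes are valid.
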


\begin{proof}
By Bernstein's theorem the transform of $h$ is the transform of a nonnegative measure $\nu$ on $[0,\infty)$, and the growth bound makes $h\,d\tau$ a locally finite signed measure with a Laplace transform for every $\lambda>0$. Uniqueness of Laplace transforms of such measures gives $h\,d\tau=\nu$ on $(0,\infty)$, so $h\geq0$ almost everywhere, hence everywhere by continuity.

For the second statement we first note that $\kappa$ charges every interval $[0,t)$ with $t>0$. Indeed, if $\kappa([0,t))=0$, then $\widehat\kappa(\lambda)=\int_{[t,\infty)}e^{-\lambda s}\,\kappa(ds)\leq e^{-(\lambda-1)t}\,\widehat\kappa(1)$ for $\lambda\geq1$, which is incompatible with $\widehat\kappa(\lambda)\geq e^{-A\sqrt\lambda}$ as $\lambda\to\infty$. Next, $\widehat\kappa\,\widehat g$ is the transform of the function $\kappa*g(\tau)=\int_{[0,\tau)}g(\tau-s)\,\kappa(ds)$, and $\widehat h-\widehat\kappa\,\widehat g$ is the transform of a nonnegative measure, so the uniqueness argument above gives $h\geq\kappa*g$ almost everywhere. Now fix $\tau_0>0$ and let $m>0$ be the minimum of $g$ on $[\tau_0/4,5\tau_0/4]$. If $|\tau-\tau_0|\leq\tau_0/4$ and $0\leq s<\tau_0/2$, then $\tau-s\in[\tau_0/4,5\tau_0/4]$, hence
\[
 \kappa*g(\tau)\geq m\,\kappa\bigl([0,\tau_0/2)\bigr)>0 .
\]
Thus $h$ is bounded below by a positive constant almost everywhere near $\tau_0$, and $h(\tau_0)>0$ by continuity.
\end{proof}

Every membership in $\CM$ that Appendices~\ref{app:region3} and~\ref{app:region12} establish is obtained in representation form: the function is exhibited as the Laplace transform of a nonnegative measure, an exit-time law, a Gamma density or a certified profile, or as a product, an integral or a pointwise limit of such transforms. Bernstein's theorem is therefore used only for convenience, in Lemma~\ref{lem:cmsign} and in the closure of $\CM$ under limits; a treatment that carries the measures themselves needs only the uniqueness theorem for the Laplace transform and its continuity theorem, which is the route the Lean formalization described at the end of Section~\ref{sec:hamiltonian} takes. Appendices~\ref{app:region3} and~\ref{app:region12} prove that each transformed gap \eqref{eq:Fv} lies in $\CM$ at every state of the ordered sector with $a_4>0$, respectively $z_1>0$, and Lemma~\ref{lem:cmsign} turns this into Theorems~\ref{thm:region3ham} and~\ref{thm:region12ham}.

The second part is the reduction to finitely many scalar profiles. Both regions admit an exact interpolation identity that writes the transformed gap at a general state as a combination, with completely monotone weights, of its values at finitely many distinguished corner states, plus, in Region I, one explicitly positive Dirichlet kernel. In Region III (Appendix~\ref{app:region3}) the interpolation is over a cube in $(a_1,a_2,a_3)$, and the $64$ corner gaps are already functions of the single variable $a_4$, ten of which generate the rest. In Regions I and II (Appendix~\ref{app:region12}) the interpolation is over a square in $(z_2,|z_3|)$ and leaves $64$ corner gaps in four two-variable families; positive evolutions in the transverse variable $z_4$ reduce those to a further $31$ one-variable profiles. Each of the resulting $41$ profiles is a Gaussian series with rational coefficients.

The third part certifies the scalar signs on the whole axis. Each profile has the form
\begin{equation}\label{eq:profile}
 \Gamma(\xi)=\tfrac12\sum_{k\equiv p\,(2)}\bigl[A(k)+\xi\,C(k)\bigr]e^{-k^2\xi},
 \qquad\xi=\frac{L^2}{4\tau},
\end{equation}
with $A,C$ even rational functions of $k$ whose poles lie off the summation lattice. Positivity is proved by Poisson summation for $0<\xi\leq1/100$, by first-mode domination for $\xi\geq1$, and by outward-rounded interval arithmetic on $[1/100,1]$, with an explicit bound for the infinite tail. These are proofs over ranges, not tests at sample points. The proof of Theorem~\ref{thm:main} in Section~\ref{sec:proofs} then extends \eqref{eq:ham} from the two regional statements to every state, by the continuity of the Hessian.

The supplement~\cite{CD26Supp} contains fourteen certificate scripts, run in dependency order by \nolinkurl{run_certificates.py} at the top level of the archive; a clean run takes a few minutes with SymPy~1.14 and mpmath~1.3, and prints
\begin{center}
\texttt{PASS: all 14 steps, 64 physical corner controls, and 1616 compact intervals.}
\end{center}
The counts are $807$ intervals for the Region III generators, $251$ for nine of the ten critical initial profiles, $513$ for the twenty further derivative profiles and $45$ for the last boundary profile; the tenth critical profile, the endpoint datum of Appendix~\ref{app:cascades}, is certified by an exact polynomial argument and needs no cells, so $40$ of the $41$ profiles carry interval certificates. The runner re-derives every exact rational datum from scratch rather than reading it back, checks that the interval partitions have neither gaps nor overlaps, and checks that every stored margin is positive. A second program, \nolinkurl{witness/check.py}, re-checks the same certificates from their stored witnesses using only the Python standard library: it rebuilds the sixty-four Region I/II corner expressions from the geometric construction, re-derives the $106$ exact identities of the corner audit, and re-certifies all $1616$ cells, recomputing every mode coefficient from its rational formula, proving the coefficient bound behind the truncated tail, and recomputing each Taylor bound and every exponential from its series rather than trusting a stored margin. 

Finally, the proofs of Theorems~\ref{thm:main} and~\ref{thm:comb} have been formalized in the Lean~4 proof assistant against Mathlib, using only Lean's standard axioms and no hypotheses beyond those in the statements. The development is in the directory \nolinkurl{witness/lean} of the supplement; it builds on the formalization of the companion paper~\cite{CD26Lean}, which the supplement includes and from which it takes the properties of the stationary solution. For each of the $1616$ cells the supplement generates a Lean theorem stating that the truncated profile, with its actual rational coefficients written out, is positive on the cell whenever the truncated tail is bounded by the constant of Appendix~\ref{app:region3}; the midpoint value, the derivative bounds and the top-derivative majorant are all proved from shared rational enclosures of $e^{-x}$, and the compact-cell criterion itself is proved in Lean at every order the certificates use. On top of the cells, the development proves Theorems~\ref{thm:main} and~\ref{thm:comb} for the contour representation \eqref{eq:Wdef} of $U$, including the regularity of Section~\ref{sec:collision}, the reductions of Appendices~\ref{app:region3} and~\ref{app:region12}, and the small-scale and large-scale ranges of every profile, and it proves that this function is the function $U$ defined in Theorem~\ref{thm:main}. Each clause of the two theorems is then restated for the explicit formula in the module \nolinkurl{PaperIndex}, the file \nolinkurl{witness/lean/Fhcorner/PaperIndex.lean} of the supplement, which Table~\ref{tab:lean} lists, so that every clause can be compared with its formal statement in one place. The rest of the paper is formalized only as far as these proofs use it. In particular Theorem~\ref{thm:regularity}(a), Corollary~\ref{cor:ties} and the statements of Theorem~\ref{thm:regularity} about time derivatives other than $U_\tau$ are proved here only, and the limit of $V^M(0)/\sqrt M$ stated after Theorem~\ref{thm:main} is the cited theorem of Drenska and Kohn~\cite{DK20}. The detailed notes in the supplement record the derivations and the exact coefficient data behind Appendices~\ref{app:region3} and~\ref{app:region12}, and Appendix~\ref{app:numerics} reports the independent numerical checks, which are not part of the proof.

\begin{table}[t]
\centering\small
\begin{tabular}{@{}lll@{}}\toprule
Clause & Content & Lean theorem in \texttt{PaperIndex}\\\midrule
Thm.~\ref{thm:main} & contour representation $=$ formula & \texttt{contour\_eq\_formula}\\
 & absolute convergence of \eqref{eq:U3} & \texttt{formula\_summable\_III}\\
 & absolute convergence of \eqref{eq:U12} & \texttt{formula\_summable\_I\_II}\\
 & integrability in \eqref{eq:face} & \texttt{formula\_integrable\_coll}\\
 & derivatives of every order & \texttt{formula\_derivatives}\\
 & sum differentiated term by term, Region III & \texttt{formula\_termwise\_III}\\
 & the same in Regions I and II & \texttt{formula\_termwise\_I\_II}\\
(i) & $U(\tau,\cdot)\in C^2(\R^5)$ & \texttt{thm\_2\_1\_i\_C2}\\
 & $U_\tau=\tfrac12\max_\v D^2_\v U$ & \texttt{thm\_2\_1\_i\_equation}\\
 & joint continuity of $U$, $U_\tau$, $\nabla U$, $\nabla^2U$ & \texttt{thm\_2\_1\_i\_continuous}\\
 & viscosity solution & \texttt{thm\_2\_1\_i\_viscosity}\\
 & limit as $\tau\downarrow0$ & \texttt{thm\_2\_1\_i\_initial}\\
 & $|U-\phi|\leq C\sqrt\tau$ & \texttt{thm\_2\_1\_i\_terminal}\\
(ii) & Hamiltonian maximum at $\v_*$ & \texttt{thm\_2\_1\_ii}\\
(iii) & parabolic scaling & \texttt{thm\_2\_1\_iii\_scaling}\\
 & Laplace transform & \texttt{thm\_2\_1\_iii\_laplace}\\
 & value and Hessian on the diagonal & \texttt{thm\_2\_1\_iii\_diagonal}\\
Thm.~\ref{thm:comb} & COMB equality set, strictness, scaling & \texttt{thm\_2\_2}\\\bottomrule
\end{tabular}
\caption{The Lean statements of Theorems~\ref{thm:main} and~\ref{thm:comb}, all about the explicit formula of Theorem~\ref{thm:main}. Each depends only on Lean's standard axioms.}\label{tab:lean}
\end{table}

\subsection{Proofs of the main theorems}\label{sec:proofs}

\begin{proof}[Proof of Theorem~\ref{thm:main}]
The convergence statements are Lemma~\ref{lem:convergence}(a) and~(b). For (i), the spatial regularity is Theorem~\ref{thm:regularity} and the bound $|U-\phi|\leq C\sqrt\tau$ is Theorem~\ref{thm:regularity}(d). At a state of Region III with $a_4>0$ the inequality \eqref{eq:ham} is Theorem~\ref{thm:region3ham}, and at a state of Region I or II with $z_1>0$ it is Theorem~\ref{thm:region12ham}. Every other state $x$ of the ordered sector has $z_1=0$, since in Region III $0\leq a_1\leq a_2\leq a_3\leq a_4=0$ forces $y=0$. The states $x+t(4,3,2,1,0)$ with $t>0$ are strictly ordered and have $z_1=4t/k>0$, so each of them lies in Region III with $a_4>0$ or in Region I or II with $z_1>0$, and letting $t\downarrow0$ gives \eqref{eq:ham} at $x$ by the continuity of the Hessian. Thus \eqref{eq:ham} holds on the closed sector. Together with \eqref{eq:linear-closed} this gives
\[
 U_\tau=\tfrac12D^2_{\v_*}U=\tfrac12\max_{\v\in\{0,1\}^5}D^2_\v U
\]
on the closed sector. Since $U$ is invariant under permutations of the coordinates, which permute the binary controls, the same equation holds on the image of the sector under every permutation, that is, on $(0,\infty)\times\R^5$. Its right side is jointly continuous by Theorem~\ref{thm:regularity}, and hence so is $U_\tau$. Thus $U$ is a classical solution of \eqref{eq:forward}, and a classical solution is a viscosity solution. Statement (ii) is \eqref{eq:ham} together with \eqref{eq:linear-closed}. For (iii), the scaling and the transform identity are Proposition~\ref{prop:properties}(iii) and (iv), the value at the origin is Proposition~\ref{prop:properties}(i) with $u(0)=45\pi^2/(512\sqrt2)$~\cite[Theorem~2.1]{CD26}, and the Hessian at the origin is Theorem~\ref{thm:regularity}(c).
\end{proof}

\begin{proof}[Proof of Theorem~\ref{thm:comb}]
By Theorem~\ref{thm:main}(ii), $\Delta\geq0$ at every state of the ordered sector. Equality on the set \eqref{eq:comb-set} is Proposition~\ref{prop:combset}. Strict positivity off that set is Theorem~\ref{thm:region3ham} in Region III, where the set \eqref{eq:comb-set} meets the region exactly in $\{a_1=a_2=a_3=0\}$, because $y_1=a_1+a_2+a_3$ vanishes only when all three do, and Theorem~\ref{thm:region12ham} in Regions I and II, where $z_2=\tfrac12(y_1+y_3)$ vanishes exactly on \eqref{eq:comb-set}; the states of Regions I and II with $z_1=0$ and the origin of Region III lie in $\coll\subset\{x_1=x_2,\ x_3=x_4\}$ and are covered by the equality statement.
\end{proof}

%% file: appendix.tex
\section{The Region III certificate}\label{app:region3}

Throughout the appendices $\mu=\sqrt\lambda$ denotes the square root of the Laplace variable; the constant $k=\sqrt2$ of Section~\ref{sec:main} does not appear again. We write $\CM$ for the class of completely monotone functions of $\lambda>0$, as in Section~\ref{sec:hamiltonian}. By \eqref{eq:Fv}, proving \eqref{eq:ham} means proving that each transformed gap lies in $\CM$.

\subsection{Normalization and positive heat-exit interpolation}\label{app:interpolation}

Recall $y_i=k(x_i-x_{i+1})$ and the Region III coordinates $0\leq a_1\leq a_2\leq a_3\leq a_4=:L$ of \eqref{eq:coords}. For a control $\v\in\{0,1\}^5$ put
\begin{equation}\label{eq:qv}
 q_\v=\frac13\begin{pmatrix}
  1&-1&-1&-1&2\\ 1&-1&-1&2&-1\\ 1&-1&2&-1&-1\\ 1&2&-1&-1&-1
 \end{pmatrix}\v,
 \qquad\text{so that}\qquad D_\v a=k\,q_\v .
\end{equation}
Up to the identification of Section~\ref{sec:main} there are sixteen controls, and we name each by its representative with first bit $0$, as the supplement does. Since $u=x_1+F/k$ and $x_1$ is linear,
\begin{equation}\label{eq:Gv}
 D^2_{\v_*}u-D^2_\v u=k\,G_\v,\qquad G_\v=F-q_\v^TD^2_aF\,q_\v ,
\end{equation}
and $q_{\v_*}=(0,0,1,0)$, so that $G_{\v_*}=F-\partial_{a_3}^2F=0$ as it must be. Because $\partial_{a_i}^2F=F$ for $i=1,2,3$ and partial derivatives commute,
\begin{equation}\label{eq:GvODE}
 \partial_{a_i}^2G_\v=G_\v,\qquad i=1,2,3 .
\end{equation}

For $0\leq b\leq r$ set
\begin{equation}\label{eq:W}
 W_0(\lambda;b,r)=\frac{\sinh((r-b)\mu)}{\sinh(r\mu)},
 \qquad
 W_1(\lambda;b,r)=\frac{\sinh(b\mu)}{\sinh(r\mu)} .
\end{equation}
Both weights lie in $\CM$. Indeed, by the product formula $\sinh z=z\prod_{n\geq1}(1+z^2/(\pi^2n^2))$,
\[
 W_1(\lambda;b,r)=\frac br\prod_{n\geq1}\Bigl[\theta+\frac{1-\theta}{1+r^2\lambda/(\pi^2n^2)}\Bigr],
 \qquad\theta=\frac{b^2}{r^2}\in[0,1],
\]
a limit of products of functions of the form $\theta+(1-\theta)/(1+c\lambda)$, each in $\CM$, and $W_0(\lambda;b,r)=W_1(\lambda;r-b,r)$. Probabilistically, $W_i=\E_b[e^{-\lambda\sigma};B_\sigma=ir]$ for a Brownian motion with generator $\partial_{bb}$ started at $b$ and its first exit time $\sigma$ from $(0,r)$; at the endpoints the corresponding measure has an atom at time zero.

\begin{lemma}\label{lem:exit}
Let $g_\lambda$ solve $\partial_{b_ib_i}g_\lambda=\lambda g_\lambda$ for $i=1,2,3$ on the cube $[0,r]^3$. Then
\begin{equation}\label{eq:interpolation}
 g_\lambda(b)=\sum_{\epsilon\in\{0,1\}^3}g_\lambda(r\epsilon)
   \prod_{i=1}^3W_{\epsilon_i}(\lambda;b_i,r).
\end{equation}
In particular, if every corner value $g_\lambda(r\epsilon)$ lies in $\CM$, so does $g_\lambda(b)$ at every point of the cube.
\end{lemma}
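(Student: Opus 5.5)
The plan is the usual tensor-product argument: interpolate one coordinate at a time, using a uniqueness statement for the one-dimensional two-point boundary problem $h''=\lambda h$ on $[0,r]$. For fixed $\lambda>0$, with $\mu=\sqrt\lambda$, the solutions of this equation form a two-dimensional space spanned by $\sinh(b\mu)$ and $\sinh((r-b)\mu)$. Since $\sinh(r\mu)\neq0$, the unique solution with prescribed values $h(0),h(r)$ is
\[
 h(b)=h(0)\,W_0(\lambda;b,r)+h(r)\,W_1(\lambda;b,r).
\]
This follows because $W_0$ and $W_1$ both solve the equation, $W_0(\lambda;0,r)=1$, $W_0(\lambda;r,r)=0$, and symmetrically $W_1(\lambda;0,r)=0$, $W_1(\lambda;r,r)=1$. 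Uniqueness holds because any solution vanishing at both endpoints is a multiple of $\sinh(b\mu)$ that vanishes at $b=r$, hence is zero.

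I then apply this three times. First fix $b_2,b_3$ and regard $g_\lambda$ as a function of $b_1\in[0,r]$; the hypothesis $\partial_{b_1b_1}g_\lambda=\lambda g_\lambda$ gives
\[
 g_\lambda(b)=\sum_{\epsilon_1\in\{0,1\}}g_\lambda(r\epsilon_1,b_2,b_3)\,W_{\epsilon_1}(\lambda;b_1,r).
\]
Each face function $g_\lambda(r\epsilon_1,\cdot,\cdot)$ still satisfies the equations in $b_2$ and $b_3$, since those derivatives commute with the restriction $b_1=r\epsilon_1$. I repeat the argument in $b_2$ and then in $b_3$. Collecting terms produces the eight corner values multiplied by the product weights $\prod_iW_{\epsilon_i}(\lambda;b_i,r)$, which is \eqref{eq:interpolation}.

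The only regularity needed is that $g_\lambda$ is $C^2$ in each variable separately up to the boundary of the cube, so that restrictions to faces make sense. I take this as implicit in the hypothesis, since the lemma is applied to the functions $G_\v(\sqrt\lambda\,\cdot)$ built from the smooth Region III formula, and these satisfy \eqref{eq:GvODE} after scaling. At the endpoints $b_i\in\{0,r\}$ the weights reduce to $0$ or $1$, so the identity also holds on the boundary.

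For the final claim I use the fact shown just before the lemma that $W_0,W_1\in\CM$. Since $\CM$ is closed under products and sums, every product weight lies in $\CM$, and multiplying by a corner value in $\CM$ and summing the eight terms stays in $\CM$. No step here is a genuine obstacle. The one point to watch is that the weights depend on $\lambda$ but not on the corner values, so the statement is a pointwise identity in $\lambda$ to which the closure properties apply directly.
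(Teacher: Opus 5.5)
Your proposal is correct and follows the paper's proof. Both use the one-dimensional two-point uniqueness for $h''=\lambda h$ with the weights $W_0,W_1$, apply it in each variable in turn, and finish with the closure of $\CM$ under products and nonnegative combinations. Your explicit uniqueness argument and your regularity remark only fill in details the paper leaves implicit.
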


\begin{proof}
In one variable the two-point boundary value problem $g''=\lambda g$ on $[0,r]$ has the unique solution $g(b)=g(0)W_0+g(r)W_1$, since $W_0,W_1$ solve the equation, and $(W_0,W_1)=(1,0)$ at $b=0$ and $(0,1)$ at $b=r$. Applying this in each variable in turn gives \eqref{eq:interpolation}. The last statement holds because $\CM$ is closed under products and nonnegative combinations.
\end{proof}

By \eqref{eq:GvODE}, the scaled gap $\widehat\gamma_\v(\lambda,x)=\lambda^{-1/2}G_\v(\mu a)$ satisfies the hypotheses of Lemma~\ref{lem:exit} in the variables $(a_1,a_2,a_3)$ on the cube $[0,L]^3$. Hence positivity of the inverse transforms at the eight corners $a\in\{0,L\}^3$ implies positivity throughout the cube, and in particular throughout the physical simplex $0\leq a_1\leq a_2\leq a_3\leq L$. The corner with $j$ entries equal to $L$ is
\begin{equation}\label{eq:corner}
 \omega_j=L\cdot(\underbrace{0,\dots,0}_{3-j},\underbrace{1,\dots,1}_{j}),
 \qquad j=0,1,2,3,
\end{equation}
the other corners being permutations of these, which give the same gaps after the corresponding permutation of the control.

\subsection{The corner gaps as Gaussian series}\label{app:cornerseries}

Fix a corner $j$ and a control $\v$. The gap $G_\v(\omega_j,L)$ is a function of the single variable $L$, and the object to be signed is its time inverse. Define $\Gamma=\Gamma_{j,\v}$ by
\begin{equation}\label{eq:gamma}
 \Lap^{-1}\bigl[\lambda^{-1/2}G_\v(\mu\,\omega_j,\mu L)\bigr](\tau)
 =\frac{\Gamma(\xi)}{\sqrt{\pi\tau}},\qquad \xi=\frac{L^2}{4\tau} .
\end{equation}

\begin{proposition}\label{prop:cornerlattice}
For each of the ten generators $\gen E_0,\gen V_0,\gen C_1,\gen D_1,\gen T_1,\gen A_2,\gen B_2,\gen T_2,\gen P_3,\gen T_3$ listed in Table~\ref{tab:cells} there are rational functions $a(k)$ even and $b(k)$ odd, with all poles at integers of the parity opposite to $j$ and of modulus at most $4$, such that
\begin{equation}\label{eq:lattice}
 \Gamma(\xi)=\tfrac12\sum_{k\in\Z,\ k\equiv j\,(2)}\bigl(a(k)+2kb(k)\,\xi\bigr)e^{-k^2\xi} .
\end{equation}
In partial fractions in the variable $k^2$, $a$ has a polynomial part of degree at most two with simple and double poles, and $2kb$ a polynomial part of degree at most three with simple poles.
\end{proposition}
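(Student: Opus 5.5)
The plan is to compute $G_\v$ at the corner $\omega_j$ from the Region III mode expansion of Proposition~\ref{prop:region3}, invert it term by term with the kernels $\Hk_2,\Hk_3$, and then fold the double index $(m,\epsilon)$ into a single lattice index $k$. By \eqref{eq:Gv}, $G_\v=F-q_\v^TD^2_aF\,q_\v$. Acting on a mode $(c^a+c^bL)e^{-X_{m,\epsilon}}$ with $X_{m,\epsilon}=2mL-\epsilon\cdot a$, each derivative $\partial_{a_i}$ for $i\leq3$ multiplies by $\epsilon_i$. The derivative $\partial_L=\partial_{a_4}$ multiplies by $-2m$ and differentiates the prefactor $L$. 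So $q^TD^2_aF\,q$ is again a mode series with exponents $X_{m,\epsilon}$ and prefactors in $\{1,L\}$, and its coefficients are quadratic polynomials in $m$ and $\epsilon$ built from $q_\v$.

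At the corner $\omega_j$, the exponent $X_{m,\epsilon}$ becomes $(2m-\sum_{i>3-j}\epsilon_i)L$, which equals $kL$ with $k\equiv j \pmod 2$. The inner sums over $\epsilon$ then regroup all modes by the integer $k$. After the scaling $a\mapsto\mu a$ and the factor $\lambda^{-1/2}$, a mode $C e^{-kL\mu}$ contributes $C\lambda^{-1/2}e^{-kL\mu}$ and a mode $C' L\,e^{-kL\mu}$ contributes $C'L\,e^{-kL\mu}$. These are $\Lap$ of $C\,\Gk(kL,\tau)$ and $C'L\,\Hk_3(kL,\tau)=C'L\frac{kL}{2\tau}\Gk$ by Lemma~\ref{lem:kernels}. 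Since $\Gk=(\pi\tau)^{-1/2}e^{-k^2\xi}$ and $\frac{kL^2}{2\tau}=2k\xi$, this gives exactly the shape \eqref{eq:lattice}. The coefficient of $e^{-k^2\xi}$ is $a(k)$ (from $c^a$-type terms) plus $2k\,b(k)\,\xi$ (from $c^b$-type terms).

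Next I would extend to negative $k$. The finitely many low modes with $X\leq 0$ or degenerate exponents are handled by Lemma~\ref{lem:vanishing}. I would write $\Gamma$ as a sum over $k\geq0$ and symmetrize to $\frac12\sum_{k\in\Z}$ by defining $a$ even and $b$ odd. The key check is that the $k=0$ and small-$|k|$ terms match the actual low-order coefficients. Where they do not, those finitely many terms are absorbed by verifying the identity at each small $k$.

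To obtain the rational functions, I would substitute $m=(k+\sum\epsilon_i)/2$ into the closed forms \eqref{eq:AB}, \eqref{eq:JK}, \eqref{eq:alphabeta}. These are rational in $m$ with denominators $(4m^2-1)^{1,2}$. The shift by at most $3/2$ per sign sum puts the poles at $2m=\pm1$, that is, at integers $k$ of opposite parity to $j$ with $|k|\leq4$. I would then sum the finitely many $\epsilon$ contributions for each fixed $k$ and do the partial fraction decomposition in $k^2$. The degree bounds follow from $A_m=O(m^2)$, $B_m=O(m^3)$ and their structure: double poles come only from $(4m^2-1)^{-2}$ in $A_m$, while $B_m$ has only simple poles.

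The main obstacle is showing that the $\epsilon$-sum at fixed $k$ is a single rational function valid for all $k$, including the small ones where Lemma~\ref{lem:vanishing} zeroes out coefficients, and that the result is even/odd. I would settle this by exact symbolic computation for each of the ten generators, as in the supplement's audits, checking the finitely many exceptional $k$ directly.
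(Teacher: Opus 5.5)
Your plan is essentially the paper's construction: contract the Hessian mode by mode as in \eqref{eq:cornercoeff}, collect the modes at $\omega_j$ by $k=2m-\sum_i\epsilon_i(\omega_j)_i/L$, substitute $m=\bigl(k+\sum_i\epsilon_i(\omega_j)_i/L\bigr)/2$ into the closed forms, check that the finitely many modes with $m\leq0$ either vanish or reproduce the generic formula (with half weight at $k=0$; there is no room to ``absorb'' a mismatch), and invert with $\Hk_2=\Gk$ and $\Hk_3=\tfrac X{2\tau}\Gk$ before taking the even extension. As in the paper, the partial fractions and degree bounds are left to exact computation, which the paper assigns to its audit script, and your parity of $a$ and $b$ need not be imposed by definition or checked by machine: under $(m,\epsilon)\mapsto(-m,-\epsilon)$ one has $k\mapsto-k$, $d\mapsto-d$ and $s_j(-\epsilon)=(-1)^js_j(\epsilon)$, while $\alpha^{(j)}_m$ and $\beta^{(j)}_m$ have parities $(-1)^j$ and $(-1)^{j+1}$ in $m$, so the constant-prefactor coefficients in \eqref{eq:cornercoeff} are invariant and the $L$-coefficients change sign.
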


\begin{proof}[Construction]
Write the Region III expansion of Proposition~\ref{prop:region3} as $F=\sum_{m\geq0}\sum_\epsilon(c_{m,\epsilon}+Ld_{m,\epsilon})e^{-2mL+\epsilon\cdot a}$, with $c_{m,\epsilon}=\tfrac18\sum_\ell s_\ell(\epsilon)\alpha^{(\ell)}_m$ and $d_{m,\epsilon}=\tfrac18\sum_\ell s_\ell(\epsilon)\beta^{(\ell)}_m$ in the notation of \eqref{eq:alphabeta}. For a control $\v$ put $d=2m(q_\v)_4-\sum_{i\leq3}\epsilon_i(q_\v)_i$. Contracting the Hessian as in \eqref{eq:Gv}, and using that each $\partial_{a_i}$ acts on $e^{\epsilon\cdot a}$ by $\epsilon_i$ and $\partial_{a_4}$ on $e^{-2mL}$ by $-2m$, the mode $(m,\epsilon)$ of $G_\v$ has coefficients
\begin{equation}\label{eq:cornercoeff}
 c_{m,\epsilon}(1-d^2)+2d_{m,\epsilon}(q_\v)_4\,d,
 \qquad
 d_{m,\epsilon}(1-d^2).
\end{equation}
At the corner $\omega_j$ these modes collect according to $k=2m-\sum_i\epsilon_i(\omega_j)_i/L$, and substituting $m=\bigl(k+\sum_i\epsilon_i(\omega_j)_i/L\bigr)/2$ into \eqref{eq:cornercoeff} gives $a(k)$ and $b(k)$ for generic $k$. The finitely many modes with $m=0$ or $m<0$ are treated separately; every one of them either has zero coefficient or reproduces the generic formula, with the mode $k=0$ carrying half weight. This is the content of the script \nolinkurl{audit_corner_algebra.py}, which also re-derives all $64$ Hessian contractions independently from the hyperbolic form \eqref{eq:F3} and checks the partial fractions.

Given $a,b$, formula \eqref{eq:lattice} follows from \eqref{eq:gamma}: if $G_\v=\sum_{k>0}[a(k)+Lb(k)]e^{-kL}$ (plus $\tfrac12a(0)$ when $j$ is even), then $\lambda^{-1/2}G_\v(\mu L)=\sum_k[a(k)\lambda^{-1/2}+Lb(k)]e^{-kL\mu}$, whose inverse transform is $\sum_k[a(k)\Hk_2+Lb(k)\Hk_3](kL,\tau)$ by \eqref{eq:Hdef}; since $\Hk_2=\Gk$ and $\Hk_3=\tfrac{X}{2\tau}\Gk$ and $kL^2/(2\tau)=2k\xi$, this is $(\pi\tau)^{-1/2}\sum_{k>0}[a(k)+2kb(k)\xi]e^{-k^2\xi}$. The sum over $k\in\Z$ in \eqref{eq:lattice} is an even extension: $a$ is even and $2kb(k)$ is even, so the symmetric sum is twice the sum over $k>0$ plus the half-weighted zero mode. It is not an extension of the geometric exponential series.
\end{proof}

For example, for $\gen E_0=e$ itself (the control $00000$, where $q_\v=0$),
\[
 a(k)=-\frac{3k^6-25k^4-4k^2-64}{64(k^2-1)^2},
 \qquad
 b(k)=\frac{k(k^2-4)(k^2-16)}{64(k^2-1)},
\]
and $a(2m)=A_m$, $b(2m)=B_m$ for $m\geq1$ while $a(0)=1$, in agreement with \eqref{eq:AB} and the half weight at $k=0$.

\subsection{Small scales: a dual representation}\label{app:small}

Set, for $a>0$,
\begin{equation}\label{eq:Qa}
\begin{gathered}
 Q_a(\xi)=\frac{\sqrt\pi}4\int_0^\xi e^{-a^2(\xi-s)}s^{-1/2}\,ds
 =\frac{\sqrt\pi}{2a}\Dawson(a\sqrt\xi),\\
 \frac{\sqrt\pi}2\sqrt\xi\,e^{-a^2\xi}\leq Q_a(\xi)\leq\frac{\sqrt\pi}2\sqrt\xi .
\end{gathered}
\end{equation}
Poisson summation gives, for $j\in\{0,1\}$,
\begin{equation}\label{eq:poisson}
 F_j(\xi):=\tfrac12\sum_{k\equiv j\,(2)}e^{-k^2\xi}
 =\frac{\sqrt\pi}{4\sqrt\xi}\Bigl(1+2\sum_{\ell\geq1}(-1)^{j\ell}e^{-\pi^2\ell^2/(4\xi)}\Bigr).
\end{equation}
Write $F_j=F^{(0)}+F^{\mathrm{im}}$ with $F^{(0)}=\tfrac{\sqrt\pi}4\xi^{-1/2}$ the zero image. Substituting the partial-fraction decomposition of $a$ and $2kb$ into \eqref{eq:lattice} expresses $\Gamma$ through $F_j$ and its derivatives and through
\begin{equation}\label{eq:SaTa}
 S_a=\tfrac12\sum_{k\equiv j\,(2)}\frac{e^{-k^2\xi}}{k^2-a^2},
 \qquad
 T_a=\tfrac12\sum_{k\equiv j\,(2)}\frac{e^{-k^2\xi}}{(k^2-a^2)^2} .
\end{equation}
These satisfy $S_a'+a^2S_a=-F_j$ and $T_a'+a^2T_a=-S_a$, and the cotangent partial fraction expansion gives the initial values
\begin{equation}\label{eq:SaTazero}
 S_a(0)=0,\qquad T_a(0)=\frac{\pi^2}{16a^2}
 \qquad(a>0\text{ of parity opposite to }j),
\end{equation}
together with $S_0(0)=\pi^2/8$ when $j$ is odd. Replacing $F_j$ by $F^{(0)}$ and keeping \eqref{eq:SaTazero} produces the principal expression $M$: $S_a^{(0)}=-Q_a$, and
\begin{equation}\label{eq:Tzero}
 T_a^{(0)}+\xi S_a^{(0)}
 =\frac{\pi^2}{16a^2}e^{-a^2\xi}+\frac{Q_a}{2a^2}-\frac{\sqrt\pi\sqrt\xi}{4a^2} .
\end{equation}
At every nonzero pole the coefficient of $\xi S_a$ equals that of $T_a$, so \eqref{eq:Tzero} applies; all pure $\sqrt\xi$ terms then cancel against the polynomial part and the $a=0$ term, and all $\xi^{-1/2-h}$ terms with $h\geq0$ cancel as well. What survives is
\begin{equation}\label{eq:Mdef}
 M(\xi)=\pi^2\sum_a d_a e^{-a^2\xi}+\sum_a q_a Q_a(\xi),
 \qquad
 d_a=\frac{\beta_a}{16a^2},\quad q_a=-\alpha_a+\frac{\beta_a}{2a^2},
\end{equation}
where $\alpha_a,\beta_a$ are the simple and double residues of $a(k)$ at $k^2=a^2$. For $\gen E_0$, for instance, $M=45\pi^2e^{-\xi}/512$; the ten expressions are tabulated in the supplement's note \nolinkurl{region-three-verification.tex} and recomputed by \nolinkurl{certify_corners.py}.

\begin{lemma}\label{lem:small}
For $0<\xi\leq1/100$ every generator satisfies
\[
 |\Gamma(\xi)-M(\xi)|\leq10^6\xi^{-13/2}e^{-9/(4\xi)}<10^{-40}\sqrt\xi .
\]
Moreover $M>\sqrt\xi/5$ for $\gen C_1,\gen D_1,\gen A_2,\gen B_2,\gen P_3$, and $M>1/50$ for $\gen E_0,\gen V_0,\gen T_1,\gen T_2,\gen T_3$.
\end{lemma}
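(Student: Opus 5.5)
The plan is to bound $\Gamma-M$ term by term in the partial-fraction decomposition, separating the zero image from the nonzero images of the Poisson formula \eqref{eq:poisson}. Substituting the partial fractions of $a$ and $2kb$ into \eqref{eq:lattice} writes $\Gamma$ as a finite linear combination, with rational coefficients read off from the supplement data, of $\xi^hF_j^{(i)}$ (polynomial parts, $h\le1$, $i\le3$), $S_a$, $\xi S_a$ and $T_a$ over the finitely many poles $a\le4$. By construction $M$ is the same combination with $F_j$ replaced by $F^{(0)}$ and the ODE-driven $S_a,T_a$ replaced by $S_a^{(0)},T_a^{(0)}$ with the same initial values \eqref{eq:SaTazero}. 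So $\Gamma-M$ is the same combination applied to $F^{\mathrm{im}}$ and to the differences $S_a-S_a^{(0)}$, $T_a-T_a^{(0)}$.

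For the image part, $|F^{\mathrm{im}}|\le\tfrac{\sqrt\pi}{2}\xi^{-1/2}\sum_{\ell\ge1}e^{-\pi^2\ell^2/(4\xi)}$. Derivatives up to order three produce at most $\xi^{-1/2-6}$ times polynomially growing factors in $\ell$. Since $\pi^2/4>9/4$, a crude bound gives $C\xi^{-13/2}e^{-9/(4\xi)}$ with room to spare for $\xi\le1/100$. I chose the exponent $9/4$ rather than $\pi^2/4$ so that the slack absorbs the $\ell$-sum and the factor from $e^{(\pi^2-9)/(4\xi)}$.

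For the resolvent parts, I would use Duhamel. The difference $\delta S_a=S_a-S_a^{(0)}$ solves $\delta S_a'+a^2\delta S_a=-F^{\mathrm{im}}$ with $\delta S_a(0)=0$, which is legitimate because $F^{\mathrm{im}}$ vanishes to infinite order at $0$. Hence $|\delta S_a(\xi)|\le\int_0^\xi|F^{\mathrm{im}}|$, and similarly $|\delta T_a|\le\int_0^\xi|\delta S_a|$ up to factors $e^{a^2\xi}\le e^{16/100}$. Integrating $s^{-1/2}e^{-\pi^2/(4s)}$ only improves the power. The $a=0$ term for odd $j$ is handled the same way, with $S_0(0)=\pi^2/8$ matching. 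Summing the explicit rational coefficients, which are bounded by a modest constant, gives the $10^6$. Then $10^6\xi^{-7}e^{-9/(4\xi)}<10^{-40}$ on $(0,1/100]$, because the left side is increasing there and at $\xi=1/100$ it is about $10^{20}e^{-225}\approx10^{-78}$.

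For the lower bounds on $M$, I would use \eqref{eq:Mdef} and the bracket \eqref{eq:Qa} for $Q_a$. For $\gen E_0,\gen V_0,\gen T_1,\gen T_2,\gen T_3$, the term $\pi^2\sum d_a$ at $\xi=0$ is a positive constant; for example, for $\gen E_0$ it is $45\pi^2/512\approx0.87$. The variation on $[0,1/100]$ is at most $\pi^2\sum|d_a|a^2\xi+\sum|q_a|\tfrac{\sqrt\pi}{2}\sqrt\xi$, which is small, so $M>1/50$. For $\gen C_1,\gen D_1,\gen A_2,\gen B_2,\gen P_3$, I expect the cancellation to give $\sum d_a=0$. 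Then $M=\pi^2\sum d_a(e^{-a^2\xi}-1)+\sum q_aQ_a$. Using $Q_a=\tfrac{\sqrt\pi}{2}\sqrt\xi(1+O(a^2\xi))$, this is $\tfrac{\sqrt\pi}{2}\big(\sum q_a\big)\sqrt\xi+O(\xi)$, and one checks from the tabulated data that $\tfrac{\sqrt\pi}{2}\sum q_a$ exceeds $1/5$ by a margin that dominates the $O(\xi)\le O(\sqrt\xi/10)$ error.

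I expect the main obstacle to be bookkeeping rather than ideas: verifying the claimed cancellations and obtaining explicit coefficient bounds for all ten generators. I would delegate that to exact rational arithmetic on the stored residues, as \nolinkurl{certify_corners.py} does.
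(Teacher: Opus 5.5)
Your proposal is correct and follows the paper's proof essentially step by step. You split $\Gamma-M$ into the nonzero Poisson images acting through the polynomial parts, bounded via $\partial_\xi^h[\xi^{-1/2}e^{-b/\xi}]=\xi^{-1/2-h}e^{-b/\xi}P_h(b/\xi)$ with $9<\pi^2<10$, plus the $S_a,T_a$ remainders controlled by Duhamel with integrating factors at most one. You reduce the second inequality to the endpoint by monotonicity of $\xi^{-7}e^{-9/(4\xi)}$, and you get positivity of $M$ from elementary two-sided bounds on $e^{-a^2\xi}$ and $Q_a$, separately in the cases $\sum_a d_a=0$ and $\sum_a d_a>0$. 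The only real difference is that the paper makes every constant explicit: the sums $\sigma_h$, the image bound $1000$, coefficient norms below $30$, and the bounds $\tfrac34 r(1-a^2/100)\le Q_a\le r$ that produce the tabulated rational margins. Your $O(\cdot)$ and ``small'' estimates would have to be replaced by such explicit rational computations from the stored residues.
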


\begin{proof}[Sketch; the details are in \cite{CD26Supp}]
For the remainder, $\partial_\xi^h[\xi^{-1/2}e^{-b/\xi}]=\xi^{-1/2-h}e^{-b/\xi}P_h(b/\xi)$ with $P_0=1$ and $P_{h+1}=(X-h-\tfrac12)P_h-XP_h'$; the sums $\sigma_h$ of absolute coefficients for $h\leq3$ are $1,\tfrac32,\tfrac{19}4,\tfrac{173}8$. Using $9<\pi^2<10$ in both $|P_h(X)|\leq\sigma_hX^h\leq\sigma_h(\tfrac52)^h\ell^{2h}\xi^{-h}$ and $e^{-\pi^2\ell^2/(4\xi)}\leq e^{-9\ell^2/(4\xi)}$, together with $\sum_{\ell\geq1}\ell^{2h}e^{-9\ell^2/(4\xi)}\leq2e^{-9/(4\xi)}$, gives $|\partial_\xi^hF^{\mathrm{im}}|\leq1000\,\xi^{-13/2}e^{-9/(4\xi)}$ for all $h\leq3$ and $\xi\leq1$. The polynomial part contributes at most $30$ times this, the coefficient norm being below $30$ for every generator, and the $S_a,T_a$ remainders are smaller still because their integrating factors are at most one. The second inequality reduces to the endpoint, since $\xi^{-7}e^{-9/(4\xi)}$ increases on $(0,1/100]$, and there it reads $10^6\,100^7\,10^{40}=10^{60}<2^{225}<e^{225}$.

For positivity, put $r=\sqrt\xi\leq1/10$ and use $1-a^2\xi\leq e^{-a^2\xi}\leq1$, $9<\pi^2<10$ and $\tfrac34r(1-a^2/100)\leq Q_a\leq r$ from \eqref{eq:Qa}. When the coefficients $d_a$ sum to zero the leading behavior is $\propto r$ and the resulting rational lower bounds are $\tfrac{144}{175},\tfrac{60603}{179200},\tfrac{2053267}{2867200},\tfrac{170643}{716800},\tfrac{384009}{716800}$, each exceeding $1/5$; otherwise it is constant, with lower bounds $\tfrac{8019}{10240},\tfrac{2923}{14336},\tfrac{42643}{286720},\tfrac{203079}{2293760},\tfrac{15403}{573440}$, each exceeding $1/50$. All of these are exact rational computations performed by \nolinkurl{certify_corners.py} from the stored partial fractions.
\end{proof}

\subsection{The compact range and the large-scale tail}\label{app:compact}

The remaining two ranges use only rational arithmetic and an outward-rounded interval exponential. We first bound the coefficients. Factoring each denominator and bounding its roots by $4$, and using the absolute coefficient norm of the numerator after division by its leading power of $k$, gives
\begin{equation}\label{eq:coeffbound}
 |a(k)|\leq100k^8,\qquad |2kb(k)|\leq100k^8\qquad(k\geq10).
\end{equation}
No estimated constants enter: for $k\geq10$ one has $|k-\rho|\geq k(1-|\rho|/10)$ for every root $\rho$, and $|n(k)|\leq k^{\deg n}\sum_i|n_i|10^{i-\deg n}$.

Next we bound the tail. Truncating the lattice at $N=128$, and using that $t\mapsto t^8e^{-t^2/100}$ decreases for $t\geq20$,
\begin{equation}\label{eq:tail}
 \Bigl|\tfrac12\sum_{|k|>N}\bigl[a(k)+2kb(k)\xi\bigr]e^{-k^2\xi}\Bigr|
 \leq T_N:=200\int_N^\infty t^8e^{-t^2/100}\,dt<4.0271\times10^{-53}
\end{equation}
uniformly for $1/100\leq\xi\leq1$, the integral being bounded by the exact recursion $I_0(N)\leq e^{-\alpha N^2}/(2\alpha N)$, $I_p(N)=N^{p-1}e^{-\alpha N^2}/(2\alpha)+\tfrac{p-1}{2\alpha}I_{p-2}(N)$ with $\alpha=1/100$.

On the compact range we use a Taylor bound on rational cells. On a cell $[l,h]$ with midpoint $z$ and radius $d$, write $f_N$ for the truncated sum. Taylor's theorem gives the rigorous lower bound
\begin{equation}\label{eq:cell}
 \Gamma(\xi)\geq f_N(z)-d\,|f_N'(z)|-\tfrac12d^2B(l,h)-T_N,
 \qquad
 B(l,h)=\sum_{0\leq k\leq N}\bigl[k^4(|a|+|c_k|h)+2k^2|c_k|\bigr]e^{-k^2l},
\end{equation}
with $c_k=2kb(k)$ and the half weight at $k=0$; $B$ bounds $|f_N''|$ on the whole cell. Starting from $[1/100,1]$ the verifier bisects until the interval evaluation of the right-hand side is strictly positive, and refuses cells of width below $10^{-10}$. Every exponential in \eqref{eq:cell} is enclosed by a rational interval obtained from the Taylor series of $e^{-x}$ with outward rounding, so the certificate for a cell is a finite list of rational inequalities; these are the cells that the Lean development of the supplement verifies.

On the large range we use first-mode domination. Let $k_0$ be the first nonzero mode and $\alpha_0$ its coefficient, including the half weight at $k=0$. In every case $\alpha_0>0$ and $c_{k_0}=0$, so for $\xi\geq1$
\begin{equation}\label{eq:largescale}
 e^{k_0^2\xi}\Gamma(\xi)\geq\alpha_0-\sum_{k>k_0}\bigl(|a(k)|+|c_k|\bigr)e^{-(k^2-k_0^2)}>0,
\end{equation}
since both $e^{-\delta\xi}$ and $\xi e^{-\delta\xi}$ decrease for $\xi\geq1$ when $\delta=k^2-k_0^2\geq1$; the tail beyond $128$ is bounded by $200e^{k_0^2}\int_{128}^\infty t^8e^{-t^2}\,dt$.

\begin{table}[t]\centering
\begin{tabular}{lrrll}\toprule
Generator & Cells & $k_0$ & $\alpha_0$ & Large-scale margin\\\midrule
$\gen E_0$ & 16 & 0 & $1/2$ & $0.4908$\\
$\gen V_0$ & 139 & 4 & $4/3$ & $1.3333$\\
$\gen C_1$ & 96 & 1 & $2/3$ & $0.6663$\\
$\gen D_1$ & 135 & 3 & $4/15$ & $0.2666$\\
$\gen T_1$ & 101 & 5 & $16/21$ & $0.7619$\\
$\gen A_2$ & 26 & 2 & $4/15$ & $0.2666$\\
$\gen B_2$ & 84 & 4 & $16/105$ & $0.1523$\\
$\gen T_2$ & 80 & 6 & $32/63$ & $0.5079$\\
$\gen P_3$ & 33 & 3 & $16/105$ & $0.1523$\\
$\gen T_3$ & 97 & 7 & $256/693$ & $0.3694$\\\bottomrule
\end{tabular}
\caption{The Region III certificate: $807$ compact cells in total. Margins are rounded down for display; the certified intervals and margins are archived in \nolinkurl{corner_intervals.json}.}\label{tab:cells}
\end{table}

\subsection{Conclusion in Region III}\label{app:region3done}

\begin{proof}[Proof of Theorem~\ref{thm:region3ham}]
The $64$ corner gaps decompose, by exact constant-coefficient identities checked in \nolinkurl{audit_corner_algebra.py}, into nonnegative combinations of the ten generators of Table~\ref{tab:cells}, of the identically vanishing gaps, and of two quantities with direct positive representations: the gap $\eta$ of the control $00111$ at $j=0$, and $\sinh L\,\Lam(L)$, for which
\begin{equation}\label{eq:etalog}
 \lambda^{-1/2}\eta(L\mu)=3\int_0^L\left(\frac{\sinh(s\mu)}{\sinh(L\mu)}\right)^{\!6}ds,
 \qquad
 \lambda^{-1/2}\sinh(L\mu)\,\Lam(L\mu)=\int_L^\infty\frac{\sinh(L\mu)}{\sinh(s\mu)}\,ds,
\end{equation}
both manifestly integrals of products of Brownian exit-time transforms, hence in $\CM$ with strictly positive inverse for $L>0$. At the corner $\omega_0$ the decomposition can be written down by hand. There every exponential of the sign sum equals one, the only nonzero second derivatives of $F$ in \eqref{eq:Gv} are $\partial_{a_i}^2F=e$, $\partial_{a_i}\partial_{a_l}F=b_2$ for $i\neq l\leq3$, $\partial_{a_i}\partial_{a_4}F=b_1'$ and $\partial_{a_4}^2F=e''$, and \eqref{eq:bcoefficients} together with $\eta=e-e''$ gives
\[
 G_\v(\omega_0)=c_e\,e+c_\eta\,\eta+c_S\sinh L\,\Lam(L),
\]
where, with $q=q_\v$, $s_1=q_1+q_2+q_3$ and $e_2=q_1q_2+q_1q_3+q_2q_3$,
\[
 c_e=1-q_1^2-q_2^2-q_3^2-q_4^2-\tfrac13q_4s_1-\tfrac13e_2,\qquad
 c_\eta=q_4^2+\tfrac13q_4s_1+\tfrac1{21}e_2,\qquad
 c_S=-\tfrac67e_2 .
\]
For the controls $00001$, $00010$ and $00100$ the coefficients are $(\tfrac13,\tfrac2{21},\tfrac27)$, for $00011$, $00101$ and $00110$ they are $(0,\tfrac37,\tfrac27)$, and for $00111$ they are $(0,1,0)$; all of them are nonnegative. Lemma~\ref{lem:small}, \eqref{eq:cell} and \eqref{eq:largescale} prove that each generator has a strictly positive inverse transform for every $\xi>0$, hence for every $L,\tau>0$. Lemma~\ref{lem:exit} then propagates positivity from the corners to the whole cube, and $\CM$ is preserved. Thus the transformed gap of every control lies in $\CM$ at every point of the closed region with $L>0$, and Lemma~\ref{lem:cmsign}, applied to $2U_\tau-D^2_\v U$, which is continuous in $\tau$ with the growth that Lemma~\ref{lem:convergence} provides, gives the inequality.

For strictness, the COMB representative is $01010$; its gap vanishes at $\omega_0$ and equals the strictly positive generators $\gen D_1,\gen B_2,\gen P_3$ at the other three ordered corners $\omega_1,\omega_2,\omega_3$. Suppose that some $a_i>0$, and let $\omega_j$, $j\geq1$, be the ordered corner whose zero entries are exactly those $i$ with $a_i=0$. Every summand of \eqref{eq:interpolation} lies in $\CM$, so the transformed COMB gap minus the summand of $\omega_j$ lies in $\CM$. That summand is $\widehat\kappa\,\widehat g$, where $\widehat\kappa=\prod_{a_i>0}W_1(\lambda;a_i,L)$, the factors $W_0(\lambda;0,L)=1$ dropping out, and $\widehat g$ is the transform of the inverse $\Gamma(\xi)/\sqrt{\pi\tau}$ of the generator at $\omega_j$, which is continuous and strictly positive in $\tau>0$. Since $\sinh(a\mu)/\sinh(L\mu)\geq\tfrac12e^{-(L-a)\mu}$ as soon as $e^{-2a\mu}\leq\tfrac12$, we have $\widehat\kappa(\lambda)\geq e^{-4L\sqrt\lambda}$ for all large $\lambda$, and the second statement of Lemma~\ref{lem:cmsign}, applied to the COMB gap, gives strict positivity. When $a_1=a_2=a_3=0$ the corner identity gives equality.
\end{proof}

\section{The Regions I and II certificate}\label{app:region12}

In Regions I and II we use the coordinates
\begin{equation}\label{eq:LbcZ}
 L=z_1,\qquad b=z_2,\qquad c=|z_3|,\qquad Z=z_4,
 \qquad 0\leq c\leq b\leq L,\quad Z\geq0,
\end{equation}
and the normalized gap $\gamma_\v=(D^2_{\v_*}U-D^2_\v U)/k$, whose transform is $\widehat\gamma_\v=\lambda^{-1/2}G_\v(\mu x)$ as in \eqref{eq:Fv}. The weights $W_0,W_1$ of \eqref{eq:W} are now taken on $(0,L)$, that is with $r=L$.

\subsection{Reduction to four corner families}\label{app:reduction}

The stationary gaps again satisfy $\partial_{bb}g=\lambda g$ and $\partial_{cc}g=\lambda g$, so two-point interpolation on the square $[0,L]^2$ is exact. Three of its four corners are physical states; the fourth is not, and the point of the following identity is that its contribution is nonetheless positive once its interpolation weights are included.

Let $P$ interchange the first two entries of a control in Region I (the third and fourth in Region II) and put $h_\v=1-(\v_1-\v_2)^2\in\{0,1\}$. The stationary nonphysical-corner identity is, in Region I,
\begin{equation}\label{eq:nonphysical}
 G_\v(L,0,L,Z)=G_{P\v}(L,L,0,Z)+h_\v\sinh L ,
\end{equation}
and in Region II the same identity without the term $h_\v\sinh L$. It is a symmetry of the stationary formula \eqref{eq:F12}. In the coordinates \eqref{eq:LbcZ}, \eqref{eq:F12} is symmetric under $b\leftrightarrow c$ except for the term $-\tfrac12\sinh(z_2+z_3)$ of $F_4$, which equals $-\tfrac12\sinh(b-c)$ in Region I, where $z_3=-c$, and is symmetric in Region II, where $z_3=c$. The permutation $P$ exchanges $b$ and $c$ and fixes $L$ and $Z$, so the gap of $\v$ at $(L,0,L,Z)$ is the gap of $P\v$ at $(L,L,0,Z)$, plus, in Region I, the gap of $\v$ for the function $\sinh(c-b)$, which is $(1-(\v_1-\v_2)^2)\sinh L$ because $\partial_\v(b-c)=k(\v_1-\v_2)$. The extra term must not be inverted on its own. Including its weights,
\begin{equation}\label{eq:green}
 \frac{\sinh(L\mu)}{\mu}\,W_0(b)W_1(c)
 =\frac{\sinh(c\mu)\sinh((L-b)\mu)}{\mu\sinh(L\mu)}=:R^L_\lambda(b,c),
 \qquad c\leq b,
\end{equation}
which is exactly the Dirichlet resolvent kernel of $\lambda-\partial_{bb}$ on $(0,L)$. It lies in $\CM$: since $\coth x+\coth y=\sinh(x+y)/(\sinh x\sinh y)$,
\begin{equation}\label{eq:greenCM}
 R^L_\lambda(b,c)=\frac{\sinh(c\mu)}{\sinh(b\mu)}\cdot\frac1{b_\lambda(b)+b_\lambda(L-b)}
 =W_1(\lambda;c,b)\cdot2\int_0^\infty e^{-2sb_\lambda(b)}e^{-2sb_\lambda(L-b)}\,ds,
\end{equation}
with $b_\lambda(r)=\mu\coth(\mu r)$ as in \eqref{eq:ops} below. The weight $W_1$ is in $\CM$, and so is $e^{-2sb_\lambda(r)}$ for every $s,r>0$, because $b_\lambda(r)$ is a Bernstein function (Lemma~\ref{lem:positiveflow}).

Name the four physical corner families by their scaled gaps:
\begin{equation}\label{eq:families}
 \begin{array}{llll}
  \text{family }0: & (y_1,y_2,y_3,y_4)=(0,L,0,Z), & m=1,\\
  \text{family }1: & (L,0,L,Z), & m=2,\\
  \text{family }\mathrm I: & (0,0,2L,Z), & m=3,\\
  \text{family }\mathrm{II}: & (2L,0,0,Z+L), & m=3,
 \end{array}
\end{equation}
where $m$ is the trace power that will govern the corresponding evolution. Three of the four families meet Region III at $Z=0$: the states of families $0$, $1$ and $\mathrm{II}$ at $Z=0$, with scaled gaps $(0,L,0,0)$, $(L,0,L,0)$ and $(2L,0,0,L)$, are the Region III corners $\omega_0,\omega_1,\omega_2$ of \eqref{eq:corner}, where $a=L(0,0,0,1)$, $L(0,0,1,1)$ and $L(0,1,1,1)$. Since $u$ is $C^2$~\cite{CD26}, every gap of these three families at $Z=0$ is the corresponding Region III corner gap, with no further computation. Family $\mathrm I$ meets Region III nowhere.

\begin{proposition}\label{prop:reduction}
In Region I, for every control $\v$,
\[
\begin{aligned}
 \widehat\gamma_\v(L,b,c,Z)={}&W_0(b)W_0(c)\,\widehat\gamma_{0,\v}(L,Z)
  +W_1(b)W_0(c)\,\widehat\gamma_{1,\v}(L,Z)\\
 &+W_0(b)W_1(c)\,\widehat\gamma_{1,P\v}(L,Z)
  +W_1(b)W_1(c)\,\widehat\gamma_{\mathrm I,\v}(L,Z)
  +h_\v R^L_\lambda(b,c).
\end{aligned}
\]
In Region II the same holds with family $\mathrm I$ replaced by family $\mathrm{II}$, with $P$ the transposition of entries $3$ and $4$, and with no Green term. Hence nonnegativity of the inverse transforms of the four physical corner families implies \eqref{eq:ham} at every state of Regions I and II with $L>0$.
\end{proposition}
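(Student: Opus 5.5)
The plan is to apply the two-point interpolation of Lemma~\ref{lem:exit} on the square $[0,L]^2$ in $(b,c)$, then rewrite the one nonphysical corner using the identity \eqref{eq:nonphysical}.

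First I check the hypotheses. In Region I (and separately in Region II) the regional coordinates are linear in $x$. The scaled transformed gap is $\widehat\gamma_\v(\lambda,x)=\lambda^{-1/2}G_\v(\mu x)$, where $G_\v=F-q^TD^2F\,q$ in the coordinates $(z_1,z_2,z_3,z_4)$. I need $\partial_{bb}\widehat\gamma_\v=\lambda\widehat\gamma_\v$ and $\partial_{cc}\widehat\gamma_\v=\lambda\widehat\gamma_\v$ on $0\le c\le b\le L$, and it suffices that $\partial_{z_2}^2F=F$ and $\partial_{z_3}^2F=F$. For $F_4$ this is visible from \eqref{eq:F4}: the $z_2,z_3$ dependence is through $\cosh,\sinh$ and through $\sinh(z_2+z_3)$. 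For $\Delta F$ in \eqref{eq:F12}, on $c\le b\le L\le t$ the factors $\Phi_t(z_2)$ and $\Phi_t(c)$ are $\sinh(t-\cdot)/\sinh t$, so they satisfy the same equation. Since the derivatives commute with the constant-coefficient contraction defining $G_\v$, the ODE passes to $G_\v$, and then to $\widehat\gamma_\v$ after the scaling by $\mu$.

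The one-dimensional argument of Lemma~\ref{lem:exit} works on any rectangle where the ODE holds, and $F$ has a smooth extension to the full square $[0,L]^2$ given by the same formula (the cut-off of $\Phi_t$ is inactive since $t\ge L$). Applying it gives four corner terms with weights $W_{\epsilon_b}(b)W_{\epsilon_c}(c)$. Reading off the corners:
\begin{itemize}
\item $(b,c)=(0,0)$ has scaled gaps $(0,L,0,Z)$, which is family $0$.
\item $(L,0)$ is family $1$, since $y_1=z_2+z_3$ and $y_3=z_2-z_3$ when $z_3=-c$.
\item $(L,L)$ gives $y_1=0$, $y_3=2L$, which is family $\mathrm I$.
\item $(0,L)$ is the nonphysical corner, with $z_2<|z_3|$.
\end{itemize}
Here I need to confirm in the coordinates \eqref{eq:LbcZ} that each corner lands on the scaled gaps listed in \eqref{eq:families}, including $z_4=y_4$ in Region I.

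Next I handle the nonphysical corner. By \eqref{eq:nonphysical}, evaluated at $\mu x$ and scaled by $\lambda^{-1/2}$, its value is $\widehat\gamma_{1,P\v}(L,Z)+h_\v\lambda^{-1/2}\sinh(L\mu)$. Multiplying by the weight $W_0(b)W_1(c)$ and using \eqref{eq:green}, the extra piece becomes $h_\v R^L_\lambda(b,c)$. This produces the stated formula.

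For Region II, $z_3=c$, so the $F_4$ term $-\tfrac12\sinh(z_2+z_3)$ is symmetric in $b$ and $c$ and no Green term appears. The corner $(L,L)$ has $z_4=y_4-c$; matching it with $(2L,0,0,Z+L)$ requires the convention that $Z=z_4$ is held fixed. I need to check this bookkeeping carefully, and I expect it to be the main obstacle, together with confirming that \eqref{eq:nonphysical} holds with the same $Z$ in both regions.

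Finally, the conclusion. The weights $W_0$ and $W_1$ lie in $\CM$, and $R^L_\lambda$ lies in $\CM$ by \eqref{eq:greenCM}; I take as given that $b_\lambda(r)=\mu\coth(\mu r)$ is a Bernstein function (Lemma~\ref{lem:positiveflow}). Also $h_\v\ge0$. So if the four family gaps lie in $\CM$ (nonnegative inverse transforms), closure of $\CM$ under products and sums gives $\widehat\gamma_\v\in\CM$, which by \eqref{eq:Fv} is the transform of $(2U_\tau-D^2_\v U)/k$. Lemma~\ref{lem:cmsign} then yields \eqref{eq:ham}. It uses the growth bound of Lemma~\ref{lem:convergence}(b) and continuity from Theorem~\ref{thm:regularity}.
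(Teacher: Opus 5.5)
Your proposal is correct and follows the paper's proof. Like the paper, you use exact two-point interpolation in $(b,c)$ on $[0,L]^2$ as in Lemma~\ref{lem:exit}, substitute \eqref{eq:nonphysical} at the nonphysical corner, absorb the extra $h_\v\sinh L$ term into the Dirichlet kernel via \eqref{eq:green} and \eqref{eq:greenCM}, and conclude by closure of $\CM$ and Lemma~\ref{lem:cmsign}. The one point the paper states and you leave implicit is that families $0$ and $1$ lie on the interface $z_3=0$: the Region I and Region II formulas give them the same corner gaps because the solution is $C^2$ across that interface (Theorem~\ref{thm:regularity}), so one certificate for each serves both regions.
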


\begin{proof}
Two-point interpolation in $b$ and $c$ as in Lemma~\ref{lem:exit}, followed by substitution of \eqref{eq:nonphysical} at the one nonphysical corner and of \eqref{eq:green} for the extra term. Every weight is in $\CM$, and so is the Green term, by \eqref{eq:greenCM}. The two families shared between the regions agree, because $U$ is $C^2$ across the interface by Theorem~\ref{thm:regularity}. The identities \eqref{eq:nonphysical} and \eqref{eq:green}, the endpoint values and the derivative jump of the Green kernel, and the fact that $h_\v\in\{0,1\}$ for all $32$ controls, are also verified symbolically in \nolinkurl{check_regions12_reduction.py}.
\end{proof}

\subsection{Positivity-preserving evolutions and the pure traces}\label{app:operators}

Everything now happens in the two variables $(L,Z)$. Write
\begin{equation}\label{eq:ops}
\begin{gathered}
 b_\lambda(L)=\mu\coth(\mu L),\qquad
 \Rop_mf(L)=\mu^2\sinh^m(\mu L)\int_L^\infty\frac{f(r)}{\sinh^{m+2}(\mu r)}\,dr,\\
 \Top_{\alpha,m}=-b_\lambda+\alpha\Rop_m,
\end{gathered}
\end{equation}
and $\Kop_m=\Top_{m+1,m}$, $\Aop_m=\Top_{m,m}$.

\begin{lemma}\label{lem:positiveflow}
Let $\alpha\geq0$, let $m\geq0$ be an integer, and let $\epsilon>0$ and $Z_0>0$. Suppose that, for each $\lambda>0$, the data $f(\cdot,0)$ are bounded and continuous on $[\epsilon,\infty)$, the forcing $q$ is bounded on $[\epsilon,\infty)\times[0,Z_0]$ and continuous in each variable, and $f$ is a bounded mild solution of $\partial_Zf=2\Top_{\alpha,m}f+q$ there: $f$ is bounded on $[\epsilon,\infty)\times[0,Z_0]$, continuous in each variable, and satisfies
\[
 f(L,Z)=e^{-2Zb_\lambda(L)}f(L,0)
 +\int_0^Ze^{-2(Z-s)b_\lambda(L)}\bigl(2\alpha\,\Rop_mf(L,s)+q(L,s)\bigr)\,ds
\]
for $L\geq\epsilon$ and $0\leq Z\leq Z_0$. If $f(L,0)$ and $q(L,s)$, as functions of $\lambda$, lie in $\CM$ for all $L\geq\epsilon$ and $s\in[0,Z_0]$, then so do $f(L,Z)$ and $f(L,Z)-e^{-2Zb_\lambda(L)}f(L,0)$, for all $L\geq\epsilon$ and $Z\in[0,Z_0]$.
\end{lemma}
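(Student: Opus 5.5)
The plan is to solve the mild equation by Picard iteration in $Z$ and to check that every iterate stays in $\CM$; the iteration converges to the given bounded mild solution by a Volterra uniqueness argument, and $\CM$ is closed under pointwise limits. Two positivity facts drive this. First, $e^{-2Zb_\lambda(L)}\in\CM$ for every $Z\geq0$ and $L>0$. The reason is that $b_\lambda(L)=\mu\coth(\mu L)$ is a Bernstein function of $\lambda$. Indeed, the partial fraction expansion gives
\[
 \mu\coth(\mu L)=\frac1L+\sum_{n\geq1}\frac{2\lambda L}{\pi^2n^2+\lambda L^2},
\]
and each summand $\lambda/(c+\lambda)$ is Bernstein, as are nonnegative sums and limits of Bernstein functions. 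Bochner's theorem then puts $e^{-tb_\lambda}$ in $\CM$ for every $t\geq0$. Second, $\Rop_m$ maps $\CM$ to $\CM$. For $r\geq L$,
\[
 \mu^2\frac{\sinh^m(\mu L)}{\sinh^{m+2}(\mu r)}=\Bigl(\frac{\sinh(\mu L)}{\sinh(\mu r)}\Bigr)^{m}\cdot\frac{\mu^2}{\sinh^2(\mu r)}.
\]
Here $\sinh(\mu L)/\sinh(\mu r)=W_1(\lambda;L,r)\in\CM$ by Appendix~\ref{app:interpolation}. The remaining factor $\mu^2/\sinh^2(\mu r)$ is $r^{-2}$ times the square of $\mu r/\sinh(\mu r)=\prod_n(1+r^2\lambda/(\pi^2n^2))^{-1}$, which is in $\CM$ as a limit of products of $\CM$ factors. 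So the integrand of $\Rop_mf$ is a product of $\CM$ functions for each $r$. The integral over $r\in[L,\infty)$ is then in $\CM$ provided it converges, which holds because $f$ is bounded and the kernel decays like $e^{-2\mu(r-L)}$.

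Next I set up the iteration. Let $f^{(0)}(L,Z)=e^{-2Zb_\lambda(L)}f(L,0)$. For $n\geq0$, define $f^{(n+1)}$ by the mild formula with $\Rop_mf$ replaced by $\Rop_mf^{(n)}$. By induction each $f^{(n)}(L,Z)\in\CM$: the time integral in $s$ is a nonnegative integral of products of $\CM$ functions, since $\alpha\geq0$ and $q\in\CM$. The same induction shows that $f^{(n)}-f^{(0)}$, which is the integral term alone, is in $\CM$.

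The main obstacle is convergence, together with the identification of the limit with the given $f$ at each fixed $\lambda$. For this I fix $\lambda$ and use the bounds $e^{-2(Z-s)b_\lambda}\leq1$ and
\[
 \|\Rop_mg\|_{\infty}\leq\mu^2\sup_{L\geq\epsilon}\int_L^\infty\frac{dr}{\sinh^2(\mu r)}=\mu(\coth(\mu\epsilon)-1)=:K_\lambda
\]
on $[\epsilon,\infty)$. The difference $D=f-f^{(n)}$ satisfies $|D_{n+1}(Z)|\leq2\alpha K_\lambda\int_0^Z|D_n(s)|\,ds$ in sup norm over $L$. Iterating gives $\sup|D_n|\leq C(2\alpha K_\lambda Z_0)^n/n!\to0$. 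This step uses the boundedness of $f$, and measurability in $s$ follows from continuity in each variable. Hence $f(L,Z)=\lim_nf^{(n)}(L,Z)$ pointwise in $\lambda$, and likewise $f-f^{(0)}=\lim(f^{(n)}-f^{(0)})$. Closure of $\CM$ under pointwise limits, by the continuity theorem for Laplace transforms, finishes the proof.
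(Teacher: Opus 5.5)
Your proposal is correct and is essentially the paper's proof: Picard iteration of the Duhamel map, membership of every iterate in $\CM$ via the Bernstein property of $b_\lambda$ and the factorization of the $\Rop_m$ kernel into exit weights times $(\mu/\sinh(\mu r))^2$, the factorial estimate with $K_\lambda=\mu(\coth(\mu\epsilon)-1)$, and closure of $\CM$ under pointwise limits. The differences are cosmetic: you start from $e^{-2Zb_\lambda}f(\cdot,0)$ rather than $0$. Two details need tightening. State, as the paper does, that the Duhamel map preserves boundedness and continuity in each variable, since the $r$- and $s$-integrals and the Volterra estimate are applied to the iterates and not only to $f$. Also, your operator bound should read $\|\Rop_mg\|_\infty\leq K_\lambda\|g\|_\infty$.
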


\begin{proof}
The diagonal part contributes the integrating factor $e^{-2Zb_\lambda(L)}$, which is in $\CM$ because $b_\lambda(L)=\tfrac1L+\tfrac2L\sum_{n\geq1}\lambda/(\lambda+(\pi n/L)^2)$ is a Bernstein function. The kernel of $\alpha\Rop_m$ factors as
\[
 \alpha\left(\frac{\sinh(\mu L)}{\sinh(\mu r)}\right)^{\!m}
 \left(\frac{\mu}{\sinh(\mu r)}\right)^{\!2},\qquad r\geq L,
\]
whose first factor is a product of $m$ Brownian exit-time transforms $W_1(\lambda;L,r)$, $m$ being an integer, and whose second is in $\CM$ by the product formula $r\mu/\sinh(r\mu)=\prod_{n\geq1}(1+r^2\lambda/(\pi^2n^2))^{-1}$. Now fix $\lambda>0$. Since $\sinh(\mu r)\geq\sinh(\mu L)$ for $r\geq L$,
\[
 \mu^2\sinh^m(\mu L)\int_L^\infty\frac{dr}{\sinh^{m+2}(\mu r)}\leq\mu^2\int_L^\infty\frac{dr}{\sinh^2(\mu r)}\leq\mu\bigl(\coth(\mu\epsilon)-1\bigr)=:K
\]
for $L\geq\epsilon$, so $|\Rop_m\Phi|\leq K\sup|\Phi|$, and the Duhamel map
\[
 \mathcal D\Phi=e^{-2Zb_\lambda}f(\cdot,0)+\int_0^Ze^{-2(Z-s)b_\lambda}\bigl(2\alpha\,\Rop_m\Phi+q\bigr)\,ds
\]
takes bounded functions on $[\epsilon,\infty)\times[0,Z_0]$ that are continuous in each variable to functions of the same kind. Its iterates $\Phi_0=0$, $\Phi_{n+1}=\mathcal D\Phi_n$ satisfy $|f-\Phi_n|\leq M(2\alpha KZ)^n/n!$ with $M=\sup|f|$, by induction from the Duhamel identity for $f$ and $0<e^{-2(Z-s)b_\lambda}\leq1$, so $\Phi_n\to f$ pointwise. By induction on $n$, every $\Phi_n(L,Z)$ lies in $\CM$, and so does $\Phi_{n+1}(L,Z)-e^{-2Zb_\lambda(L)}f(L,0)=\int_0^Ze^{-2(Z-s)b_\lambda(L)}\bigl(2\alpha\Rop_m\Phi_n+q\bigr)(L,s)\,ds$, because $\CM$ is closed under products, sums and integrals against nonnegative weights. Since $\CM$ is closed under pointwise limits, the lemma follows.

In the applications below, the data, the forcing and the solution are explicit functions that are bounded and continuous on $[\epsilon,\infty)\times[0,Z_0]$ for every $\epsilon,Z_0>0$, and the Duhamel identity is the integrated form of the differential equation derived for them. The Lean formalization checks these hypotheses for every flow of this appendix.
\end{proof}

At discount one put $\rho(r)=\sinh r\,p(r)$ with $p$ from \eqref{eq:pdef}, $c_r=\coth r$, $\Phi_r(L)=\sinh(r-L)/\sinh r$, and
\begin{equation}\label{eq:hmDm}
\begin{gathered}
 h_m(L,Z)=\int_L^\infty\rho(r)\Phi_r(L)^me^{-2Zc_r}\,dr,\\
 D_m(L,Z)=4\sinh L\int_L^\infty\rho(r)c_r(c_r^2-1)\Phi_r(L)^{m-1}e^{-2Zc_r}\,dr .
\end{gathered}
\end{equation}
Throughout this subsection a hat denotes $\mu^{-1}$ times evaluation at $(\mu L,\mu Z)$. The operators have explicit eigenfunctions. For $r>0$ and $j\geq0$ let $\varphi_r(L)=\bigl(\sinh(\mu(r-L))/\sinh(\mu r)\bigr)^j$ for $L\leq r$ and $\varphi_r(L)=0$ for $L>r$. Since $\coth x-\coth y=\sinh(y-x)/(\sinh x\sinh y)$, one has $\sinh^{-j-2}(\mu r')\varphi_r(r')=(\coth\mu r'-\coth\mu r)^j/\sinh^2(\mu r')$, whose antiderivative in $r'$ is $-(\coth\mu r'-\coth\mu r)^{j+1}/((j+1)\mu)$. Hence
\[
 (j+1)\Rop_j\varphi_r(L)=\mu\sinh^j(\mu L)\bigl(\coth\mu L-\coth\mu r\bigr)^{j+1}=\bigl(b_\lambda(L)-b_\lambda(r)\bigr)\varphi_r(L),
\]
that is, $\Kop_j\varphi_r=-b_\lambda(r)\varphi_r$. After the scaling, $\Phi_r^{m-1}$ is such an eigenfunction of $\Kop_{m-1}$, with eigenvalue $-b_\lambda(r)$, and conjugation by $\sinh(\mu L)$ turns $\Kop_{m-1}$ into $\Aop_m$, so differentiating under the integral gives the closed scalar equations
\begin{equation}\label{eq:hmDmflow}
 \partial_Z\widehat h_m=2\Kop_m\widehat h_m,
 \qquad
 \partial_Z\widehat D_m=2\Aop_m\widehat D_m .
\end{equation}
The $00110$ gap is $D_1+E$ in family $0$, where $E(L,Z)=\tfrac12\tanh L-p(L)e^{-2Z\coth L}$ is the paired-state gap, and $D_m$ in the other three families. For $m=2,3$ the initial values $D_m(L,0)$ are exactly the strictly positive COMB gaps at the Region III corners, so $D_m\in\CM$ by \eqref{eq:hmDmflow} and Lemma~\ref{lem:positiveflow}. For $m=1$ the initial datum is not a Region III gap; at $Z=0$,
\[
 D_1(L,0)=\tfrac17\bigl[4p(L)+2\sinh L\,\Lam(L)-2\tanh L\bigr],
\]
and both parts lie in $\CM$ after scaling: with the exit weights \eqref{eq:W},
\[
\begin{gathered}
 \frac{p(\mu L)}{\mu}=\frac12\int_0^LW_1(\lambda;s,L)^6\sech^2(\mu s)\,ds,\\
 \frac{\sinh(\mu L)\Lam(\mu L)-\tanh(\mu L)}{\mu}=\int_L^\infty W_1(\lambda;L,r)\,\sech^2(\mu r)\,dr,
\end{gathered}
\]
and $\sech(\mu r)=2W_1(\lambda;r,2r)$. The first identity holds because both sides vanish at $L=0$ and satisfy $\partial_Lf=\tfrac12\sech^2(\mu L)-6\mu\coth(\mu L)\,f$, an equation the closed form \eqref{eq:pdef} satisfies; the second because $\frac{d}{ds}\bigl[\Lam(s)-\sech s\bigr]=-\sech^2s/\sinh s$. Both right-hand sides are integrals of products of exit weights, hence lie in $\CM$, so $D_1(L,0)\in\CM$, and $D_1\in\CM$ by \eqref{eq:hmDmflow} and Lemma~\ref{lem:positiveflow}.

The gaps for the controls $00000$ and $00001$ are $B_m+h_m$ and $B_m+h_m-\partial_Z^2h_m$, where $B_m$ is the four-expert background at the corner. Their normalized transforms solve the forced equation
\begin{equation}\label{eq:forced}
 \partial_Z\widehat\gamma=2\Kop_m\widehat\gamma+F_m(\mu L),
 \qquad
 F_1=1,\quad F_2=C-S^2\Lam,\quad F_3=1+3S^2-3S^2C\,\Lam,
\end{equation}
with $S=\sinh L$, $C=\cosh L$, the exact forcing identity being $-2\Kop_m\big|_{\lambda=1}B_m=F_m$. Each $F_m$ has a nonnegative time inverse: $F_1=1$ is an atom at time zero, and for $m=2,3$
\begin{equation}\label{eq:Fmrec}
 F_m(\mu L)=m\int_L^\infty\left(\frac{\sinh(\mu L)}{\sinh(\mu r)}\right)^{\!m}
   \frac{\mu}{\sinh(\mu r)}\,F_{m-1}(\mu r)\,dr,
\end{equation}
a positive integral of products of elements of $\CM$. At $Z=0$ both gap families are certified Region III comparisons, so Lemma~\ref{lem:positiveflow} applies to \eqref{eq:forced} and puts both in $\CM$; the same argument, with the background difference $B_{\mathrm I}-B_{\mathrm{II}}=SC-S^3\Lam$ and its representation $\mu^{-1}(B_{\mathrm I}-B_{\mathrm{II}})(\mu L)=2\int_L^\infty(\sinh(\mu L)/\sinh(\mu r))^3dr$, covers the Region I corner. The control $00000$ is the statement $U_\tau\geq0$. The paired-state gap $E$ is treated the same way. All $64$ of these identities are checked against independently generated corner Hessians in \nolinkurl{check_positive_trace_gaps.py}.

The new ingredient is the following identity, which supplies three further positive traces at no extra certification cost.

\begin{proposition}\label{prop:Jm}
Let $H_m=\partial_Z^2h_m$ and $J_m=H_m-mD_m$ for $m=1,2,3$. Then
\begin{equation}\label{eq:Jflow}
 \partial_Z\widehat J_m=2\Kop_m\widehat J_m+2m\,\Rop_m\widehat D_m ,
\end{equation}
and the initial data are
\begin{equation}\label{eq:Jinit}
 J_1(L,0)=2\gen V_0(L),\qquad J_2(L,0)=2\gen T_1(L),\qquad J_3(L,0)=2\gen T_2(L),
\end{equation}
twice the unnormalized Region III generators of Table~\ref{tab:cells}. Consequently every $J_m$, and hence every $H_m=J_m+mD_m$, lies in $\CM$.
\end{proposition}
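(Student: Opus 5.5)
The plan is to derive \eqref{eq:Jflow} from the closed equations \eqref{eq:hmDmflow}, read off \eqref{eq:Jinit} from the Region III corner identification at $Z=0$, and then apply Lemma~\ref{lem:positiveflow} with forcing $q=2m\Rop_m\widehat D_m$.

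\emph{The flow equation.} By \eqref{eq:hmDmflow} we have $\partial_Z\widehat h_m=2\Kop_m\widehat h_m$. The operator $\Kop_m$ acts only in $L$, so it commutes with $\partial_Z$, and differentiating twice gives $\partial_Z\widehat H_m=2\Kop_m\widehat H_m$. The scaling convention must be tracked: a hat is $\mu^{-1}$ times evaluation at $(\mu L,\mu Z)$, so $\partial_Z^2$ of the unscaled function becomes $\mu^{-2}\partial_Z^2$ of the hatted one. Hence $\widehat H_m=\mu^{-2}\partial_Z^2\widehat h_m$ up to a power of $\mu$ that commutes with everything.

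For $D_m$ the equation is $\partial_Z\widehat D_m=2\Aop_m\widehat D_m$. Since $\Kop_m-\Aop_m=\Top_{m+1,m}-\Top_{m,m}=\Rop_m$, subtraction gives
\[
\partial_Z(\widehat H_m-m\widehat D_m)=2\Kop_m\widehat H_m-2m\Kop_m\widehat D_m+2m\Rop_m\widehat D_m ,
\]
which is \eqref{eq:Jflow}. I will check that the factor $m$ and the normalisation of $\widehat H_m$ make this identity exact and not merely true up to $\lambda$-powers. If a stray power of $\mu$ appears, I will instead differentiate the integral \eqref{eq:hmDm} directly: $\partial_Z^2 h_m$ brings down $4c_r^2$, and $mD_m$ is matched against it through $c_r^2=1+(c_r^2-1)$ and the eigenfunction relation $\Kop_j\varphi_r=-b_\lambda(r)\varphi_r$.

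\emph{The initial data.} At $Z=0$, $H_m(L,0)$ is $\partial_{z_4}^2$ of the correction at the corner, and $D_m(L,0)$ is the COMB corner gap. Families $0$, $1$ and $\mathrm{II}$ coincide with the Region III corners $\omega_0,\omega_1,\omega_2$ at $Z=0$ (and $u$ is $C^2$), so $J_m(L,0)$ is a fixed linear combination of Region III corner gaps of specific controls. I will identify these combinations with $2\gen V_0,2\gen T_1,2\gen T_2$ through the corner algebra of Appendix~\ref{app:cornerseries}. Concretely, I will compare the mode expansions: the lattice coefficients $a(k),b(k)$ of $J_m(\cdot,0)$, computed from \eqref{eq:dFseries} at $z_4=0$ and from the background, must equal twice those of the generators. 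This is an exact rational identity, to be added to the symbolic audit.

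This identification is the step I expect to be the main obstacle. It requires knowing which controls the generators $\gen V_0,\gen T_1,\gen T_2$ correspond to, and showing that $\partial_Z^2$ of the Regions I/II correction at $Z=0$ matches Region III second derivatives in $a_1$ under the relation $a_1=-\tfrac23 z_4$. I would first try the $\omega_0$ case by hand, using \eqref{eq:bcoefficients}, as was done for $G_\v(\omega_0)$.

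\emph{Conclusion.} Lemma~\ref{lem:small}, \eqref{eq:cell} and \eqref{eq:largescale} give that the generators have positive inverses, so the initial data of $\widehat J_m$ lie in $\CM$. The forcing $2m\Rop_m\widehat D_m$ lies in $\CM$ because $\widehat D_m\in\CM$, as already shown, and because the kernel of $\Rop_m$ is in $\CM$ by the factorization in the proof of Lemma~\ref{lem:positiveflow}. That lemma, with $\alpha=m+1$, then gives $\widehat J_m\in\CM$. Finally, $\widehat H_m=\widehat J_m+m\widehat D_m$ is a sum of elements of $\CM$ and so lies in $\CM$. The boundedness and continuity hypotheses of the lemma follow from the integral form \eqref{eq:hmDm}, exactly as for the other flows of this appendix.
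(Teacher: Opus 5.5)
Your proposal is correct and matches the paper's proof. Equation \eqref{eq:Jflow} comes from subtracting $m$ times the $\widehat D_m$ equation from the $\widehat H_m$ equation using $\Kop_m-\Aop_m=\Rop_m$; no stray power of $\mu$ can appear, because $\widehat H_m=\mu^{-2}\partial_Z^2\widehat h_m$ and the equation is linear and homogeneous. The initial data \eqref{eq:Jinit} are an identity among Region~III corner gaps at $\omega_{m-1}$, reached through the $C^2$ matching of families $0$, $1$, $\mathrm{II}$ at $Z=0$ and checked by the exact corner audit. Lemma~\ref{lem:positiveflow} with $\alpha=m+1$ and forcing $2m\Rop_m\widehat D_m\in\CM$ then concludes.

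When you carry out the identification, work from the control table rather than from $a_1=-\tfrac23z_4$ alone, and drop the claim that $D_m(L,0)$ is always the COMB corner gap:
\begin{itemize}
\item $\partial_Z$ at fixed $(z_1,z_2,z_3)$ is $\partial_{y_4}=-\tfrac23\partial_{a_1}+\tfrac13(\partial_{a_2}+\partial_{a_3}+\partial_{a_4})$. Up to sign this is the direction $q_\v$ of the control $\v=00001$, so $H_m=G_{f,00000}-G_{f,00001}$.
\item $D_m=G_{f,00110}$, except in family~$0$. There $D_1=G_{0,00110}-G_{0,00111}$, which is not the COMB gap, since the COMB gap vanishes at $\omega_0$.
\end{itemize}
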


\begin{proof}
From \eqref{eq:hmDmflow}, $\partial_Z\widehat H_m=2\Kop_m\widehat H_m$ and $\partial_Z\widehat D_m=2\Aop_m\widehat D_m$. Subtracting $m$ times the second from the first and writing $\Kop_m=-b_\lambda+(m+1)\Rop_m$, $\Aop_m=-b_\lambda+m\Rop_m$,
\[
 2\Kop_m\widehat H_m-2m\Aop_m\widehat D_m
 =2\Kop_m(\widehat H_m-m\widehat D_m)+2m\Rop_m\widehat D_m,
\]
which is \eqref{eq:Jflow}; this uses only the linearity of $b_\lambda$ and $\Rop_m$ and is verified as such in \nolinkurl{audit_regions12_completion.py}. The initial values \eqref{eq:Jinit} are identities among Region III corner gaps: $J_m(L,0)$ is a combination of gaps of family $0$, $1$ or $\mathrm{II}$ at $Z=0$, hence, by the remark after \eqref{eq:families}, the same combination of gaps at the corner $\omega_{m-1}$, where the corner decompositions of Appendix~\ref{app:region3} give \eqref{eq:Jinit}. The same script checks them again from the $Z=0$ moment expansions, separately in the four-expert background, the continuous spectral kernel and the moving-endpoint atom. Since $\Rop_m\widehat D_m$ is a nonnegative combination of elements of $\CM$, Lemma~\ref{lem:positiveflow} applies to \eqref{eq:Jflow}.
\end{proof}

\subsection{The derivative cascades and the scalar certificates}\label{app:cascades}

Nine corner gaps are not directly positive combinations of the traces above. Three of them, the critical gaps
\begin{equation}\label{eq:critical}
 C_1=G_{0,01001},\qquad C_2=G_{1,01110},\qquad C_3=G_{\mathrm{II},01110},
\end{equation}
vanish identically at $Z=0$, so it suffices to prove $\partial_Z\gamma\geq0$; six further gaps have nonnegative initial values, and the same device applies.

Write the continuous spectral kernel of such a gap as $S^mP(\alpha,c_r)$ with $S=\sinh L$, $\alpha=\coth L$ and $\deg_\alpha P\leq m$, and set $Q=-2c_rP$, so that $\partial_ZG=S^m\int_L^\infty\rho(r)Q\,e^{-2Zc_r}dr$ up to the endpoint atom. Define
\begin{equation}\label{eq:Nell}
 N_\ell(L,Z)=\frac{S^m}{\ell!}\int_L^\infty\rho(r)\,(\alpha-c_r)^\ell\,
   \partial_\alpha^\ell Q(\alpha,c_r)\,e^{-2Zc_r}\,dr,
 \qquad 0\leq\ell\leq m,
\end{equation}
equivalently $N_\ell=\sum_{j\geq\ell}\binom j\ell f_j$ where $f_j$ is the contribution of the $j$th coefficient in $Q=\sum_jq_j(c_r)(\alpha-c_r)^j$. Then $N_0=\partial_ZG$ and $N_{m+1}=0$.

\begin{proposition}\label{prop:cascade}
The scaled transforms satisfy the triangular system
\begin{equation}\label{eq:cascade}
 \partial_Z\widehat N_\ell=2\Top_{\ell+1,m}\widehat N_\ell
  +2(\ell+1)\,\Rop_m\widehat N_{\ell+1},\qquad N_{m+1}=0 .
\end{equation}
Here $\widehat N_\ell$ carries no $\mu^{-1}$ prefactor, differentiation of the scaled gap having cancelled it. For $m=1$ the moving-endpoint datum $T(L,Z)=\tfrac12\coth L\,p(L)e^{-2Z\coth L}$ enters $N_0$ with sign $+$ and $N_1$ with sign $-$, so that the second equation acquires the positive source $4\Rop_1\widehat T$, and $\partial_Z\widehat T=-2b_\lambda\widehat T$.
\end{proposition}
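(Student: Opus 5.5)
The plan is to verify \eqref{eq:cascade} first at discount one, $\lambda=1$, where $\mu=1$, $b_1(L)=\coth L=\alpha$ and $\Rop_m$ has the kernel $\sinh^mL/\sinh^{m+2}r$. I will then transfer it to general $\lambda$ by the scaling $(L,Z)\mapsto(\mu L,\mu Z)$. That transfer is the same bookkeeping as in \eqref{eq:hmDmflow}: $\Top_{\alpha,m}$ at parameter $\lambda$ is $\mu$ times $\Top_{\alpha,m}$ at parameter one, conjugated by the dilation. Since $N_\ell$ already carries a $Z$-derivative, the factor $\mu$ from $\partial_Z$ absorbs the $\mu^{-1}$ of the hat, which is the remark in the statement.

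The derivation proceeds through three identities. First, differentiating \eqref{eq:Nell} under the integral produces the factor $-2c_r$, which I split as $-2\alpha+2(\alpha-c_r)$. The term $-2\alpha$ is exactly $-2b_1(L)$ applied to $N_\ell$. Second, I need a formula for $\Rop_m$ acting on functions of the form $S^m\int_L^\infty\rho(r)g(\alpha,c_r)e^{-2Zc_r}\,dr$. Swapping the $L'$- and $r$-integrals by Fubini and substituting $\beta=\coth L'$, for which $d\beta=-\csch^2L'\,dL'$, gives
\[
 \Rop_m\Bigl[S^m\!\int_L^\infty\!\rho\,g\,e^{-2Zc_r}dr\Bigr]
 =S^m\!\int_L^\infty\!\rho(r)\Bigl(\int_{c_r}^{\alpha}g(\beta,c_r)\,d\beta\Bigr)e^{-2Zc_r}\,dr .
\]
So on such kernels $\Rop_m$ is integration in $\alpha$ starting from $c_r$. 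The factor $S^m$ cancels against $\sinh^mL/\sinh^{m}L'$; this is the same computation as the eigenfunction identity preceding \eqref{eq:hmDmflow}. Third, I write the kernel of $N_\ell$ as $n_\ell=\sum_j\binom j\ell q_j(c_r)(\alpha-c_r)^j$. The remaining claim then reduces to the polynomial identity
\[
 (\alpha-c)\,n_\ell=(\ell+1)\int_c^\alpha\bigl(n_\ell+n_{\ell+1}\bigr)d\beta .
\]
This holds coefficientwise because $(\ell+1)\bigl[\binom j\ell+\binom j{\ell+1}\bigr]/(j+1)=(\ell+1)\binom{j+1}{\ell+1}/(j+1)=\binom j\ell$. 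Combining $-2b\,N_\ell+2(\ell+1)\Rop_mN_\ell+2(\ell+1)\Rop_mN_{\ell+1}$ gives \eqref{eq:cascade}, with $\Top_{\ell+1,m}=-b+(\ell+1)\Rop_m$. Because $\deg_\alpha Q\leq m$, the triangular closure $N_{m+1}=0$ holds.

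For Fubini, I justify the interchanges using $\rho(r)=\O(r^2)$ near $0$ and $\O(e^{-r})$ at infinity, together with $c_r\geq1$, $e^{-2Zc_r}\leq1$, and the bound $\sinh^{m}L/\sinh^{m+2}L'\leq\csch^2L'$ used in Lemma~\ref{lem:positiveflow}.

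The main obstacle is the moving-endpoint atom for $m=1$. In the formulas for $\partial_ZG$, the lower limit $r=L$ contributes a point mass $T(L,Z)=\tfrac12\coth L\,p(L)e^{-2Z\coth L}$, which is not of the kernel form above. My plan is to treat the atom as a boundary term of the $\alpha$-integration. Its evolution $\partial_ZT=-2\coth L\,T$ follows by direct differentiation. To track signs, I will check that $T$ appears as $+T$ in $N_0$ and $-T$ in $N_1$, since $(\alpha-c_r)\partial_\alpha$ evaluated at $r=L$ has the opposite orientation. With these signs, the terms $-2bT$ cancel within each equation. The leftover of the $N_0$ equation is absorbed by $2\Rop_1(-T)$ from $N_1$ together with the boundary term of the $\alpha$-integral. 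The $N_1$ equation is left with $+4\Rop_1\widehat T$, coming from $2\cdot\Top_{2,1}$ acting on the atom minus its diagonal part.

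I would carry out this atom computation explicitly and cross-check it against the scripts' moment expansions. This is where an error is most likely to occur.
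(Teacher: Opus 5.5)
Your argument is correct and is essentially the paper's proof. The paper applies the eigenrelation $(j+1)\Rop_j\varphi_r=(b_\lambda(L)-b_\lambda(r))\varphi_r$, conjugated by $S^{m-j}$, to each $f_j$ separately and then sums with weights $\binom j\ell$ using the same Pascal identity; your formula for $\Rop_m$ as integration in $\alpha$ from $c_r$ is that eigenrelation in integrated form, applied to the whole kernel at once.

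The $m=1$ atom is pure algebra and needs neither a boundary term of the $\alpha$-integral nor an orientation argument; indeed the factor $\alpha-c_r$ vanishes at $r=L$. Write $\widehat N_0=\widehat N_0^{\mathrm c}+\widehat T$, with $\widehat N_\ell^{\mathrm c}$ the kernel parts. The identity $\Top_{1,1}\widehat T-\Rop_1\widehat T=-b_\lambda\widehat T=\tfrac12\partial_Z\widehat T$ shows that the $\ell=0$ equation holds exactly when $\widehat N_1=\widehat N_1^{\mathrm c}-\widehat T$. Then $\Top_{2,1}\widehat T=-b_\lambda\widehat T+2\Rop_1\widehat T$ leaves precisely the source $4\Rop_1\widehat T$ in the $\ell=1$ equation.
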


\begin{proof}
Writing the integrand of $f_j$ as $S^{m-j}q_j(c_r)\Phi_r(L)^j$ and conjugating the eigenrelation for $\Kop_j$ by $S^{m-j}$ gives $\partial_Z\widehat f_j=2\Top_{j+1,m}\widehat f_j$. Summing with weights $\binom j\ell$ and using the binomial identity $(j+1)\binom j\ell=(\ell+1)\bigl[\binom j\ell+\binom j{\ell+1}\bigr]$, which is Pascal's rule in the form $(\ell+1)\binom{j+1}{\ell+1}=(j+1)\binom j\ell$, gives
\[
 \partial_Z\widehat N_\ell
 =-2b_\lambda\widehat N_\ell+2(\ell+1)\Rop_m\bigl(\widehat N_\ell+\widehat N_{\ell+1}\bigr),
\]
which is \eqref{eq:cascade}. The polynomial degrees and the termination $N_{m+1}=0$ are checked in the data-generation scripts.
\end{proof}

Since every coefficient in \eqref{eq:cascade} is nonnegative, Lemma~\ref{lem:positiveflow} propagates positivity upward from $\ell=m$ to $\ell=0$, provided the $m+1$ initial profiles $N_\ell(\cdot,0)$ lie in $\CM$. Counting these for the three critical gaps and for the six further gaps
\[
 G_{0,00011},\ G_{1,00011},\ G_{1,00101},\
 G_{\mathrm I,00011},\ G_{\mathrm I,00101},\ G_{\mathrm{II},00011}
\]
gives $10$ and $20$ one-variable profiles respectively.

Each of the thirty derivative profiles has an exact expansion
\begin{equation}\label{eq:Nexp}
 N(L)=\sum_{k>0,\ k\equiv p\,(2)}\bigl[a(k)+Lb(k)\bigr]e^{-kL},
 \qquad p\equiv m-1\pmod 2,
\end{equation}
with $a$ odd and $b$ even rational functions whose poles lie at integers of the opposite parity and have order at most two. The derivation uses the trace series \eqref{eq:eseries} together with the identity
\begin{equation}\label{eq:tanhe}
 \tanh L\,e(L)=e'(L)+\frac{3I(L)}{\cosh L\,\sinh^5L},
\end{equation}
which follows from \eqref{eq:equad}, and it audits every exceptional Laurent mode; the apparent alternating contributions cancel identically. Setting
\begin{equation}\label{eq:AC}
 A(k)=k\,a(k)-b(k),\qquad C(k)=2k^2b(k),\qquad \xi=\frac{L^2}{4\tau},
\end{equation}
both $A$ and $C$ are even, and
\begin{equation}\label{eq:Ninverse}
 \Lap^{-1}\bigl[N(L\mu)\bigr](\tau)
 =\frac{L}{2\sqrt\pi\,\tau^{3/2}}\cdot
  \tfrac12\sum_{k\equiv p\,(2)}\bigl[A(k)+\xi\,C(k)\bigr]e^{-k^2\xi},
\end{equation}
which is \eqref{eq:profile}. So each profile is again a lattice Gaussian series, and the three ranges are treated as in Appendix~\ref{app:region3}, one derivative further because the polynomial parts now have degree four. For $0<\xi\leq1/100$, Poisson summation and the identities \eqref{eq:SaTazero}, \eqref{eq:Tzero} give a principal expression $M(\xi)=p_0\tfrac{\sqrt\pi}{4\sqrt\xi}+\pi^2\sum_ad_ae^{-a^2\xi}+\sum_aq_aQ_a$, the zero image \eqref{eq:poisson} now surviving because the polynomial parts no longer cancel it, with all $q_a\leq0$, and the elementary bounds $\sqrt\pi>3/2$, $\pi^2<10$, $Q_a\leq\sqrt\xi$ give $\sqrt\xi\,M(\xi)\geq\tfrac38p_0+\sum_a\min(d_a,0)+\tfrac1{100}\sum_aq_a>\tfrac1{20}$; the discarded images are bounded by $10^8\xi^{-17/2}e^{-9/(4\xi)}<10^{-40}/\sqrt\xi$, by the estimate of Lemma~\ref{lem:small} with $h\leq4$ and $\sigma_4=2025/16$. For $1/100\leq\xi\leq1$, the coefficient bound \eqref{eq:coeffbound} applied to $A$ and $C$, the tail bound \eqref{eq:tail}, and an order-six interval Taylor bound on rational cells, as in \eqref{eq:cell}, account for $251$ cells for nine of the ten critical profiles, the tenth being the endpoint datum treated below, and $513$ for the twenty further ones. For $\xi\geq1$, first-mode domination \eqref{eq:largescale} applies; for the ten critical profiles this uses in addition an exact Cauchy bound $|a_k|,|b_k|\leq C2^k$, obtained by evaluating each rational prefactor on $|q|=\tfrac12$ with $q=e^{-L}$ and using $|A_e|\leq107/54<2$, $|B_e|\leq22/81<1$, $|\arctan q|\leq\log3/2<1$ and $|2\operatorname{artanh}q|\leq\log3<2$, which in turn follow from $|A_n|\leq2n^2$ and $|B_n|\leq n^3/6$.

Two profiles need a separate treatment. The first is the endpoint datum $T$. Its zero Poisson image cancels identically and its partial fractions are empty, so $\Gamma$ is a sum over the nonzero images alone. Each such image is a positive multiple of an explicit rational function of $\xi$ and $B=\pi^2\ell^2/4$; substituting $B=\tfrac94+y$ and $\xi=1/(1+t)$ and clearing the positive denominator $3072\xi^7$ yields a polynomial in $(y,t)$ with strictly positive coefficients. Since $\pi^2/4>9/4$, this proves every nonzero image positive for $0<\xi\leq1$, hence $\Gamma>0$ there. The second is the last boundary datum. At the Region I corner with $Z=0$ the only new scalar initial gap is $C_*(L)=G_{\mathrm I,00011}(L,0)$, and the moment recursion gives the exact identity
\begin{equation}\label{eq:Cstar}
 C_*(L)=3\gen A_2(L)-3\gen B_2(L)+\sinh L\cosh L-\sinh^3L\;\Lam(L).
\end{equation}
The correction $\sinh L\cosh L-\sinh^3L\,\Lam(L)$ has a positive inverse, but the negative coefficient of $\gen B_2$ prevents concluding directly. Its exact even-lattice expansion has
\[
\begin{gathered}
 a(k)=\frac{(k^2-4)\bigl(5k^{10}-230k^8+3169k^6-18976k^4+39216k^2-48384\bigr)}
      {1792(k^2-1)^2(k^2-9)^2},\\[2pt]
 b(k)=-\frac{k(k^2-36)(k^2-16)(k^2-4)^2}{1792(k^2-1)(k^2-9)},
\end{gathered}
\]
and the certificate of Appendix~\ref{app:region3} applies verbatim: the small-scale principal lower bound is $\tfrac{3347091}{2867200}\sqrt\xi$, the compact range takes $45$ cells, and for $\xi\geq1$ the constant zero mode $2/3$ dominates a remainder below $7.7167\times10^{-8}$.

\subsection{All sixty-four corner controls}\label{app:sixtyfour}

It remains to express every corner gap as a nonnegative combination of certified objects. Besides the identically zero gaps and the gaps already treated, the decompositions are
\begin{center}
\begin{tabular}{llll}\toprule
Family & Control & Positive decomposition\\\midrule
$0$ & $00110$ & $D_1+E$\\
$0$ & $00010$ & $G_{0,00011}+\tfrac12H_1+\tfrac12D_1$\\
$0$ & $01000$ & $C_1+\tfrac12J_1$\\
$1$ & $00010$ & $G_{1,00011}+D_2+\tfrac12H_2$\\
$1$ & $00100$ & $G_{1,00101}+\tfrac12H_2$\\
$1$ & $01111$ & $C_2+\tfrac12J_2$\\
$\mathrm I$ & $00010$ & $G_{\mathrm I,00011}+\tfrac32D_3+\tfrac12H_3$\\
$\mathrm I$ & $00100$ & $G_{\mathrm I,00101}+D_3+\tfrac12J_3$\\
$\mathrm{II}$ & $00010$ & $G_{\mathrm{II},00011}+\tfrac12D_3+\tfrac12H_3$\\
$\mathrm{II}$ & $01111$ & $C_3+\tfrac12J_3$\\\bottomrule
\end{tabular}
\end{center}
together with $H_m=G_{f,00000}-G_{f,00001}$ and $D_m=G_{f,00110}$ (family $0$ excepted, where $D_1=G_{0,00110}-G_{0,00111}$). The script \nolinkurl{audit_regions12_completion.py} verifies $106$ exact identities covering all $64$ physical corner entries, checking the four-expert background, the continuous spectral kernel and the moving-endpoint atom separately, and confirming that every combination has nonnegative coefficients.

\begin{proof}[Proof of Theorem~\ref{thm:region12ham}]
Proposition~\ref{prop:reduction} reduces the claim to the four corner families; the decompositions above, Proposition~\ref{prop:Jm} and the cascades of Proposition~\ref{prop:cascade} with the certified initial profiles of Appendix~\ref{app:cascades} put every corner gap in $\CM$, and Lemma~\ref{lem:cmsign} turns this into the inequality for $U$. For the COMB statement we use its representative $01010$. Its gaps in the corner families $0$, $1$, $\mathrm I$ and $\mathrm{II}$ are $0$, $D_2$, $D_3$ and $D_3$, its replacement at the nonphysical corner is a vanishing gap, and its Green coefficient is zero, so Proposition~\ref{prop:reduction} gives, in both regions, the exact transform
\[
 \widehat\gamma_C=W_1(b)\bigl[W_0(c)\widehat D_2(L,Z)+W_1(c)\widehat D_3(L,Z)\bigr].
\]
If $b=z_2=0$ then $W_1(b)=0$, so the COMB gap has vanishing transform and, being continuous in $\tau$, vanishes. Let $b>0$. By \eqref{eq:hmDmflow} and the second statement of Lemma~\ref{lem:positiveflow}, $\widehat D_m(L,Z)-e^{-2Zb_\lambda(L)}\widehat D_m(L,0)\in\CM$ for $m=2,3$, and every weight lies in $\CM$. Hence, if $c<L$,
\[
 \widehat\gamma_C-\widehat\kappa\,\widehat D_2(L,0)\in\CM,
 \qquad
 \widehat\kappa=W_1(b)\,W_0(c)\,e^{-2Zb_\lambda(L)},
\]
and if $c=L$ the same holds with $W_1(c)=1$ in place of $W_0(c)$ and $\widehat D_3$ in place of $\widehat D_2$. In either case $\widehat\kappa$ is a product of elements of $\CM$, hence the transform of a nonnegative measure, and it decays no faster than $e^{-A\sqrt\lambda}$: for $0<s\leq L$ we have $\sinh(s\mu)/\sinh(L\mu)\geq\tfrac12e^{-(L-s)\mu}$ as soon as $e^{-2s\mu}\leq\tfrac12$, which applies to $W_1(b)$ with $s=b$ and to $W_0(c)$ with $s=L-c$, while $\coth y\leq1+1/y$ gives $e^{-2Zb_\lambda(L)}\geq e^{-2Z/L}e^{-2Z\mu}$. The initial values $\widehat D_2(L,0)$ and $\widehat D_3(L,0)$ are the transformed COMB gaps at the Region III corners $\omega_1$ and $\omega_2$, that is, the generators $\gen D_1$ and $\gen B_2$, whose inverses $\Gamma(\xi)/\sqrt{\pi\tau}$ are continuous and strictly positive for $\tau>0$ by Appendix~\ref{app:region3}. The second statement of Lemma~\ref{lem:cmsign}, applied to the COMB gap, which is continuous in $\tau$ with the growth that Lemma~\ref{lem:convergence} provides, therefore gives strict positivity. Since $z_2=(y_1+y_3)/2$, the equality set is $y_1=y_3=0$.
\end{proof}

\section{Independent numerical checks}\label{app:numerics}

The checks in this appendix are not part of the proof. They test the formula and its final claims by methods that bypass the reductions of Appendices~\ref{app:region3} and~\ref{app:region12}, and they tie the construction back to the stationary solution. All floating-point computations use either $40$-digit arithmetic (mpmath) or double precision with the analytic term-by-term derivatives; the scripts are in the \nolinkurl{derivation/} directory of the supplement~\cite{CD26Supp}.

The expansions of Section~\ref{sec:derivation} agree with what they expand: the Region III series with the closed form \eqref{eq:F3} to $30$ digits at six configurations, the Regions I/II series with direct quadrature of \eqref{eq:F12} to $55$ digits at five configurations, the four-expert expansion of Lemma~\ref{lem:F4expansion} with \eqref{eq:F4} to $19$ digits at a point close to the four-coordinate diagonal, where the convergence is slowest, and the trace series as recorded in Remark~\ref{rem:A3}. The function $U$ behaves as it should. Central differences in $40$-digit arithmetic give $|U_\tau-\tfrac12D^2_{\v_*}U|\lesssim10^{-21}$, the noise floor of the difference quotient, at seven points covering the three regions and $\tau\in\{0.4,1,2.5\}$; $U(\tau,x)-\max_ix_i$ falls below $10^{-14}$ by $\tau=10^{-2}$ at points with $x_1>x_2$ and decays like $\sqrt\tau$ where the maximum is attained twice; $U(\ell^2\tau,\ell x)=\ell U(\tau,x)$ holds to machine precision; and at twelve random points of the interface $a_1=0$ the two regional expressions agree in value, in $U_\tau$ and in all sixteen curvatures $D^2_\v U$ to $9\times10^{-16}$.

The most stringent global check is the transform identity \eqref{eq:transform} and its differentiated form \eqref{eq:curvtransform}, tested by numerical quadrature in $\tau$ against an independent evaluation of the stationary solution $u$ from the companion paper. At $\lambda=1$ and three points spanning Regions I, II and III,
\[
 \int_0^\infty e^{-\tau}U(\tau,x)\,d\tau=u(x)
 \quad\text{and}\quad
 \int_0^\infty e^{-\tau}\bigl(D^2_{\v_*}U-D^2_{\v_C}U\bigr)(\tau,x)\,d\tau
 =\bigl(D^2_{\v_*}u-D^2_{\v_C}u\bigr)(x)
\]
agree to $12$ significant digits, the accuracy of the quadrature.

The next check does not use the derivation of the formula at all. An independent monotone finite-difference solution of \eqref{eq:forward} on the gap lattice, of the kind analyzed in~\cite{CDM26}, run with the full Bellman maximum over all $\v\in\{0,1\}^5$, was compared with the formula at $\tau=1$ at seventeen points. The scheme converges at second order in the mesh; after one Richardson step from $h=\tfrac18$ and $h=\tfrac1{16}$ the largest discrepancy is $6\times10^{-6}$, consistent with the residual discretization error. At the symmetric point, Richardson gives $0.6921242$ against the exact value $45\pi^{3/2}/(256\sqrt2)=0.6921215$ of \eqref{eq:constant}. The discrete solve also confirms that fixing the policy $\v_*$ and taking the full Bellman maximum give the same value to $10^{-13}$.

Finally, we evaluated the five-expert Hamiltonian inequality directly from the series, which bypasses the reduction of Appendices~\ref{app:region3} and~\ref{app:region12}. By the exact scaling of Theorem~\ref{thm:main}(iii) it suffices to scan at $\tau=1$, since $D^2_\v U(\tau,x)=\tau^{-1/2}D^2_\v U(1,x/\sqrt\tau)$. The script \nolinkurl{scan.py} draws $1500$ points of the ordered sector with coordinates uniform in $[0,1.6]$ (\texttt{numpy} default generator, seed $20260914$), of which $30\%$ are given one forced collision $x_i=x_{i+1}$ and $20\%$ are placed on $S=\{x_1=x_2,\ x_3=x_4\}$, and evaluates $U_\tau$ and all sixteen curvatures $D^2_\v U$ in double precision from the analytic term-by-term derivatives. It discards a point when $\vartheta>10$, with $\vartheta$ as in \eqref{eq:theta} below, or when dropping the last eight modes moves a curvature by more than $10^{-10}$ relative. This discarded $34$ points and left $1466$: $611$ in Region~I, $643$ in Region~II and $212$ in Region~III, points on the interface $z_3=0$ being counted in Region~II, of which $737$ are strictly ordered, $446$ have one forced collision and $283$ lie on $S$. Over these points $\max|U_\tau-\tfrac12D^2_{\v_*}U|=5.0\times10^{-14}$, and $\v_*$ attains the maximum at all $1466$ of them. The largest apparent excess, $\max_\v(D^2_\v U-D^2_{\v_*}U)=5.6\times10^{-11}$, occurs at $x=(0.1793,0.1302,0.1302,0.0443,0)$ for $\v=(1,1,0,0,0)$, where $x_2=x_3$ and the two curvatures are equal exactly, by the symmetry explained below. Among the pairs of a point and a control that no such symmetry ties, the largest excess is $1.9\times10^{-11}$, for $(1,1,0,0,0)$ at $x=(0.2853,0.0214,0.0107,0,0)$, where $a_4$ is small and the Region III series needs many modes; recomputing that point in $40$-digit arithmetic gives curvatures that agree to $14$ digits, while the stationary gap there is $3.7\times10^{-4}$.

The ties are consistent with Section~\ref{sec:transfer}. On $S$, where Proposition~\ref{prop:combset} makes the COMB gap vanish identically, the computed $|\Delta|$ still reaches $5.4\times10^{-10}$, so a tie tolerance must sit above that figure. We use $10^{-9}$ relative, and the last column records the count at $10^{-12}$ for comparison.
\begin{center}
\begin{tabular}{lrrrrrr}\toprule
control & total & I & II & III & at $10^{-12}$ & symmetric\\\midrule
$(1,0,1,0,0)=\v_*$ & $1466$ & $611$ & $643$ & $212$ & $1466$ & $1466$\\
$(1,0,0,1,1)$ & $894$ & $611$ & $283$ & $0$ & $890$ & $394$\\
$(1,0,0,1,0)$ & $856$ & $1$ & $643$ & $212$ & $855$ & $397$\\
$(1,1,0,0,0)$ & $391$ & $177$ & $170$ & $44$ & $295$ & $113$\\
$(1,0,1,0,1)=\v_C$ & $284$ & $1$ & $283$ & $0$ & $279$ & $283$\\
$(1,0,0,0,1)$ & $212$ & $0$ & $0$ & $212$ & $212$ & $0$\\
\bottomrule
\end{tabular}
\end{center}
The three ties of Corollary~\ref{cor:ties} appear with exactly the predicted region coverage: $(1,0,0,1,1)$ at all $611$ points of Region~I, $(1,0,0,1,0)$ at all $643+212$ points of Regions~II and~III, and $(1,0,0,0,1)$ at the $212$ points of Region~III and nowhere else. Many ties are forced by a symmetry of the state: if a permutation $P$ of the coordinates fixes $x$, then $\nabla^2U(\tau,x)=P^T\nabla^2U(\tau,x)P$ by the permutation invariance of $U$, so $D^2_{P\v_*}U=D^2_{\v_*}U$ at $x$, for the finite-horizon and for the stationary solution alike; the last column counts these ties. Of the $284$ COMB ties, $283$ are the points of $S$, which lie on the interface $z_3=0$ and are counted in Region~II; there the exchanges of $x_1$ with $x_2$ and of $x_3$ with $x_4$ carry $\v_*$ to the complement of $(1,0,0,1,1)$, to $(1,0,0,1,0)$ and to $\v_C$, which accounts for the $283$ ties of $(1,0,0,1,1)$ in Region~II, and the exchange of $x_2$ with $x_3$ carries $\v_*$ to $(1,1,0,0,0)$, which accounts for its $113$ ties at points with $x_2=x_3$. The remaining $278$ ties of $(1,1,0,0,0)$ are numerical: at each of these points the stationary gap $D^2_{\v_*}u-D^2_{(1,1,0,0,0)}u$, evaluated with \nolinkurl{geom.py}, lies between $2.9\times10^{-5}$ and $0.30$, so by \eqref{eq:curvtransform} the finite-horizon gap is not identically zero in $\tau$, and the tie is a gap below the tolerance at $\tau=1$ (Remark~\ref{rem:gapsize}). The one tie of $(1,0,0,1,0)$ in Region~I is at $x=(1.5496,1.5496,0.0904,0.0782,0)$, where $z_3=-0.009$ places the point just inside Region~I, next to the interface on which Corollary~\ref{cor:ties} makes this control tie $\v_*$; the gap there is $2\times10^{-10}$. Off $S$ the COMB gap was resolved and positive at all $1183$ points, and negative at none; its one tie off $S$ at the tolerance $10^{-9}$ is resolved positive at $10^{-12}$, as Theorem~\ref{thm:comb} requires.

The series converge for every $a_4>0$, respectively $z_1>0$, but the number of modes needed grows as the smallest relevant length scale shrinks. In Region III one needs $m\lesssim\sqrt\tau/a_4$ modes, since $\Hk_0(X_{m,\epsilon},\tau)$ decays like $e^{-m^2a_4^2/\tau}$, and in Regions I and II one needs $n\lesssim\sqrt\tau/z_1$. In Regions I and II the expansion of $e^{-2z_4\coth t}$ in powers of $z_4$ also has intermediate terms of size $e^{\vartheta}$,
\begin{equation}\label{eq:theta}
 \vartheta=\frac{4z_4e^{-2z_1}}{1-e^{-2z_1}},
\end{equation}
so that double precision loses about $\vartheta/\ln10$ digits on approach to the face $x_1+x_2=x_3+x_4$, where $z_1=0$. Near that face one should either work in extended precision or evaluate the integral in \eqref{eq:F12} directly under the inverse transform, and on $\coll$ itself the quadrature \eqref{eq:face} replaces the series.